\documentclass[11pt]{article}

\usepackage{amsmath}
\usepackage{amsfonts}
\usepackage{color}
\usepackage{amssymb}
\usepackage{graphicx}
\usepackage{enumerate}
\usepackage[all]{xy}
\usepackage{multirow}
\usepackage{array}
\usepackage{tabularx}

\def\P{{\rm \textup{P}}}
\def\S{{\rm \textup{S}}}
\def\L{{\rm \textup{L}}}
\def\G{{\rm \textup{G}}}
\def\A{{\rm \textup{A}}}
\def\M{{\rm \textup{M}}}
\def\U{{\rm \textup{U}}}
\def\Sp{{\rm \textup{Sp}}}
\def\Sz{{\rm \textup{Sz}}}
\def\Re{{\rm \textup{Re}}}
\def\HS{{\rm \textup{HS}}}
\def\Co{{\rm \textup{Co}}}
\def\GF{{\rm \textup{GF}}}
\def\Aut{{\rm \textup{Aut}}}

\def\mod{{\rm mod\,}} 

\def\Out{{\rm Out}}

 \allowdisplaybreaks

\begin{document}

\newtheorem{problem}{Problem}
\newtheorem{theorem}{Theorem}[section]
\newtheorem{corollary}[theorem]{Corollary}
\newtheorem{definition}[theorem]{Definition}
\newtheorem{conjecture}[theorem]{Conjecture}
\newtheorem{question}[theorem]{Question}
\newtheorem{lemma}[theorem]{Lemma}
\newtheorem{proposition}[theorem]{Proposition}
\newtheorem{quest}[theorem]{Question}
\newtheorem{example}[theorem]{Example}
\newenvironment{proof}{\noindent {\bf
Proof.}}{\rule{2mm}{2mm}\par\medskip}
\newenvironment{proofof}{\noindent {\bf
Proof of the Theorem 6.1.}}{\rule{2mm}{2mm}\par\medskip}
\newcommand{\remark}{\medskip\par\noindent {\bf Remark.~~}}
\newcommand{\pp}{{\it p.}}
\newcommand{\de}{\em}

\title{{Block-Transitive $5$-$(v,k,2)$ Designs with $k$ divides $v$}
\thanks{This research was supported  by National Natural Science Foundation of China (No.12401020) and Hunan Provincial Natural Science Foundation (No. 2026JJ60324).
E-mail addresses: huhuijiao1@163.com (H.J. Hu), huangzhengmath@163.com (Z. Huang);
shenshouqiang@126.com (S.Q. Shen, corresponding author).}}

\author{Huijiao Hu$^{1}$, Zheng Huang$^{1}$, Shouqiang Shen$^{2}$  \\[2ex]
{\small $1.$ Department of Mathematics and Statistics, Hunan University of Science and Technology}\\
{\small Xiangtan, Hunan, 411201, P.R. China. }\\
{\small $2.$School of Applied Science, Beijing Information Science and Technology University}\\
{\small Beijing, 100192, P.R. China. }\\}

\maketitle

\vspace{-0.5cm}

\begin{abstract}
The classification of block-transitive 5-designs remains an open problem.
The additional parameter condition that $k$ divides $v$ is called the Camina-Gagen condition.
In this paper, we investigate block-transitive simple $5$-$(v,k,2)$ designs satisfying the Camina-Gagen condition.
Using the classification of finite $2$-homogeneous permutation groups, we consider the affine and almost simple cases separately.
We prove that no such design admits a block-transitive automorphism group of affine type.
For the almost simple case, up to isomorphism, there are exactly two possibilities: a $5$-$(12,6,2)$ design admitting $\P\G\L(2,11)$ as a block-transitive automorphism group and a $5$-$(24,8,2)$ design admitting $\P\G\L(2,23)$ as a block-transitive automorphism group.
\end{abstract}

{{\bf Key words:} Block-transitive design; $5$-design; $2$-homogeneous permutation group.}

\vspace{0.5cm}

{{\bf 2020 Mathematics Subject Classification} 05B05, 20B25}

\section{Introduction}

A $t$-$(v,k,\lambda)$ {\it design} is a finite incidence structure $\mathcal{D}=(\mathcal{P},\mathcal{B})$ with parameters $(v,b,k,\lambda)$, where $\mathcal{P}$ denotes a set of $v$ points, $\mathcal{B}$ denotes a set of $b$ blocks such that every $B\in\mathcal{B}$ is a $k$-subset of $\mathcal{P}$ and any $t$ points are incident with exactly $\lambda$ blocks.
A {\it flag} of $\mathcal{D}$ is a point-block pair $(x,B)$ where $x$ is a point and $B$ is a block with $x\in B$.
Throughout this paper, all designs are assumed to be simple and non-trivial, namely, without repeated blocks and satisfying $5<k<v$.
An {\it automorphism} of $\mathcal{D}$ is a permutation of the points which preserves the blocks.
The set of all automorphisms of $\mathcal{D}$ under the composition of permutations forms a group, denoted by $\Aut(\mathcal{D})$.
If $G\leq\Aut(\mathcal{D})$, then $G$ is called an {\it automorphism group} of $\mathcal{D}$.
$\mathcal{D}$ is {\it flag-transitive} (resp. {\it block-transitive}, {\it point $s$-transitive}, {\it point $s$-homogeneous}) if $G$ acts transitively on the flags (resp. transitively on the blocks, $s$-transitively on points, $s$-homogeneously on points), and we also say that $G$ is {\it flag-transitive} (resp. {\it block-transitive}, {\it point $s$-transitive}, {\it point $s$-homogeneous}).

Designs admitting block-transitive or flag-transitive automorphism groups have been studied extensively.
In 1976, Clapham \cite{Clapham} characterized Steiner triple systems admitting a block-transitive automorphism group
whose action on points is not $2$-transitive.
In 1989, Delandtsheer and Doyen \cite{Delandtsheer} established
an upper bound, in terms of the block size, on the number of points of designs admitting a block-transitive, point-imprimitive automorphism group.
In 1996, Camina \cite{Camina3} reduced the problem of line-transitive point-primitive linear spaces to two cases: those with an elementary abelian group as the socle and those with a non-abelian simple group as the socle.
Subsequently, in 2001, Camina and Praeger \cite{Camina4} generalized this result to line-transitive point-quasiprimitive linear spaces.
In the same year, Li and Liu \cite{Lihl2} presented a reduction result for block-transitive Steiner 2-designs whose automorphism groups are solvable.
In the early 2000s, the Li-Liu team carried out extensive research on block-transitive 2-designs; for details, see the Chinese survey by Tian et al.~\cite{Tian}.

The study of block-transitive designs for $t>2$ has also yielded a substantial body of research.
In 1993, Cameron and Praeger \cite{Cameron0} studied point-imprimitive block-transitive designs.
They also proved that a non-complete block-transitive $t$-design must satisfy $t\leq7$ \cite{Cameron1}.
In 2010, Huber \cite{Huber1} proved that no non-trivial Steiner $5$-design admits a block-transitive automorphism group
of affine type, and obtained the corresponding result for Steiner $4$-designs, except possibly when $G\leq\mathrm{A}\Gamma\mathrm{L}(1,q)$.
Between 2010 and 2020, Liu's research group sequentially discussed several special types of block-transitive designs, obtaining numerous significant results (see survey paper \cite{Tian}).
In 2023, Gan and Liu \cite{Gan} proved that a block-transitive, point-primitive automorphism group of a non-trivial Steiner $3$-design is of affine or almost simple type.
In the same year, Lan, Liu, and Yin \cite{Lan} classified block-transitive Steiner $3$-designs admitting an almost simple automorphism group with alternating socle.
In 2024, they further investigated block-transitive Steiner $3$-designs admitting an almost simple automorphism group
with socle a simple exceptional group of Lie type, reducing the possible socles to Suzuki groups and $\G_2(q)$ and obtaining further restrictions \cite{Lan1}.

In 1984, Camina and Gagen \cite{Camina1} investigated block-transitive Steiner $2$-designs satisfying $k$ divides $v$ and established the well-known Camina--Gagen Theorem.
The Camina--Gagen Theorem shows that the divisibility condition $k$ divides $v$ imposes strong restrictions on block-transitive designs.
Recently, by imposing the Camina-Gagen condition, we have studied several families of block-transitive designs \cite{Huang,Huang1}.
In 2026, Wang, Huang, Zhang, and Liu \cite{Wang2026} proved that there are no non-trivial block-transitive $6$-$(v,k,2)$ designs satisfying $k\mid v$.
In this paper, we investigate non-trivial block-transitive $5$-$(v,k,2)$ designs satisfying $k$ divides $v$.
Our main result is as follows.

\begin{theorem}\label{th1.1}
Let $\mathcal{D}=(\mathcal{P},\mathcal{B})$ be a non-trivial simple $5$-$(v,k,2)$ design satisfying $k$ divides $v$, and let $G\leq\Aut(\mathcal{D})$ act block-transitively on $\mathcal{B}$.
Then, up to isomorphism, $\mathcal{D}$ is one of the following designs:
\begin{enumerate}[(1)]
\item a $5$-$(12,6,2)$ design admitting $\P\G\L(2,11)$ as a block-transitive automorphism group;
\item a $5$-$(24,8,2)$ design admitting $\P\G\L(2,23)$ as a block-transitive automorphism group.
\end{enumerate}

Conversely, each of these two designs admits the indicated block-transitive automorphism group.
\end{theorem}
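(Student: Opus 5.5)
We would begin with the standard reduction: a block-transitive group of a $t$-design with $t\ge 4$ is point $\lfloor t/2\rfloor$-homogeneous. This is the Ray-Chaudhuri--Wilson / Cameron--Praeger argument: $b\ge\binom{v}{\lfloor t/2\rfloor}$, and $G$ has at least as many orbits on $\lfloor t/2\rfloor$-subsets as on blocks. So $G$ is $2$-homogeneous on $\mathcal P$, and the classification of finite $2$-homogeneous groups splits the problem into the affine case and the almost simple case.

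Next we would record the arithmetic that drives everything. With $\lambda=2$ and $t=5$,
\[
b=\frac{2\binom{v}{5}}{\binom{k}{5}},\qquad \lambda_i=\frac{2\binom{v-i}{5-i}}{\binom{k-i}{5-i}}\in\mathbb Z\quad(0\le i\le 5).
\]
Block-transitivity gives $b\mid |G|$. Writing $v=mk$ with $m\ge 2$, the condition $\lambda_1=r\in\mathbb Z$ becomes $(k-1)(k-2)(k-3)(k-4)\mid 2(v-1)(v-2)(v-3)(v-4)$. Reducing $v\equiv 0 \pmod k$ should force strong constraints; for instance $\gcd(k-1,v-1)$ is controlled since $v-1\equiv m-1 \pmod{k-1}$. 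Together with the Fisher-type bound $b\ge\binom{v}{2}$, this leaves a short list of small relationships between $v$ and $k$. We would also use the standard bound $|G_B|$ versus $|G|/b$, and the fact that $b\le\binom{v}{k}$ fails to be useful, so the key inequality is $|G|\ge b$.

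For the affine case, $v=p^d$ forces $k=p^e$. A stabilizer argument, as in Huber's treatment of affine Steiner $5$-designs, should show that $|G|\le v\,|\Gamma\mathrm L(d,p)|$ is too small relative to $b\approx 2\binom{v}{5}/\binom{k}{5}$, except for small cases. Those cases we would treat using the list of $2$-homogeneous affine groups: $\mathrm{A}\Gamma\mathrm L(1,q)$, $\mathrm{SL}(n,q)$-containing groups, $\mathrm{Sp}$, $G_2$, and the sporadic small ones. For $\mathrm{A}\Gamma\mathrm L(1,q)$, we have $|G|\le q(q-1)\log q$, which is far below $b$ once $k<v$. We expect the real work to be divisibility checks for the remaining small $p^d$, which will all be eliminated.

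For the almost simple case, we would run through the socles $A_v$, $\mathrm{PSL}(n,q)$, $\mathrm{PSU}(3,q)$, $\mathrm{Sz}(q)$, ${}^2G_2(q)$, $\mathrm{Sp}(2n,2)$, $\mathrm{PSL}(2,11)$ on $11$, $M_{11},M_{12},M_{22},M_{23},M_{24}$, $\mathrm{HS}$, $\mathrm{Co}_3$, and $\mathrm{PSL}(2,8)$ on $28$, using $b\mid|G|$ together with the integrality of the $\lambda_i$. Alternating socle gives trivial designs. The rank-one and Lie-type families have $|G|$ polynomial of low degree in $v$ (about $v^3$ for $\mathrm{PSL}(2,q)$, and similarly for the unitary, Suzuki and Ree groups), while $b\ge\binom{v}{2}$ and usually much larger. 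This should reduce everything to $\mathrm{PSL}(2,q)$ with small $q$, $\mathrm{PSL}(n,q)$ with small parameters, and the sporadic degrees.

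The main obstacle is the $\mathrm{PSL}(2,q)\le G\le\mathrm{P\Gamma L}(2,q)$ family with $v=q+1$, since $|G|$ is close to $v^3$. Here $b\mid |G|$ gives $\binom{q+1}{5}\mid\tfrac12\binom{k}{5}\,(q+1)q(q-1)f$, which bounds $k$ and $q$ to a finite list. We would then finish by computer (Magma) orbit computations on $k$-subsets, checking which orbits form $5$-$(v,k,2)$ designs. We expect exactly the two survivors: the orbit of a $6$-subset under $\mathrm{PGL}(2,11)$ on $12$ points, and of an $8$-subset under $\mathrm{PGL}(2,23)$ on $24$ points, both unique up to isomorphism. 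The converse then consists of exhibiting those base blocks and verifying them.
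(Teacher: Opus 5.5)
Your skeleton (2-homogeneity via Cameron--Praeger, the affine/almost simple split, arithmetic, and a computer check at the end) matches the paper's. However, the steps that carry the weight rest on order or divisibility comparisons that fail. (A minor slip: Block's lemma says the number of $G$-orbits on blocks is \emph{at least} the number on $2$-subsets, not the reverse.)

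In the affine case, $|G|\le v\,|\mathrm{GL}(d,p)|$ is roughly $v^{d+1}$, which dwarfs $b<v^5$ once $d\ge5$. The arithmetic also never runs out: $(v,k)=(2^d,8)$ with $d\equiv2,3\pmod 4$ satisfies every integrality condition on $\lambda_0,\dots,\lambda_4$. Socle $\mathrm{PSL}(n,q)$ with $n\ge4$ and socle $\mathrm{Sp}(2d,2)$ have the same order growth, so nothing in your plan reduces these families to ``small parameters''.

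The missing idea is local. A subgroup $L\le G$ fixing a $5$-set $S$ setwise permutes the two blocks through $S$, so each block is a union of orbits of a subgroup of index at most $2$ in $L$. Choose $S$ inside a small subspace $E$ whose pointwise stabilizer has all orbits off $E$ longer than $2k$; this is possible because $k<\sqrt{2v}+4$. Then both blocks are trapped inside $E$. Block-transitivity plus preservation of affine or projective dimension now contradicts the existence of $5$-sets with larger span. The paper runs this argument with $\mathrm{SL}$, $\mathrm{Sp}$ and $G_2(q)$ in the affine case, with elations and homologies for $\mathrm{PSL}(d,q)$, $d\ge3$, and with $\mathrm{O}^-(4,2)\times\mathrm{O}^{\pm}(2d-4,2)$ inside $\mathrm{Sp}(2d,2)$.

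For $\mathrm{PSL}(2,q)\le G\le\mathrm{P\Gamma L}(2,q)$, your claim that $b\mid|G|$ yields a finite list is false, because the degrees balance. With $q=p^e$, $b\mid|\mathrm{P\Gamma L}(2,q)|$ amounts to $2(q-2)(q-3)\mid e\,k(k-1)(k-2)(k-3)(k-4)$. Since $k\mid q+1$ forces $\gcd(k,(q-2)(q-3))\mid 12$, this only yields $(q-2)(q-3)\mid 12e(k-1)(k-2)(k-3)(k-4)$. Both sides have order $q^2$ because $k<\sqrt{2q}+5$, so no bound on $q$ follows.

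The paper obtains finiteness only as follows:
\begin{itemize}
\item It identifies $H=\mathrm{PSL}(2,q)_B$ through Dickson's list and writes $k$ as a sum of $H$-orbit lengths on $\mathbb{P}^1(q)$.
\item It combines this with $k\mid 12|H|$ (a small multiple of $|H|$ when $G>\mathrm{PSL}(2,q)$) and solves the resulting Diophantine equations case by case. Some cases, such as $H\cong C_c$ with $k=2c$, need a genuine extra argument.
\item When $G>\mathrm{PSL}(2,q)$, it bounds how many $\mathrm{PSL}(2,q)$-orbits $\mathcal B$ splits into. It does this by letting field automorphisms fix a subline pointwise and act on the two blocks through five of its points.
\item Even then, $(q,k)=(23,8)$ and $(47,12)$ with $G=\mathrm{PSL}(2,q)$ survive all arithmetic. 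They are excluded only by checking that no candidate block orbit is a $4$-design.
\end{itemize}

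Similarly, $\mathrm{M}_{11}$ on $12$ points, $\mathrm{M}_{12}$ and $\mathrm{M}_{24}$ pass $b\mid|G|$ (e.g.\ $264\mid|\mathrm{M}_{11}|$ and $1518\mid|\mathrm{M}_{24}|$). There you need their orbit lengths on $k$-subsets, not just divisibility.
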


Note that the two designs appearing in Theorem \ref{th1.1} are not new constructions.
Wei and Li \cite{WeiLi} proved the uniqueness, up to isomorphism, of the block-transitive $5$-$(12,6,2)$ design arising from the natural action of $\P\G\L(2,11)$.
Betten, Laue and Wassermann \cite[p.\~85]{Betten} observed that the union of two Witt $5$-$(24,8,1)$ designs interchanged by $\P\G\L(2,23)$ is a $5$-$(24,8,2)$ design whose block set is a single $\P\G\L(2,23)$-orbit.
The main purpose of the present paper is to prove that no other non-trivial simple block-transitive $5$-$(v,k,2)$ designs satisfying $k$ divides $v$ can occur.

\section{Preliminaries}

In this section, we collect some preliminary results on designs and permutation groups that will be used in the proof of Theorem~\ref{th1.1}.
We first recall some well-known facts about designs.
\begin{lemma}{\rm\cite{Beth}}\label{lem2.1}
Let $\mathcal{D}=(\mathcal{P},\mathcal{B})$ be a non-trivial $t$-$(v,k,\lambda)$ design.
Then the following holds:
\begin{enumerate}[{\rm(i)}]

\item $bk=vr$;

\item ${\binom{v}{t} }\lambda=b{\binom{k}{t}}$;

\item $r(k-1)=\lambda_{2}(v-1)$ for $t\geq 2$.

\item $\displaystyle
\lambda_s
=\lambda\frac{\binom{v-s}{t-s}}{\binom{k-s}{t-s}}
$ for $0\leq s\leq t$.
\end{enumerate}
\end{lemma}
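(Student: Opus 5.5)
The statement to prove is Lemma~\ref{lem2.1}, the standard counting identities for a $t$-$(v,k,\lambda)$ design. All four parts follow from double counting, so the plan is to establish (iv) first and then read off (i)--(iii) as special cases.

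For (iv), fix an $s$-subset $S\subseteq\mathcal{P}$ with $0\leq s\leq t$. We count pairs $(T,B)$ with $S\subseteq T\subseteq B$, $|T|=t$ and $B\in\mathcal{B}$, in two ways.
\begin{itemize}
\item Choosing $T$ first: there are $\binom{v-s}{t-s}$ choices of $T\supseteq S$, and each $T$ lies in exactly $\lambda$ blocks.
\item Choosing $B$ first: let $\lambda_s(S)$ be the number of blocks containing $S$. For each such block there are $\binom{k-s}{t-s}$ choices of $T$ with $S\subseteq T\subseteq B$.
\end{itemize}
Equating the two counts gives
\[
\lambda_s(S)\binom{k-s}{t-s}=\lambda\binom{v-s}{t-s}.
\]
Hence $\lambda_s(S)$ does not depend on the choice of $S$; denote it $\lambda_s$. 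Since $k<v$ for a non-trivial design, $\binom{k-s}{t-s}\neq 0$, and dividing yields (iv). In particular, $\lambda_0=b$ and $\lambda_1=r$, the number of blocks through a point.

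Part (ii) is (iv) with $s=0$. Part (i) comes from counting flags: each of the $v$ points lies in $r$ blocks, and each of the $b$ blocks contains $k$ points, so $bk=vr$.

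For (iii), fix a point $x$ and count pairs $(y,B)$ with $y\neq x$ and $\{x,y\}\subseteq B$. Choosing $B$ first gives $r(k-1)$, since $x$ lies in $r$ blocks and each has $k-1$ other points. Choosing $y$ first gives $(v-1)\lambda_2$, which uses that $\lambda_2$ is constant; this holds by (iv) because $t\geq 2$. Equating gives $r(k-1)=\lambda_2(v-1)$.

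No step is a real obstacle. The only point needing care is showing that $\lambda_s$ is independent of $S$, and the double count in (iv) settles that before (i)--(iii) rely on it.
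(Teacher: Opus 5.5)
Your double-counting proof is correct and is the standard argument from the cited reference; the paper gives no proof of this lemma and only cites it. One small fix: $\binom{k-s}{t-s}\neq 0$ holds because $k\geq t$ (non-triviality means $t<k<v$, and $\lambda\geq 1$ already forces $k\geq t$), not because $k<v$.
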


\begin{lemma}{\rm\cite{Beth}}\label{lem2.2}
Let $\mathcal{D}=(\mathcal{P},\mathcal{B})$ be a non-trivial $2$-$(v,k,\lambda)$ design with $b$ blocks.
Then $b\geq v$.
\end{lemma}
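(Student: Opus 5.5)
The statement immediately above is Lemma~\ref{lem2.2}, Fisher's inequality for $2$-designs. It is quoted from \cite{Beth}, but here is how I would prove it directly. The plan is the standard linear-algebra argument via the incidence matrix. Let $N$ be the $v\times b$ point-block incidence matrix of $\mathcal{D}$. The key step is to compute the Gram matrix $NN^{T}$: its $(x,y)$ entry counts the blocks containing both $x$ and $y$. By Lemma~\ref{lem2.1} (with $t=2$), this equals $r$ when $x=y$ and $\lambda_2=\lambda$ when $x\neq y$. Hence
\[
NN^{T}=(r-\lambda)I_v+\lambda J_v ,
\]
where $J_v$ is the all-ones matrix.

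Next I would show that this matrix is nonsingular. Its eigenvalues are $r-\lambda$, with multiplicity $v-1$ on vectors orthogonal to the all-ones vector, and $r-\lambda+\lambda v = r+\lambda(v-1)$ on the all-ones vector. The determinant is therefore $(r+\lambda(v-1))(r-\lambda)^{v-1}$. The first factor is clearly positive. For the second, Lemma~\ref{lem2.1}(iii) gives $r(k-1)=\lambda(v-1)$, so $r-\lambda=\lambda(v-k)/(k-1)$, which is positive because non-triviality gives $k<v$ (and $k\ge 2$).

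Then $\operatorname{rank}(NN^{T})=v$. Since $\operatorname{rank}(NN^{T})\le\operatorname{rank}(N)\le\min(v,b)$, we get $b\ge v$.

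The only delicate point is the positivity of $r-\lambda$, and that is exactly where non-triviality ($k<v$) enters; the rest is routine.
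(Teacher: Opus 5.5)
Your proof is correct: it is the standard incidence-matrix proof of Fisher's inequality, with $NN^{T}=(r-\lambda)I_v+\lambda J_v$ nonsingular because $r-\lambda=\lambda(v-k)/(k-1)>0$ when $k<v$. The paper gives no proof of its own and simply cites \cite{Beth}, where the inequality is proved by essentially this rank argument.
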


As a consequence of Lemmas~\ref{lem2.1} and~\ref{lem2.2}, we obtain the following upper bound for $k$.

\begin{corollary}\label{cor2.3}
Let $\mathcal{D}=(\mathcal{P},\mathcal{B})$ be a non-trivial $5$-$(v,k,2)$ design.
Then
\begin{equation*}
(k-3)(k-4)\leq 2(v-4),
\end{equation*}

and hence
\begin{equation*}
k\leq\left\lfloor\frac{7+\sqrt{8v-31}}{2}\right\rfloor<\sqrt{2v}+4.
\end{equation*}
\end{corollary}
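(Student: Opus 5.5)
The plan is to apply Lemma~\ref{lem2.2} to a derived design rather than to $\mathcal{D}$ itself, because the contraction at three points gives exactly the factor $(k-3)(k-4)$. Fix three distinct points $x,y,z$ of $\mathcal{P}$ and take the blocks through all three of them, each with $x,y,z$ removed. These blocks then lie on the remaining $v-3$ points and have size $k-3$. Any two further points lie, together with $x,y,z$, in exactly $\lambda=2$ blocks, so this derived structure is a $2$-$(v-3,k-3,2)$ design. Its number of blocks is $\lambda_3$, which Lemma~\ref{lem2.1}(iv) gives with $t=5$, $s=3$ as
\begin{equation*}
\lambda_3=2\,\frac{\binom{v-3}{2}}{\binom{k-3}{2}}=\frac{2(v-3)(v-4)}{(k-3)(k-4)}.
\end{equation*}

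I will check that the derived design is non-trivial. We have $5<k<v$, so $2<k-3<v-3$. It is simple as a $2$-design, and even without simplicity Fisher's inequality still applies because $k-3<v-3$. Lemma~\ref{lem2.2} therefore gives $\lambda_3\ge v-3$. Dividing by $v-3>0$ yields $2(v-4)\ge (k-3)(k-4)$, which is the first inequality. This step is the main point of the proof; the only care needed is to choose the $3$-point derived design so that the parameters match and the non-triviality hypotheses of Lemma~\ref{lem2.2} hold.

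For the bound on $k$, I expand the inequality to $k^2-7k+12-2v+8\le 0$, that is, $k^2-7k+(20-2v)\le0$. Solving the quadratic gives $k\le \frac{7+\sqrt{49-4(20-2v)}}{2}=\frac{7+\sqrt{8v-31}}{2}$, and since $k$ is an integer we may take the floor. Finally, $\sqrt{8v-31}<\sqrt{8v}=2\sqrt{2v}$, so $\frac{7+\sqrt{8v-31}}{2}<\frac{7}{2}+\sqrt{2v}<\sqrt{2v}+4$. This part is routine.
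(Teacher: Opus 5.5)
Your proposal is correct and follows essentially the same route as the paper: apply Fisher's inequality (Lemma~\ref{lem2.2}) to the derived $2$-$(v-3,k-3,2)$ design at a $3$-subset to get $v-3\le\lambda_3$, then solve the quadratic $k^2-7k+(20-2v)\le 0$. Your check that the derived design satisfies $2<k-3<v-3$, and your bound $\sqrt{8v-31}<2\sqrt{2v}$ for the final inequality, spell out details the paper leaves implicit.
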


\begin{proof}
Fix a $3$-subset $S$ of $\mathcal P$.
The derived design is a $2$-$(v-3,k-3,2)$ design.
By Fisher's inequality and Lemma~\ref{lem2.1},
\[
v-3\leq\lambda_3
=\frac{2(v-3)(v-4)}{(k-3)(k-4)}.
\]
Hence $(k-3)(k-4)\leq2(v-4)$.
Solving this inequality and using $k\in\mathbb Z$ gives
\[
k\leq\left\lfloor\frac{7+\sqrt{8v-31}}{2}\right\rfloor
<\sqrt{2v}+4.
\]
\end{proof}

\begin{theorem}{\rm\cite{Cameron1}}\label{th2.4}
Let $\mathcal{D}=(\mathcal{P},\mathcal{B})$ be a $t$-design with $t\geq2$.
Then the following statements hold:
\begin{enumerate}[{\rm(i)}]
\item If $G\leq\Aut(\mathcal{D})$ acts block-transitively on $\mathcal{D}$, then $G$ is point $\left\lfloor\frac{t}{2}\right\rfloor$-homogeneous on $\mathcal{P}$.
\item If $G\leq\Aut(\mathcal{D})$ acts flag-transitively on $\mathcal{D}$, then $G$ is point $\left\lfloor\frac{t+1}{2}\right\rfloor$-homogeneous on $\mathcal{P}$.
\end{enumerate}
\end{theorem}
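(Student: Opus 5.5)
The plan is to deduce both parts from a single rank argument combined with Block's lemma. Block's lemma says the following: suppose $G$ acts on finite sets $X$ and $Y$, and $M$ is an $X\times Y$ matrix over $\mathbb{Q}$ with $M_{x^g,y^g}=M_{x,y}$ for all $g\in G$. If $M$ has full row rank $|X|$, then the number of $G$-orbits on $X$ is at most the number of $G$-orbits on $Y$. I would recall or prove this first. The proof uses the orbit-sum vectors: the orbit sums of the columns span the $G$-fixed part of the column space, which is all of $\mathbb{Q}^X$ by full row rank. Hence the $G$-fixed subspace of $\mathbb{Q}^X$, whose dimension equals the number of row orbits, has dimension at most the number of column orbits.

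For (i), put $s=\lfloor t/2\rfloor$. Let $X$ be the set of $s$-subsets of $\mathcal P$ and $Y=\mathcal B$, and let $W_s$ be the inclusion matrix with $(S,B)$-entry $1$ if $S\subseteq B$ and $0$ otherwise. This matrix is clearly $G$-invariant. Since $2s\le t$, the Ray-Chaudhuri--Wilson argument gives full row rank. The idea is that, because the design is also a $2s$-design, $W_sW_s^{T}=\sum_{i=0}^{s}\lambda_{2s-i}A_i$, where $A_i$ is the adjacency matrix of "meeting in $i$ points" in the Johnson scheme $J(v,s)$. This matrix is positive definite, since it can be written as a positive combination of the matrices $W_{s,j}W_{s,j}^{T}$ with $j\le s$, and nonsingularity holds provided $s\le k\le v-s$. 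That condition follows from non-triviality. Block transitivity gives one orbit on $Y$, so by Block's lemma there is one orbit on $s$-subsets. This is exactly $s$-homogeneity.

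For (ii), put $s=\lfloor (t+1)/2\rfloor$. Let $X$ be the set of $s$-subsets as before, and now let $Y$ be the set of flags. Define $M_{S,(x,B)}=1$ if $x\in S\subseteq B$, and $0$ otherwise. This matrix is $G$-invariant. To prove full row rank, suppose $y\in\mathbb{Q}^X$ satisfies $y^{T}M=0$. Then for each point $x$ and each block $B\ni x$ we have $\sum_{x\in S\subseteq B}y_S=0$. Fix $x$. The derived structure at $x$ is a $(t-1)$-$(v-1,k-1,\lambda)$ design, and the sets $S\setminus\{x\}$ are its $(s-1)$-subsets. Since $2(s-1)\le t-1$, part (i)'s rank statement applied to the derived design shows that its $(s-1)$-inclusion matrix has full row rank. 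Hence $y_S=0$ for every $S\ni x$. Letting $x$ vary gives $y=0$. Flag-transitivity means one orbit on $Y$, and Block's lemma then gives $s$-homogeneity.

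The main obstacle is the full-rank statement for inclusion matrices, namely the positive-definiteness of $W_sW_s^{T}$ in the Johnson scheme. I would take the Ray-Chaudhuri--Wilson/Wilson rank theorem as known, in the form: for a $2s$-design with $s\le k\le v-s$, $\operatorname{rank} W_s=\binom{v}{s}$. I would then only check that its hypotheses hold for the design and for its derived designs. This requires $k-1\le (v-1)-(s-1)$, which follows from $k\le v-s$ and hence from non-triviality.
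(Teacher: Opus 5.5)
The paper gives no proof of this statement; it only cites Cameron and Praeger. Your proof is correct. For part (i) it is the standard argument: Block's lemma applied to the inclusion matrix of $\lfloor t/2\rfloor$-subsets versus blocks, with full row rank supplied by the Ray-Chaudhuri--Wilson theorem.

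For part (ii) you take a genuinely different route. The usual derivation reduces (ii) to (i):
\begin{itemize}
\item a flag-transitive $G$ is point-transitive, and $G_x$ is block-transitive on the derived $(t-1)$-design at $x$;
\item so by (i), $G_x$ is $\lfloor (t-1)/2\rfloor$-homogeneous on $\mathcal{P}\setminus\{x\}$;
\item given two $s$-subsets with $s=\lfloor (t+1)/2\rfloor$, one maps a point of the first onto a point $x$ of the second, then matches the remaining $s-1$ points inside $G_x$.
\end{itemize}
You instead apply Block's lemma once, to the $s$-subset versus flag matrix. You get its full row rank by noting that a left null vector, restricted to the $s$-subsets through a fixed point $x$, becomes a left null vector of the $(s-1)$-inclusion matrix of the derived design at $x$. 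The linear-algebra input is the same in both routes: Ray-Chaudhuri--Wilson for the derived designs, using $2(s-1)\le t-1$. So neither route is more general. The standard one is shorter, because (ii) becomes a two-line group-theoretic consequence of (i). Yours treats both parts uniformly as instances of one principle: a $G$-invariant matrix of full row rank whose columns form a single $G$-orbit forces a single orbit on rows.

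One claim needs correcting. The condition $k\le v-s$ does not follow from non-triviality as this paper defines it ($5<k<v$ for $t=5$). It also cannot simply be dropped, because the statement is false when $k=v-1$. Take the complete $5$-$(7,6,2)$ design on a $7$-set:
\begin{itemize}
\item it is block-transitive under $C_7$, which has three orbits on $2$-subsets;
\item it is flag-transitive under $\mathrm{AGL}(1,7)$, which is regular on the $42$ flags but has order $42$, so it cannot be transitive on the $35$ three-subsets.
\end{itemize}
So the statement needs a hypothesis such as $k\le v-\lfloor t/2\rfloor$, which the common convention $t<k<v-t$ implies. With it, your argument goes through unchanged. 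In the paper's applications this is harmless, since $k\mid v$ and $k<v$ give $k\le v/2$.
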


Next, we give some lemmas which will be required in Section 3 for the proof of our main result.
Let $q=p^e$ be a prime power and $H$ a subgroup of $\P\S\L(2,q)$.
Furthermore, let $N_l$ denote the number of the orbits of length $l$ and let $n=(2,q-1)$.
For the action of subgroups of $\P\S\L(2,q)$ on the projective line, we refer to \cite{Cameron2}.
By Dickson's classification of the subgroups of $\P\S\L(2,q)$ {\rm\cite{Dickson}}, every non-trivial subgroup
of $\P\S\L(2,q)$ is of one of the types considered below.

\begin{lemma}{\rm\cite{Cameron2}}\label{lem2.5}
Let $H$ be the cyclic group of order $c$ with $c\mid\frac{q\pm1}{n}$.
Then we have
\begin{enumerate}[{\rm(i)}]

\item if $c\mid\frac{q+1}{n}$, then $N_c=(q+1)/c$;

\item if $c\mid\frac{q-1}{n}$, then $N_1=2$ and $N_c=(q-1)/c$.
\end{enumerate}
\end{lemma}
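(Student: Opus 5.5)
The plan is to work directly with the action of $H=\langle h\rangle$ on the projective line $\mathrm{PG}(1,q)=\mathbb{F}_q\cup\{\infty\}$. Here $h$ is the image of a semisimple element of $\mathrm{SL}(2,q)$. First I reduce to the case where $h$ is diagonalizable over $\mathbb{F}_q$ (split torus, $c\mid\frac{q-1}{n}$) or over $\mathbb{F}_{q^2}$ but not $\mathbb{F}_q$ (nonsplit torus, $c\mid\frac{q+1}{n}$). Dickson's classification \cite{Dickson} guarantees that every cyclic subgroup of order dividing $\frac{q\pm1}{n}$ lies in such a torus. Since $\gcd(q-1,q+1)\le 2$ and $c$ divides $\frac{q\pm1}{n}$, for $c>2$ (or $q$ even) exactly one of the two cases occurs. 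The small values $c\le 2$ need a quick separate check, using that the order-$2$ element is diagonalizable over $\mathbb{F}_q$ or not according as $\frac{q-1}{n}$ is even or not.

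For the key computation, fix a point $x\in\mathrm{PG}(1,q)$. An element $h^i$ fixes $x$ exactly when $x$ corresponds to an eigenvector line of a lift of $h^i$ in $\mathrm{SL}(2,q)$. A nonidentity element of $H$ is a nonscalar semisimple matrix, so it has exactly two eigenvector lines over $\overline{\mathbb{F}}_q$. These lines are the same for all nontrivial powers, since the powers share the eigenbasis and are nonscalar because their image in $\mathrm{PSL}$ is nontrivial.

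In the split case the two common eigenlines are $\mathbb{F}_q$-rational, giving two points fixed by all of $H$. Every other point has trivial stabilizer in $H$, so it lies in a regular orbit of length $c$. Counting the remaining $q+1-2=q-1$ points gives $N_1=2$ and $N_c=(q-1)/c$.

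In the nonsplit case the eigenlines are defined over $\mathbb{F}_{q^2}\setminus\mathbb{F}_q$, so no point of $\mathrm{PG}(1,q)$ is fixed by any nontrivial element of $H$. Hence $H$ acts semiregularly, and $N_c=(q+1)/c$.

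The main obstacle I expect is the bookkeeping step. It must justify that the nontrivial powers of $h$ remain nonscalar in $\mathrm{SL}(2,q)$, which is exactly where dividing by $n=(2,q-1)$ is used: the lift of $H$ has order $nc$, and only the central elements $\pm I$ are scalar. It must also handle the overlap of the two conditions when $c\le2$. Alternatively, since the statement is quoted from \cite{Cameron2}, it suffices to cite that source.
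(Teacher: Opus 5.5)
Your proposal is correct, but it is not how the paper handles this statement. The paper gives no proof: it quotes the orbit data from \cite{Cameron2}, where the orbits of all subgroups of $\P\S\L(2,q)$ on $\mathbb P^1(q)$ are tabulated.

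Your argument is self-contained. You lift a generator to a semisimple element of $\mathrm{SL}(2,q)$ with eigenvalues $\mu^{\pm1}$. Every nontrivial power of $H$ is still nonscalar in the lift, so it has the same two eigenlines. These lines are $\mathbb F_q$-rational exactly when $\mu\in\mathbb F_q$. In the split case this gives two fixed points with all remaining orbits regular; in the nonsplit case $H$ acts semiregularly.

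Your route explains why the orbit counts are what they are, using only linear algebra. The citation buys brevity and keeps this lemma uniform with Lemmas~\ref{lem2.6}--\ref{lem2.13}, which come from the same source.

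Two small simplifications:
\begin{enumerate}
\item You do not need Dickson's theorem for the torus reduction. An element of order $c$ prime to $p$ lies in a split or nonsplit torus according as $\mu\in\mathbb F_q$ or $\mu^{q+1}=1$. Its order in $\P\S\L(2,q)$ then divides $(q-1)/n$ or $(q+1)/n$, respectively.
\item Your worry about $c=2$ is unnecessary. For odd $q$, the integers $(q-1)/2$ and $(q+1)/2$ are consecutive and hence coprime. So the only overlap of the two hypotheses is $c=1$, which must be excluded anyway, since (ii) is false for the trivial group.
\end{enumerate}
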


\begin{lemma}{\rm\cite{Cameron2}}\label{lem2.6}
Let $H$ be the dihedral group of order $2c$ with $c\mid\frac{q\pm1}{n}$.
Then
\begin{enumerate}[{\rm(i)}]

\item for $q\equiv1~(\mod4)$, we have
\begin{enumerate}[{\rm(a)}]
\item if $c\mid\frac{q+1}{2}$, then $N_c=2$ and $N_{2c}=(q+1-2c)/(2c)$,

\item if $c\mid\frac{q-1}{2}$, then $N_2=1$, $N_c=2$ and $N_{2c}=(q-1-2c)/(2c)$, unless $c=2$, in which case $N_2=3$ and $N_{4}=(q-5)/4$;
\end{enumerate}

\item for $q\equiv3~(\mod4)$, we have
\begin{enumerate}[{\rm(a)}]
\item if $c\mid\frac{q+1}{2}$, then $N_{2c}=(q+1)/(2c)$,

\item if $c\mid\frac{q-1}{2}$, then $N_2=1$ and $N_{2c}=(q-1)/(2c)$;
\end{enumerate}

\item for $q\equiv0~(\mod2)$, we have
\begin{enumerate}[{\rm(a)}]

\item if $c\mid q+1$, then $N_c=1$ and $N_{2c}=(q+1-c)/(2c)$,

\item if $c\mid q-1$, then $N_2=1$, $N_c=1$ and $N_{2c}=(q-1-c)/(2c)$.
\end{enumerate}
\end{enumerate}
\end{lemma}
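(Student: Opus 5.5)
The statement is Lemma~\ref{lem2.6}: the orbit structure of a dihedral subgroup $H=\langle a,\tau\rangle$ of order $2c$ in $\P\S\L(2,q)$ on the projective line $\mathrm{PG}(1,q)$, with $a$ of order $c$ and $\tau$ an involution inverting $a$. I would derive it from Lemma~\ref{lem2.5} applied to $C=\langle a\rangle$. Each $H$-orbit is a union of at most two $C$-orbits, and $\tau$ permutes the $C$-orbits. So an $H$-orbit has length $|O|$ or $2|O|$, where $O$ is a $C$-orbit, according as $\tau$ fixes $O$ setwise or not. For $c>2$, $C$ is characteristic in $H$, so every point stabiliser $H_x$ is either trivial, or of order $2$ generated by an involution $\tau a^i$, or meets $C$ nontrivially. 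Thus the whole computation reduces to counting the fixed points of the involutions of $H$ and the fixed points of $C$.

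The key facts I would use are the following.
\begin{enumerate}[(1)]
\item An involution of $\P\S\L(2,q)$ with $q$ odd has two fixed points if $q\equiv1\pmod4$ and none if $q\equiv3\pmod4$: it lies in a cyclic subgroup of order $(q-1)/2$ or $(q+1)/2$ according as $2$ divides which one, and we then apply Lemma~\ref{lem2.5}.
\item For $q$ even, an involution is unipotent with exactly one fixed point.
\item When $c\mid (q-1)/n$, the group $C$ has two fixed points $\{\infty,0\}$, and $\tau$, which normalises $C$ but does not centralise it when $c>2$, swaps them. This gives the orbit $N_2=1$.
\item When $c\mid (q+1)/n$, the group $C$ has no fixed points.
\end{enumerate}

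With these facts, the count goes as follows. The number of points whose stabiliser has order $2$ comes from the involutions outside $C$. For $c$ odd these form one conjugacy class of size $c$. For $c$ even they form two classes of size $c/2$. In every case there are $c$ non-central involutions. Each fixes $f\in\{0,1,2\}$ points, and the resulting points lie in orbits of length $c$. Counting incidences, the number of such orbits is $(c\cdot f)/c=f$, up to checking that the stabilisers are exactly of order $2$. This gives $N_c=2$ for $q\equiv1\pmod4$, $N_c=0$ for $q\equiv3\pmod4$, and $N_c=1$ for $q$ even. Every remaining point outside the fixed points of $C$ has trivial stabiliser, so it lies in an orbit of length $2c$. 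Subtracting, $N_{2c}=(q+1-2c)/(2c)$, $(q-1-2c)/(2c)$, $(q+1)/(2c)$, $(q-1)/(2c)$, $(q+1-c)/(2c)$ or $(q-1-c)/(2c)$ in the respective cases, matching the statement.

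The main obstacle is the bookkeeping at the boundary. First, when $c=2$ the group $H$ is a Klein four-group with three involutions, all playing symmetric roles. Then the fixed pair of $a$ forms one orbit of length $2$, and each of the other two involutions contributes one more orbit of length $2$. This gives $N_2=3$ and $N_4=(q-5)/4$, and it must be checked separately, since $C$ is no longer characteristic. Second, I must verify that a point fixed by a non-central involution is not also fixed by another element. Otherwise its stabiliser would contain $C$ or a larger subgroup, contradicting the structure of the point stabilisers in $\P\S\L(2,q)$, which are Borel subgroups of order $q(q-1)/n$ and in which involutions are unique up to the torus.

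Finally, I must confirm which involution fixed-point count applies in which case. In case (i)(a), $2\mid (q-1)/2$, so involutions are split and have two fixed points, giving $N_c=2$. In case (ii), involutions are non-split and fix nothing. I would also check the parity condition $c\mid (q\pm1)/n$, which is needed to guarantee that the involutions $\tau a^i$ lie in $\P\S\L(2,q)$, so that fact (1) applies to them.
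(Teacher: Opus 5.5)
Your proposal is correct. The paper gives no proof of Lemma~\ref{lem2.6}; it quotes the result from \cite{Cameron2}. Your argument is therefore a self-contained derivation, not a variant of one in the text.

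You reduce everything to two inputs. The first is Lemma~\ref{lem2.5} applied to the index-$2$ cyclic subgroup $C=\langle a\rangle$. The second is the number $f$ of fixed points of an involution of $\P\S\L(2,q)$: $f=2$, $0$ or $1$ according as $q\equiv1\pmod4$, $q\equiv3\pmod4$, or $q$ is even. The count is then uniform:
\begin{enumerate}[(a)]
\item the $c$ involutions outside $C$ each fix $f$ points, and each such point has stabiliser of order exactly $2$, so they give $cf/c=f$ orbits of length $c$;
\item $\mathrm{Fix}(C)$, when non-empty, is a single orbit of length $2$;
\item every other point lies in a regular orbit.
\end{enumerate}
This reproduces all six cases and the $c=2$ exception in (i)(b). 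The citation buys brevity. Your route buys a check that uses only a lemma already stated in the paper and two standard facts about involutions.

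A few steps should be tightened.
\begin{enumerate}[(1)]
\item The trichotomy for $H_x$ (trivial, equal to $\langle\tau a^i\rangle$, or meeting $C$ nontrivially) holds for any subgroup of index $2$. The remark that $C$ is characteristic plays no role.
\item For $c>2$, the cleanest proof that a point $x$ fixed by $\tau a^i$ has $H_x=\langle\tau a^i\rangle$ is direct. Any other non-identity element of $H_x$ yields a non-identity element of $C$ fixing $x$, since $(\tau a^i)(\tau a^j)=a^{j-i}$. By Lemma~\ref{lem2.5}, a non-identity element of $C$ fixes only points of $\mathrm{Fix}(C)$. But $\tau a^i$ fixes no point of $\mathrm{Fix}(C)$: when $\mathrm{Fix}(C)$ is non-empty this is your step (3), and otherwise it holds vacuously. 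This is more precise than your appeal to Borel subgroups.
\item For $c=2$ you assert without proof that the three fixed pairs are disjoint and that each is an orbit of length $2$. This follows because a point stabiliser $E_q\rtimes C_{(q-1)/2}$ of $\P\S\L(2,q)$, $q$ odd, has cyclic Sylow $2$-subgroups and so contains no Klein four-group. The same fact shows that $\tau$ swaps, rather than fixes, the two fixed points of $a$.
\item The closing remark about parity guaranteeing $\tau a^i\in\P\S\L(2,q)$ is unnecessary, since $H\leq\P\S\L(2,q)$ is part of the hypothesis.
\end{enumerate}
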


\begin{lemma}{\rm\cite{Cameron2}}\label{lem2.7}
Let $H$ be the elementary abelian group of order $\bar{q}\mid q$.
Then we have $N_1=1$ and $N_{\bar{q}}=q/\bar{q}$.
\end{lemma}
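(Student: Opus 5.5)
The plan is to reduce to the translation subgroup of the Borel subgroup and then read off the orbits directly. Write the projective line as $\GF(q)\cup\{\infty\}$, with $\P\S\L(2,q)$ acting by fractional linear maps $x\mapsto (ax+b)/(cx+d)$ of determinant $1$ (taken modulo scalars). Since $\bar q\mid q=p^e$, the group $H$ is a $p$-group. Throughout, take $H$ non-trivial, as in the statement; if $\bar q=1$ every point is a fixed point and $N_1=q+1$.

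First I will place $H$ inside a fixed Sylow $p$-subgroup. The order of $\P\S\L(2,q)$ is $q(q^2-1)/n$, so its Sylow $p$-subgroups have order $q$. One of them is
\[
U=\{\,x\mapsto x+b \;:\; b\in\GF(q)\,\},
\]
the image of the upper unitriangular matrices; it is elementary abelian and isomorphic to the additive group of $\GF(q)$. By Sylow's theorem $H$ lies in some conjugate of $U$. Conjugating in $\P\S\L(2,q)$ only relabels the points of the projective line and does not change the multiset of orbit lengths. So we may assume $H\le U$, that is, $H=\{x\mapsto x+b : b\in W\}$ for some additive subgroup $W\le \GF(q)$ with $|W|=\bar q$.

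Next I count the orbits. Every translation fixes $\infty$, so $\{\infty\}$ is an orbit of length $1$. On $\GF(q)$, a translation $x\mapsto x+b$ with $b\neq0$ has no fixed point, so $H$ acts semiregularly there. Concretely, the $H$-orbit of $x\in\GF(q)$ is the additive coset $x+W$, which has exactly $\bar q$ elements. These cosets partition $\GF(q)$ into $q/\bar q$ orbits of length $\bar q$. Because $H\neq1$, no point of $\GF(q)$ is fixed, so $\infty$ is the only fixed point. Hence $N_1=1$ and $N_{\bar q}=q/\bar q$.

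The only step needing care is the reduction to $H\le U$, and it uses only that $H$ is a $p$-group. In particular, the Klein four-groups inside dihedral subgroups for odd $q$ are excluded automatically, since $4\nmid q$ when $q$ is odd. The rest is the elementary observation that non-identity translations are fixed-point-free on the affine line.
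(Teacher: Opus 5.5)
Your proof is correct. Since $\bar q\mid q$ forces $H$ to be a $p$-group, Sylow's theorem lets you conjugate $H$ into the translation group $\{x\mapsto x+b\}$. There $H$ corresponds to an additive subgroup $W$, which fixes only $\infty$ and has the cosets $x+W$ as its remaining orbits.

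The paper gives no proof of this lemma; it is quoted from the cited source, so there is no argument to compare yours with. Your argument is a standard self-contained verification. Your caveat that $H\neq 1$ is needed for $N_1=1$ matches the paper's convention that only non-trivial subgroups of $\P\S\L(2,q)$ are listed.
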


\begin{lemma}{\rm\cite{Cameron2}}\label{lem2.8}
Let $H$ be a semi-direct product of the elementary abelian group of order $\bar{q}\mid q$ and the cyclic group of order $c>1$ with $c\mid \bar{q}-1$ and $c\mid q-1$.
Then we have $N_1=1$, $N_{\bar{q}}=1$ and $N_{c\bar{q}}=(q-\bar{q})/(c\bar{q})$.
\end{lemma}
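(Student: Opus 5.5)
The plan is to realise $H$ concretely as a group of affine maps of $\GF(q)$ and then count fixed points directly.

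First I place $H$ inside a point stabiliser. Let $U$ be the normal elementary abelian subgroup of order $\bar q$. A non-trivial $p$-subgroup of $\P\S\L(2,q)$ fixes exactly one point of the projective line $\GF(q)\cup\{\infty\}$. Since $U\trianglelefteq H$, the group $H$ permutes the fixed points of $U$, so $H$ fixes that unique point. After conjugating in $\P\G\L(2,q)$, which does not change orbit lengths, I may take this point to be $\infty$. Then $H$ lies in the Borel subgroup and acts on $\GF(q)$ by maps $x\mapsto ax+b$. Here $U$ consists of the translations $x\mapsto x+u$ with $u$ in an additive subgroup of order $\bar q$, which I also denote $U$. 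Since $U$ has order coprime to $c$, a complement $C$ of order $c$ exists, and after conjugating by a translation it fixes $0$, so $C=\{x\mapsto ax : a\in A\}$ with $|A|=c$. Because $C$ normalises $U$, we get $AU=U$.

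Next I identify the two short orbits. The point $\infty$ is fixed by $H$, which gives one orbit of length $1$. The orbit of $0$ is $U\cdot 0=U$, and it is preserved by $C$ because $AU=U$, so it is an orbit of length $\bar q$. Every other affine point lies outside $U$.

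The key step is to show that $H$ acts semiregularly on $\GF(q)\setminus U$. A non-identity translation fixes no affine point. A map $x\mapsto ax+b$ with $a\neq1$ fixes exactly one affine point, so I must show that this point always lies in $U$. I will do this by counting.
\begin{itemize}
\item For each $u\in U$, the conjugate $C^{t_u}$ of $C$ by the translation $t_u:x\mapsto x+u$ fixes $u$.
\item These $\bar q$ conjugates meet pairwise trivially, since a non-identity element with $a\ne1$ has a unique affine fixed point.
\item Hence they contribute $\bar q(c-1)$ distinct non-translation elements.
\item This is exactly the number of elements of $H=U\rtimes C$ that are not translations.
\end{itemize}
Therefore every non-translation element of $H$ fixes a point of $U$, and no non-identity element fixes a point of $\GF(q)\setminus U$.

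It follows that every $H$-orbit on $\GF(q)\setminus U$ is regular, of length $c\bar q$. The number of such orbits is $N_{c\bar q}=(q-\bar q)/(c\bar q)$, and we have $N_1=1$ and $N_{\bar q}=1$. The condition $c>1$ ensures that the orbit $U$ is not split further and that the count above is not vacuous. The condition $c\mid \bar q-1$ is consistent with $A$ acting fixed-point-freely on $U\setminus\{0\}$. The main point requiring care is the counting argument that places every fixed point inside $U$; the remaining steps are routine.
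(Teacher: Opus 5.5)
Your proof is correct. The paper does not prove this lemma; it quotes it, together with the other orbit lemmas for subgroups of $\P\S\L(2,q)$, from Cameron et al. So there is no internal argument to compare with, and your proposal supplies a self-contained derivation.

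The steps are all sound:
\begin{enumerate}
\item You place $H$ in the stabiliser of $\infty$, using the unique fixed point of the normal $p$-subgroup $U$.
\item You identify the orbits $\{\infty\}$ and $U$.
\item You show that the $\bar q$ conjugates $C^{t_u}$, for $u\in U$, are pairwise trivially intersecting and together exhaust the $\bar q(c-1)$ non-translations of $H$. This gives semiregularity on $\GF(q)\setminus U$.
\end{enumerate}

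Two facts are used only implicitly and deserve one line each. First, the $p$-elements of the affine group $\{x\mapsto ax+b\}$ are exactly the translations, since an element with $a\neq1$ has order equal to the multiplicative order of $a$. Second, it follows that $H\cap T=U$ for the translation group $T$. This is what makes the number of non-translations in $H$ equal to $|H|-|U|=\bar q(c-1)$.

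The key step also has a direct alternative. Since $AU=U$, the additive group $U$ is closed under multiplication by the subfield $\mathbb F_p(a)$ for each $a\in A$. Hence the unique affine fixed point $u/(1-a)$ of $x\mapsto ax+u$ lies in $U$. Your counting argument avoids this field-theoretic observation and is the standard Frobenius-group argument, so either route works.
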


\begin{lemma}{\rm\cite{Cameron2}}\label{lem2.9}
Let $H$ be $\P\S\L(2,\bar{q})$ with $\bar{q}^m=q$, $m\geq 1$.
Then  $N_{\bar{q}+1}=1$, $N_{\bar{q}(\bar{q}-1)}=1$ if $m$ is even, and all other orbits are regular.
\end{lemma}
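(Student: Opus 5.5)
We work in the natural model of $\P\S\L(2,q)$ acting on the projective line $\mathrm{PG}(1,q)=\GF(q)\cup\{\infty\}$ by fractional linear maps $x\mapsto (ax+b)/(cx+d)$. We take $H=\P\S\L(2,\bar q)$ embedded as the maps with $a,b,c,d\in\GF(\bar q)$, where $\GF(\bar q)\le \GF(q)$ because $\bar q^m=q$; by Dickson's classification any subgroup of this type is conjugate to this one. The plan is to sort the points $x\in\mathrm{PG}(1,q)$ by the degree $d$ of $x$ over $\GF(\bar q)$ (with $\infty$ counted in degree $1$) and to compute the stabilizer $H_x$ in each case. Note that $d$ divides $m$, since $x$ lies in $\GF(q)$, which has degree $m$ over $\GF(\bar q)$.

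\emph{Degree $1$.} The subline $\mathrm{PG}(1,\bar q)=\GF(\bar q)\cup\{\infty\}$ is $H$-invariant. Since $\P\S\L(2,\bar q)$ is transitive (indeed $2$-transitive) on it, it forms a single orbit of length $\bar q+1$. This accounts for $N_{\bar q+1}=1$.

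\emph{Key observation.} Let $g\in H$ and $x\in\GF(q)$ with $g(x)=x$. Because the coefficients of $g$ lie in $\GF(\bar q)$, applying the Frobenius $y\mapsto y^{\bar q}$ shows that $g$ also fixes $x^{\bar q}, x^{\bar q^2},\dots$. A non-identity fractional linear map fixes at most two points of the projective line, since its fixed points satisfy $cx^2+(d-a)x-b=0$. Hence if $d\ge 3$, then $x,x^{\bar q},x^{\bar q^2}$ are three distinct fixed points of $g$, which forces $g=1$. So every point of degree at least $3$ lies in a regular orbit.

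\emph{Degree $2$.} Such points exist only when $2\mid m$, i.e.\ $m$ is even, and they form $\GF(\bar q^2)\setminus\GF(\bar q)$, a set of size $\bar q^2-\bar q$. For such $x$, the stabilizer of $x$ in $\P\G\L(2,\bar q)$ also fixes $x^{\bar q}\neq x$. It is therefore the non-split torus, a cyclic group of order $\bar q+1$. Its intersection with $\P\S\L(2,\bar q)$ has order $(\bar q+1)/n'$, where $n'=(2,\bar q-1)$. The orbit length is then
\[
\frac{\bar q(\bar q^2-1)/n'}{(\bar q+1)/n'}=\bar q(\bar q-1).
\]
This equals the total number of degree-$2$ points, so they form exactly one orbit, giving $N_{\bar q(\bar q-1)}=1$ when $m$ is even and no such orbit when $m$ is odd.

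The main point requiring care is the degree-$2$ stabilizer computation. One must correctly identify the stabilizer as the full non-split torus, and handle the factor $(2,\bar q-1)$ when passing from $\P\G\L$ to $\P\S\L$. To do this, I would invoke the standard fact that a torus of $\P\G\L(2,\bar q)$ meets $\P\S\L(2,\bar q)$ in index $(2,\bar q-1)$, and then use the exact point count as a consistency check. The remaining steps follow directly from the fixed-point bound and the Frobenius argument.
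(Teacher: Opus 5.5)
The paper does not prove this lemma. Like the neighbouring Lemmas 2.5--2.13, it is quoted from the work of Cameron et al.\ on orbits of subgroups of $\P\S\L(2,q)$ on the projective line. Your argument is therefore a genuinely different, self-contained route, and it is correct.

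The Frobenius observation is the efficient core of your proof. Because $H$ has coefficients in $\GF(\bar q)$, any $g\in H_x$ fixes every $\GF(\bar q)$-conjugate of $x$. Hence every point of degree at least $3$ has trivial stabilizer. The subline $\mathrm{PG}(1,\bar q)$ is a single orbit. The $\bar q^2-\bar q$ points of degree $2$, which exist exactly when $m$ is even, also form a single orbit, because the computed orbit length equals their number. Compared with a citation, this shows directly why the parity of $m$ matters. The citation, in turn, keeps the paper uniform with the other orbit lemmas, which are harder to check by hand.

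Three small points should be tightened.
\begin{enumerate}
\item When $q$ is odd and $m$ is even, the subfield subgroups $\P\S\L(2,\bar q)$ form two $\P\S\L(2,q)$-classes, which are fused in $\P\G\L(2,q)$. You should say ``conjugate in $\P\G\L(2,q)$''. This suffices, since conjugation preserves orbit lengths on the line.
\item The equation $cx^2+(d-a)x-b=0$ misses the point $\infty$, which is fixed exactly when $c=0$. The bound of two fixed points for a nonidentity element is still correct.
\item The torus fact you invoke can be checked on the spot. A nonidentity element of $\P\G\L(2,\bar q)_x$ fixes the two points $x$ and $x^{\bar q}$, and hence no point of $\mathrm{PG}(1,\bar q)$. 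So $\P\G\L(2,\bar q)_x$ acts semiregularly on those $\bar q+1$ points, and its order divides $\bar q+1$. The orbit of $x$ lies among the $\bar q^2-\bar q$ points of degree $2$, which forces the order to be at least $\bar q+1$, hence equal to it. This stabilizer is cyclic, since it lies in the pointwise stabilizer of $\{x,x^{\bar q}\}$ in $\P\G\L(2,\bar q^2)$. For odd $\bar q$, it is not contained in $\P\S\L(2,\bar q)$, whose element orders divide $p$ or $(\bar q\pm1)/2$. Therefore it meets $H$ in a subgroup of index $2$.
\end{enumerate}
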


\begin{lemma}{\rm\cite{Cameron2}}\label{lem2.10}
Let $H$ be $\P\G\L(2,\bar{q})$ with $\bar{q}^m=q$, $m>1$ even.
Then $N_{\bar{q}+1}=1$, $N_{\bar{q}(\bar{q}-1)}=1$, and all other orbits are regular.
\end{lemma}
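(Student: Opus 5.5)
The plan is to realise $H=\P\G\L(2,\bar q)$ concretely inside $\P\S\L(2,q)$ and then read off its orbits on $\mathrm{PG}(1,q)=\GF(q)\cup\{\infty\}$ from the fixed points of its non-identity elements.

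First I will fix the embedding. $H$ is the group of Möbius maps $x\mapsto (ax+b)/(cx+d)$ with $a,b,c,d\in\GF(\bar q)$ and $ad-bc\neq0$. Every element of $\GF(\bar q)^*$ is a square in $\GF(\bar q^2)$. Since $m$ is even, $\GF(\bar q^2)\subseteq\GF(q)$, so every such determinant is a square in $\GF(q)$. Rescaling the matrix by a square root of the determinant gives determinant $1$, so $H\le\P\S\L(2,q)$. This is exactly where the hypothesis that $m$ is even enters.

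Next I split the point set into three $H$-invariant pieces:
\[
A=\GF(\bar q)\cup\{\infty\},\qquad C=\GF(\bar q^2)\setminus\GF(\bar q),\qquad R=\GF(q)\setminus\GF(\bar q^2).
\]
Each piece is invariant because Möbius maps with coefficients in $\GF(\bar q)$ preserve the subfields $\GF(\bar q)$ and $\GF(\bar q^2)$.
\begin{enumerate}[(a)]
\item On $A$, $H$ acts as $\P\G\L(2,\bar q)$ on its natural projective line. This action is sharply $3$-transitive, so $A$ is a single orbit of length $\bar q+1$.
\item On $C$, which has $\bar q^2-\bar q$ points, I will show transitivity by an orbit--stabilizer count. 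For $x\in C$, any $g\in H_x$ is also $\mathrm{Gal}(\GF(\bar q^2)/\GF(\bar q))$-equivariant, so it fixes the conjugate $x^{\bar q}\neq x$ as well. Hence $H_x$ is contained in the stabilizer of a pair of conjugate points, which is the non-split torus of order $\bar q+1$. Conversely, the elements of $\GF(\bar q^2)^*/\GF(\bar q)^*$, acting by multiplication in a suitable basis, give exactly $\bar q+1$ elements fixing $x$. So $|x^H|=\bar q(\bar q^2-1)/(\bar q+1)=\bar q(\bar q-1)=|C|$, and $C$ is one orbit.
\item On $R$, I will show that every point has trivial stabiliser. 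A non-identity $g\in H$ fixes $x$ only if $cx^2+(d-a)x-b=0$. This is a nonzero polynomial of degree at most $2$ over $\GF(\bar q)$; if it vanishes identically, then $g$ is the identity. Its roots therefore lie in $\GF(\bar q^2)\cup\{\infty\}$, so no non-identity element fixes a point of $R$. Hence all orbits on $R$ are regular.
\end{enumerate}

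Combining (a)--(c) gives $N_{\bar q+1}=1$, $N_{\bar q(\bar q-1)}=1$, and all remaining orbits regular. The lengths $\bar q+1$ and $\bar q(\bar q-1)$ cannot coincide with the regular length $|H|=\bar q(\bar q^2-1)$, since $\bar q\ge2$. The only step needing real care is (b), namely pinning down $H_x$ exactly. An alternative is to note that $\P\G\L(2,\bar q)$ acts on $C$ as on the points of the inversive-plane complement, where transitivity is standard. If $R$ is empty, that is $m=2$, the statement simply has no regular orbits, consistent with $(\bar q+1)+\bar q(\bar q-1)=\bar q^2+1=q+1$.
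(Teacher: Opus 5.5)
The paper gives no proof of this lemma. It is quoted from \cite{Cameron2}, together with the orbit data for the other subgroups in Dickson's list, so there is no internal argument to compare yours against.

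Your proof is correct and self-contained. Splitting $\GF(q)\cup\{\infty\}$ into $H$-invariant pieces along the chain $\GF(\bar q)\subseteq\GF(\bar q^2)\subseteq\GF(q)$ is the right structure. Steps (a) and (c) are complete as written; in (c), the whole content is that the fixed-point polynomial has degree at most $2$ over $\GF(\bar q)$.

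Two small tightenings are worth making.

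\begin{itemize}
\item In (b) you only need the upper bound $|H_x|\le\bar q+1$. Then $|x^H|\ge\bar q(\bar q-1)=|C|$, and together with $x^H\subseteq C$ this forces $x^H=C$, so the ``conversely'' part can be dropped. The bound can also be checked directly rather than by appeal to the non-split torus. Let $t^2+\alpha t+\beta$ be the minimal polynomial of $x$ over $\GF(\bar q)$. Since $1,x$ are independent over $\GF(\bar q)$, the equation $cx^2+(d-a)x-b=0$ forces $d=a+c\alpha$ and $b=-c\beta$. So the matrices fixing $x$ are $\begin{pmatrix}a&-c\beta\\ c&a+c\alpha\end{pmatrix}$ with $(a,c)\neq(0,0)$. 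These are the nonzero elements of a subfield of order $\bar q^2$ of the matrix ring, and this subfield contains the scalars. Hence exactly $(\bar q^2-1)/(\bar q-1)=\bar q+1$ elements of $H$ fix $x$.
\item The evenness of $m$ is not used only to place $H$ inside $\P\S\L(2,q)$, as your remark suggests. It is also what makes $C$ a subset of the point set at all, and that is exactly what produces the orbit of length $\bar q(\bar q-1)$. For odd $m$ this orbit disappears, consistent with Lemma~\ref{lem2.9}.
\end{itemize}

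Citing \cite{Cameron2} lets the paper handle all Dickson subgroups uniformly in one stroke. Your direct argument has the advantage of showing exactly where each hypothesis enters.
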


\begin{lemma}{\rm\cite{Cameron2}}\label{lem2.11}
Let $H$ be isomorphic to $\A_4$.
Then
\begin{enumerate}[{\rm(i)}]

\item for $q\equiv1~(\mod4)$, we have
\begin{enumerate}[{\rm(a)}]
\item if $3\mid\frac{q+1}{2}$, then $N_6=1$ and $N_{12}=(q-5)/12$,

\item if $3\mid\frac{q-1}{2}$, then $N_4=2$, $N_6=1$ and $N_{12}=(q-13)/12$,

\item if $3\mid q$, then $N_4=1$, $N_6=1$ and $N_{12}=(q-9)/12$;
\end{enumerate}

\item for $q\equiv3~(\mod4)$, we have
\begin{enumerate}[{\rm(a)}]
\item if $3\mid\frac{q+1}{2}$, then $N_{12}=(q+1)/12$,

\item if $3\mid\frac{q-1}{2}$, then $N_4=2$ and $N_{12}=(q-7)/12$,

\item if $3\mid q$, then $N_4=1$ and $N_{12}=(q-3)/12$;
\end{enumerate}

\item for $q=2^e$, $e\equiv0~(\mod2)$, we have $N_1=1$, $N_4=1$ and $N_{12}=(q-4)/12$.
\end{enumerate}
\end{lemma}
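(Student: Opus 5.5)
The plan is to compute, for each point $x$ of the projective line $\mathrm{PG}(1,q)$, the stabilizer $H_x$ in $H\cong\A_4$. The orbit of $x$ then has length $12/|H_x|$, and the $N_l$ follow by counting the points with each possible stabilizer type.

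\textbf{Step 1: the possible stabilizers.} Every point stabilizer in $\P\S\L(2,q)$ is a Borel subgroup $E_q\rtimes C_{(q-1)/n}$, so $H_x$ is a $p$-group extended by a cyclic $p'$-group. In particular, for $q$ odd its Sylow $2$-subgroup is cyclic. Going through the subgroups of $\A_4$ of this shape leaves only the following options:
\begin{itemize}
\item for $q$ odd, $H_x$ is $1$, $C_2$ or $C_3$, since $V_4$ and $\A_4$ have non-cyclic Sylow $2$-subgroups;
\item for $q$ even, $H_x$ can also be $V_4$ or $\A_4=V_4\rtimes C_3$.
\end{itemize}

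\textbf{Step 2: fixed points of single elements.} I use the standard facts underlying Lemma~\ref{lem2.5}. A non-identity element of order $r$ fixes $2$ points if $r\mid\frac{q-1}{n}$, fixes no point if $r\mid\frac{q+1}{n}$, and fixes exactly $1$ point if $r=p$.

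\textbf{Step 3: the case $q$ odd.} $\A_4$ has three involutions, all conjugate in $\A_4$, and four subgroups of order $3$, also conjugate.
\begin{itemize}
\item If $q\equiv1\pmod4$, each involution fixes $2$ points. This gives $6$ points, pairwise distinct, since a point fixed by two involutions would have stabilizer containing $V_4$. Each of these points has stabilizer $C_2$, hence orbit length $6$. By conjugacy the $6$ points form one orbit, so $N_6=1$.
\item If $q\equiv3\pmod4$, involutions fix no points and there is no orbit of length $6$.
\item If $3\mid\frac{q-1}{2}$, the four $C_3$'s give $8$ points with stabilizer $C_3$, hence orbits of length $4$, so $N_4=2$.
\item If $3\mid q$, each $C_3$ fixes exactly one point. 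This gives $4$ points and $N_4=1$.
\item If $3\mid\frac{q+1}{2}$, there are no orbits of length $4$.
\end{itemize}
All remaining points are regular, giving $N_{12}$. For example, in case (i)(b) we get $N_{12}=(q+1-6-8)/12=(q-13)/12$. The other five subcases of (i) and (ii) follow the same way.

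\textbf{Step 4: the case $q=2^e$.} Here $V_4$ is a $2$-group, so it lies in a unipotent radical and fixes a unique point $\infty$. Since $\A_4$ normalizes $V_4$, it fixes $\infty$, so $N_1=1$. This is the only point whose stabilizer contains an involution, because each involution fixes exactly one point. Each $C_3$ fixes $2$ points, since $3\mid q-1$ when $e$ is even; one of them is $\infty$. The four remaining points have stabilizer $C_3$ and form one orbit of length $4$. The rest of the points are regular, so $N_{12}=(q-4)/12$.

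The main care point is Step~1: ruling out larger stabilizers for $q$ odd, and confirming that the $6$ points, respectively the $8$ points, are distinct and fall into exactly the claimed number of orbits. Both follow from conjugacy within $\A_4$ together with the structure of Borel subgroups.
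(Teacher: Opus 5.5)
Your proof is correct. The paper gives no proof of this lemma at all; it quotes it from \cite{Cameron2}. So there is no internal argument to compare with. Your derivation is a valid self-contained verification that uses only the fixed-point data already recorded in the paper's preliminaries.

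For $q$ odd, the Borel structure of point stabilizers forces $H_x\in\{1,C_2,C_3\}$. For $q$ even, the unique fixed point $\infty$ of $V_4$ determines everything. After that, counting the fixed points of involutions and of elements of order $3$ gives exactly the six sub-cases of (i)--(ii) and case (iii).

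Two small points of precision:
\begin{itemize}
\item The fact that a non-identity element of order $p$ fixes exactly one point comes from Lemma~\ref{lem2.7} with $\bar q=p$, not from Lemma~\ref{lem2.5}.
\item The distinctness of the $8$ (resp.\ $4$) points fixed by the subgroups of order $3$ is not really a conjugacy fact. Two distinct subgroups of order $3$ generate all of $\A_4$. When $q$ is odd, Step~1 forbids $\A_4$ as a point stabilizer. When $q$ is even, $\A_4$ contains involutions, so any point fixed by two such subgroups must be $\infty$. It is worth stating this explicitly, since it is exactly what makes the counts $N_4=2$ and $N_4=1$ come out right.
\end{itemize}
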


\begin{lemma}{\rm\cite{Cameron2}}\label{lem2.12}
Let $H$ be isomorphic to $\S_4$.
Then
\begin{enumerate}[{\rm(i)}]
\item for $q\equiv1~(\mod8)$, we have
\begin{enumerate}[{\rm(a)}]
\item if $3\mid\frac{q+1}{2}$, then $N_6=1$, $N_{12}=1$ and $N_{24}=(q-17)/24$,

\item if $3\mid\frac{q-1}{2}$, then $N_6=1$, $N_8=1$, $N_{12}=1$ and $N_{24}=(q-25)/24$,

\item if $3\mid q$, then $N_4=1$, $N_6=1$ and $N_{24}=(q-9)/24$;
\end{enumerate}

\item for $q\equiv-1~(\mod8)$, we have
\begin{enumerate}[{\rm(a)}]
\item if $3\mid\frac{q+1}{2}$, then $N_{24}=(q+1)/24$,

\item if $3\mid\frac{q-1}{2}$, then $N_8=1$ and $N_{24}=(q-7)/24$.
\end{enumerate}
\end{enumerate}
\end{lemma}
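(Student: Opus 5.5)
The approach is to determine the orbit lengths from the point stabilisers $H_x$ of $H\cong\S_4\leq\P\S\L(2,q)$ acting on the $q+1$ points of the projective line. First, $\S_4$ embeds in $\P\S\L(2,q)$ only for $q$ odd with $q\equiv\pm1~(\mod 8)$, and also for $q=3^e$ when $e$ is even; this is why the two cases (i) and (ii) exhaust the possibilities. Second, the stabiliser in $\P\S\L(2,q)$ of a point is a Borel subgroup $E_q\rtimes C_{(q-1)/2}$. Hence $H_x=H\cap(E_q\rtimes C_{(q-1)/2})$ is cyclic of order coprime to $p$, or an extension of a $p$-group by a cyclic group. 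Third, a non-identity element $g$ of order $m$ fixes $2$ points if $m\mid\frac{q-1}{2}$, $0$ points if $m\mid\frac{q+1}{2}$ with $m>2$ or with $m=2$ and $q\equiv 3~(\mod 4)$, and $1$ point if $m=p$. This is the same fixed-point information that underlies Lemmas~\ref{lem2.5} and~\ref{lem2.6}.

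Next I list the non-regular orbits through the maximal cyclic subgroups of $\S_4$. There are three conjugate subgroups $C_4$, four conjugate subgroups $C_3$, and six conjugate subgroups $C_2$ generated by transpositions. The double-transposition involutions lie inside the $C_4$'s. Outside characteristic $3$ every $H_x$ is cyclic, so a point with non-trivial stabiliser is fixed by exactly one of these maximal cyclic subgroups. A class of $s$ conjugate maximal cyclic subgroups of order $m$, each fixing $f$ points, therefore contributes $sf$ points, all with stabiliser exactly $C_m$. These points form $sf\cdot m/24$ orbits of length $24/m$; in each case below this is a single orbit, by transitivity of $H$ on the class together with the normaliser acting on the fixed pair.

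For $q\equiv1~(\mod 8)$ we have $4\mid\frac{q-1}{2}$. The $C_4$'s give $6$ points, hence $N_6=1$, and the transposition $C_2$'s give $12$ points, hence $N_{12}=1$. If $3\mid\frac{q-1}{2}$, the $C_3$'s give $8$ points, hence $N_8=1$. If $3\mid\frac{q+1}{2}$, they fix nothing. The remaining points are regular, so $N_{24}=(q+1-6-12-8)/24=(q-25)/24$ or $N_{24}=(q-17)/24$ respectively. For $q\equiv-1~(\mod 8)$ every element of order $2$ or $4$ lies in a torus of order $\frac{q+1}{2}$ and so is fixed-point-free. Only the $C_3$'s can fix points, and they do so exactly when $3\mid\frac{q-1}{2}$. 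This gives case (ii).

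The main obstacle is case (i)(c), $3\mid q$. Here each element of order $3$ fixes a unique point, and $H_x$ may be non-cyclic, namely $S_3=C_3\rtimes C_2$ inside the Borel subgroup. I would handle it by showing that the normaliser $N_H(C_3)\cong S_3$ fixes the unique fixed point of $C_3$. This gives one orbit of length $24/6=4$, which accounts for one fixed point of each transposition. The second fixed point of each transposition then has stabiliser exactly $C_2$, and I would check via $N_H(C_2)$ that these points form orbits of length $12$, merging with the $C_4$-orbit count. A Burnside fixed-point count should reconcile the stated values $N_4=1$, $N_6=1$, $N_{24}=(q-9)/24$: it must show that no orbit of length $12$ survives, presumably because the fixed points of the transposition involutions are exhausted by the length-$4$ and length-$6$ orbits. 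This check is the step I expect to require the most care.
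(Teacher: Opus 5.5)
Your fixed-point analysis of (i)(a), (i)(b) and (ii) is correct. The paper itself only quotes this lemma, so there is no proof there to compare against. The gap is in (i)(c), where the step you propose is false. It is not the case that the orbit of length $4$ accounts for only one fixed point of each transposition while the other fixed point has stabiliser exactly $C_2$. If it were, those six points would make up $6\cdot 2/24=\tfrac12$ of an orbit of length $12$, which is absurd. Your closing remark that no orbit of length $12$ survives also contradicts the claim you have just made.

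The Burnside count you mention would settle the numbers, but you did not carry it out, and it works only once that false claim is dropped. It gives $\frac1{24}(q+1+6\cdot2+3\cdot2+8\cdot1+6\cdot2)=\frac{q+39}{24}$ orbits. So once single orbits of lengths $4$ and $6$ are established, the remaining $q-9$ points lie in $(q-9)/24$ orbits and must all be regular.

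The missing observation is that both fixed points of every transposition lie in the orbit of length $4$. The four subgroups $N_H(C_3)\cong S_3$ are the stabilisers of the four letters. A transposition $(ab)$ lies in exactly two of them, $\mathrm{Sym}\{a,b,c\}$ and $\mathrm{Sym}\{a,b,d\}$, where $\{a,b,c,d\}=\{1,2,3,4\}$. Hence $(ab)$ fixes the unique fixed points of $\langle(abc)\rangle$ and of $\langle(abd)\rangle$. These two points are distinct: otherwise that point would be fixed by $\langle(abc),(abd)\rangle=A_4$. But every subgroup of the Borel subgroup $E_q\rtimes C_{(q-1)/2}$ has a normal Sylow $3$-subgroup with cyclic quotient, and $A_4$ does not. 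Since an involution fixes exactly two points when $q\equiv1\pmod4$, these are all of its fixed points; equivalently, count $4\cdot3=6\cdot2$ incidences.

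You never invoke this property of Borel subgroups, but it is also needed elsewhere in (i)(c). It shows that:
\begin{enumerate}
\item the four $C_3$-fixed points are distinct and have stabiliser exactly $S_3$;
\item the six $C_4$-fixed points have stabiliser exactly $C_4$, which excludes $D_8$ and $S_4$;
\item these two sets of points are disjoint.
\end{enumerate}
Then exactly $10$ points have non-trivial stabiliser. This gives $N_4=N_6=1$, no orbit of length $12$, and $N_{24}=(q-9)/24$.
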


\begin{lemma}{\rm\cite{Cameron2}}\label{lem2.13}
Let $H$ be isomorphic to $\A_5$.
Then
\begin{enumerate}[{\rm(i)}]
\item for $q\equiv1~(\mod4)$, we have
\begin{enumerate}[{\rm(a)}]
\item if $q=5^e$, $e\equiv1~(\mod2)$, then $N_6=1$ and $N_{60}=(q-5)/60$,

\item if $q=5^e$, $e\equiv0~(\mod2)$, then $N_6=1$, $N_{20}=1$ and $N_{60}=(q-25)/60$,

\item if $15\mid\frac{q+1}{2}$, then $N_{30}=1$ and $N_{60}=(q-29)/60$,

\item if $3\mid\frac{q+1}{2}$ and $5\mid\frac{q-1}{2}$, then $N_{12}=1$, $N_{30}=1$ and $N_{60}=(q-41)/60$,

\item if $3\mid\frac{q-1}{2}$ and $5\mid\frac{q+1}{2}$, then $N_{20}=1$, $N_{30}=1$ and $N_{60}=(q-49)/60$,

\item if $15\mid\frac{q-1}{2}$, then $N_{12}=1$, $N_{20}=1$, $N_{30}=1$ and $N_{60}=(q-61)/60$,

\item if $3\mid q$ and $5\mid\frac{q+1}{2}$, then $N_{10}=1$ and $N_{60}=(q-9)/60$,

\item if $3\mid q$ and $5\mid\frac{q-1}{2}$, then $N_{10}=1$, $N_{12}=1$ and $N_{60}=(q-21)/60$;
\end{enumerate}

\item for $q\equiv3~(\mod4)$, we have
\begin{enumerate}[{\rm(a)}]
\item if $15\mid\frac{q+1}{2}$, then $N_{60}=(q+1)/60$,

\item if $3\mid\frac{q+1}{2}$ and $5\mid\frac{q-1}{2}$, then $N_{12}=1$ and $N_{60}=(q-11)/60$,

\item if $3\mid \frac{q-1}{2}$ and $5\mid\frac{q+1}{2}$, then $N_{20}=1$ and $N_{60}=(q-19)/60$,

\item if $15\mid \frac{q-1}{2}$, then $N_{12}=1$, $N_{20}=1$ and $N_{60}=(q-31)/60$.
\end{enumerate}
\end{enumerate}
\end{lemma}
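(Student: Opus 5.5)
The plan is to compute orbit lengths from point stabilizers. Fix $H\cong\A_5\leq\P\S\L(2,q)$ with $q$ odd; the listed cases all have $q$ odd. For a point $x$ of the projective line, the stabilizer $\P\S\L(2,q)_x$ is a Borel subgroup: an elementary abelian $p$-group of order $q$ extended by a cyclic group of order $(q-1)/2$. Hence $H_x$ is a subgroup of $\A_5$ that is a $p$-group extended by a cyclic group of order dividing $(q-1)/2$. The orbit of $x$ has length $60/|H_x|$, so it suffices to find which subgroups of $\A_5$ occur as $H_x$ and how many points have each stabilizer.

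\textbf{Step 1: fixed points of elements.} A non-identity element $g$ of order $d$ coprime to $p$ fixes $2$ points if $d\mid\frac{q-1}{2}$ and $0$ points if $d\mid\frac{q+1}{2}$; this is the case $c=d$ of Lemma~\ref{lem2.5}. An element of order $p$ fixes exactly one point (Lemma~\ref{lem2.7}). For $d\in\{2,3,5\}$ and $p\nmid d$, which alternative holds is decided by $q\bmod 4$ and by whether $3$ or $5$ divides $\frac{q\pm1}{2}$. This is exactly how the cases (a)--(h) of part (i) and (a)--(d) of part (ii) are split.

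\textbf{Step 2: possible stabilizers.} Suppose $p\notin\{3,5\}$. Then $H_x$ must be cyclic of order coprime to $p$. Indeed, a Klein four-group or $\A_4$ cannot lie in a Borel subgroup when $p$ is odd, since it is neither cyclic nor a $p$-group extended by a cyclic group. So $H_x\in\{1,C_2,C_3,C_5\}$.

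For $d\in\{2,3,5\}$ with $d\mid\frac{q-1}{2}$, count the points fixed by some subgroup of order $d$. There are $15$, $10$, $6$ such subgroups respectively, each fixing $2$ points. Two distinct subgroups of the same prime order cannot fix a common point, since that point's stabilizer is cyclic. Hence these points form
\[
\frac{2\cdot 15}{1}=30\text{ points with stabilizer }C_2,\quad
20\text{ with stabilizer }C_3,\quad
12\text{ with stabilizer }C_5 .
\]
Each of these sets is a single $H$-orbit, because its length equals $60/d$. This yields the orbits $N_{30}=1$, $N_{20}=1$, $N_{12}=1$ in the appropriate cases.

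All remaining points are regular, so
\[
N_{60}=\frac{q+1-(\text{sum of short orbit lengths})}{60}.
\]
For example, in case (i)(f) this gives $(q+1-62)/60=(q-61)/60$. The other cases follow identically.

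\textbf{Step 3: the cases $p=5$ and $p=3$.}

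For $p=5$ (so $q\equiv1 \pmod 4$), each of the $6$ Sylow $5$-subgroups $C_5$ fixes a unique point. Its normalizer $D_{10}$ in $\A_5$ fixes that point too, since it preserves the fixed-point set of $C_5$, which is a single point. Thus the stabilizer is $D_{10}$ and the orbit has length $6$.

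The $15$ involutions have $30$ fixed-point incidences in total. Each of the $6$ points above is fixed by the $5$ involutions of its $D_{10}$, which accounts for all $30$ incidences. So no orbit of length $30$ appears.

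Elements of order $3$ contribute an orbit of length $20$ exactly when $3\mid\frac{q-1}{2}$, that is, when $e$ is even. This gives (i)(a) and (i)(b).

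For $p=3$, each $C_3$ fixes one point. Its stabilizer is $N_{\A_5}(C_3)=\S_3$, giving $N_{10}=1$. The involutions again have all their fixed points inside this orbit, since $10\cdot 3=30$. Elements of order $5$ add $N_{12}=1$ when $5\mid\frac{q-1}{2}$. This yields (i)(g) and (i)(h); note that $3\mid q$ forces $q\equiv1\pmod4$ here because $\A_5\leq\P\S\L(2,3^e)$ needs $e$ even.

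\textbf{Main obstacle.} The delicate point is justifying the complete list of possible $H_x$, especially in characteristics $3$ and $5$. There one must check that the normalizer of the Sylow subgroup really lies in the Borel subgroup, and that no larger subgroup, such as $\A_4$ or $C_5\rtimes C_4$ (which does not exist in $\A_5$), occurs. One must also verify that the fixed-point incidences of involutions are exhausted by the short orbits, so that no spurious orbit of length $30$ arises. Once this is settled, every case reduces to subtracting the short orbit lengths from $q+1$.
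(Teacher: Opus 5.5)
Your argument is correct, and it differs from the paper only because the paper gives no argument: Lemma~\ref{lem2.13}, like the other orbit lemmas of Section~2, is quoted from \cite{Cameron2}. You derive the orbit data directly, as follows.
\begin{itemize}
\item Point stabilizers of $H$ lie in a Borel subgroup $E_q\rtimes C_{(q-1)/2}$. For $p\nmid 60$ they meet $E_q$ trivially, so they are cyclic, hence trivial or of prime order.
\item The fixed-point counts of Lemmas~\ref{lem2.5} and~\ref{lem2.7} then give $30$, $20$, $12$ points with stabilizer $C_2$, $C_3$, $C_5$, according to the congruences.
\item In characteristics $5$ and $3$, the normalizers $D_{10}$ and $\S_3$ of the unipotent Sylow subgroups produce the orbits of lengths $6$ and $10$. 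The incidence count $15\cdot2=6\cdot5=10\cdot3$ shows there is no orbit of length $30$.
\end{itemize}
All twelve formulas for $N_{60}$ check out. Your route gives a self-contained proof using only Lemmas~\ref{lem2.5}, \ref{lem2.7} and the subgroup lattice of $\A_5$. It also shows why the cases split by $q\bmod 4$ and by which of $\frac{q\pm1}{2}$ the primes $3$ and $5$ divide. The citation instead gives a uniform treatment of all subgroup types in Dickson's list at no cost in length.

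A few steps you deferred to your ``main obstacle'' paragraph should be written out; each takes one line.
\begin{itemize}
\item In Step~2, do not argue only with $V_4$ and $\A_4$. The clean statement is that a subgroup of order prime to $p$ embeds in the cyclic quotient $C_{(q-1)/2}$, which also excludes $\S_3$ and $D_{10}$.
\item In Step~3, the stabilizer of the fixed point of a Sylow $5$-subgroup (resp.\ $3$-subgroup) is exactly $D_{10}$ (resp.\ $\S_3$). This holds because these subgroups are maximal in $\A_5$, and $\A_5$, being non-solvable, fixes no point.
\item In characteristic $5$ (resp.\ $3$), you need that a point fixed by some $C_3$ (resp.\ $C_5$) has stabilizer exactly that subgroup, so that these $20$ (resp.\ $12$) points are distinct and form one orbit. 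This follows from your incidence count. Every involution-fixed point lies in the orbit of length $6$ (resp.\ $10$), whose stabilizer has no element of order $3$ (resp.\ $5$). So the stabilizer in question contains no involution and has odd order, and the only odd-order subgroups of $\A_5$ are $1$, $C_3$, $C_5$.
\end{itemize}
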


\begin{lemma}{\rm\cite{Camina1}}\label{lem2.14}
Let $G$ be a permutation group on a set $\Delta$.
For a prime $p$, assume that $p^e\parallel|\Delta|$ and $p^e\mid [G:G_{\alpha\beta}]$ for all $\alpha,\beta\in\Delta$, $\alpha\neq\beta$.
Then the length of each orbit of $G$ on $\Delta$ is divisible by $p^e$.
\end{lemma}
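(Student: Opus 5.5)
The plan is to fix an orbit of $G$ and show that the index of a point stabilizer is divisible by $p^e$, by decomposing all of $\Delta$ into orbits of that stabilizer.

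If $e=0$ there is nothing to prove, so assume $e\geq1$; then $p$ divides $|\Delta|$. Let $\Omega$ be any $G$-orbit on $\Delta$ and fix $\alpha\in\Omega$. Then $|\Omega|=[G:G_\alpha]$, so it suffices to show $p^e\mid[G:G_\alpha]$. Write $p^a\parallel[G:G_\alpha]$ and suppose, for a contradiction, that $a<e$.

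Decompose $\Delta\setminus\{\alpha\}$ into orbits of $G_\alpha$; these need not lie in $\Omega$. For any $\beta\neq\alpha$, the $G_\alpha$-orbit of $\beta$ has length $[G_\alpha:G_{\alpha\beta}]$. We have
\[
[G:G_{\alpha\beta}]=[G:G_\alpha]\,[G_\alpha:G_{\alpha\beta}],
\]
and by hypothesis $p^e$ divides the left-hand side. Since $p^a\parallel[G:G_\alpha]$, it follows that $p^{e-a}\mid[G_\alpha:G_{\alpha\beta}]$. As $e-a\geq1$, every $G_\alpha$-orbit on $\Delta\setminus\{\alpha\}$ has length divisible by $p$.

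Summing these orbit lengths together with the fixed point $\alpha$ gives $|\Delta|\equiv1\pmod p$. This contradicts $p\mid|\Delta|$, so $a\geq e$ and hence $p^e\mid|\Omega|$. The only point needing care is that the hypothesis applies to all pairs $\alpha\neq\beta$ of $\Delta$, not just pairs inside one orbit, since the counting uses $G_\alpha$-orbits on the whole of $\Delta$. I do not expect any step to be a genuine obstacle.
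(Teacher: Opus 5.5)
Your proof is correct and complete. The paper gives no argument of its own for this lemma; it simply cites Camina--Gagen. Your counting of $G_\alpha$-orbits on $\Delta\setminus\{\alpha\}$ is the standard argument. It is equivalent to the Sylow formulation: a Sylow $p$-subgroup of $G_\alpha$ has a number of fixed points congruent to $|\Delta|\equiv 0\pmod p$, so it fixes some $\beta\neq\alpha$, and hence $[G:G_\alpha]$ and $[G:G_{\alpha\beta}]$ have the same $p$-part. Your argument uses only $p\mid|\Delta|$, not the exactness $p^e\parallel|\Delta|$, so it proves slightly more than the lemma states.
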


The following classification result on flag-transitive $5$-designs will also be used in the proof of Theorem~\ref{th1.1}.

\begin{lemma} {\rm\cite{GongFlag}}\label{lem2.15}
There exists no non-trivial flag-transitive $5$-$(v,k,2)$ design.
\end{lemma}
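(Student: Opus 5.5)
The plan is to combine the homogeneity forced by flag-transitivity with the arithmetic of the parameters, and then check the short list of candidate groups. Suppose $\mathcal D$ is a non-trivial flag-transitive $5$-$(v,k,2)$ design with flag-transitive group $G$. By Theorem~\ref{th2.4}(ii), $G$ is point $\lfloor 6/2\rfloor=3$-homogeneous on $\mathcal P$. I would then invoke Kantor's classification of $3$-homogeneous groups. Either $G$ is $3$-transitive, or $G$ is one of $\A\G\L(1,8)$, $\A\Gamma\L(1,8)$, $\A\Gamma\L(1,32)$, or $\P\S\L(2,q)\le G\le\P\Sigma\L(2,q)$ with $q\equiv 3\pmod 4$. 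The $3$-transitive groups are in turn known: $\A_v$, $\S_v$, the Mathieu groups and $\M_{11}$ on $12$ points, $\P\S\L(2,q)\le G\le \P\Gamma\L(2,q)$ acting on $q+1$ points, $\A\G\L(d,2)$, $2^4{:}\A_7$, and the small groups $\L_2(11)$ on $11$ points and $\A_7$ on $15$ points.

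The arithmetic constraints I would use are the following.
\begin{enumerate}[(a)]
\item Flag-transitivity makes $G_x$ transitive on the $r$ blocks through $x$. Hence $r=\lambda_1=2\binom{v-1}{4}/\binom{k-1}{4}$ divides $|G_x|$.
\item $G_B$ is transitive on $B$, so $k$ divides $|G_B|$.
\item $b=2\binom v5/\binom k5$ divides $|G|$.
\item Corollary~\ref{cor2.3} gives $k<\sqrt{2v}+4$.
\end{enumerate}
From (a) and (d), $r \gtrsim 2v^4/k^4$, which grows at least like a constant multiple of $v^2$. The point of this is that $|G_x|$ must be at least quadratic in $v$, with the prime factors of $v-2$, $v-3$ and $v-4$ largely surviving in $r$.

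Next I would dispose of the families in turn.
\begin{itemize}
\item If $G\ge \A_v$, then $G$ is $k$-homogeneous, so $\mathcal B$ consists of all $k$-subsets. This design is trivial, or it is non-simple with $\lambda=2$ impossible.
\item For the sporadic cases (Mathieu groups, $2^4{:}\A_7$, $\A_7$ on $15$ points, $\L_2(11)$, and the affine one-dimensional groups on $8$ and $32$ points), $v$ is bounded. I would list the admissible $k$ from (d) and the integrality conditions of Lemma~\ref{lem2.1}, and check (a)--(c) directly.
\item For $\P\S\L(2,q)\le G\le \P\Gamma\L(2,q)$ with $v=q+1$, we have $|G_x|\le q(q-1)e$. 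Condition (a) then forces $\binom{q}{4}/\binom{k-1}{4}$ to divide $q(q-1)e$, up to a factor $2$. Since $\gcd(q-2,\,q(q-1))$ and $\gcd(q-3,\,q(q-1))$ are tiny, and $k^4\lesssim 4q^2$, only very small $q$ survive. For those I would finish by hand, or by appeal to the orbit tables of Lemmas~\ref{lem2.5}--\ref{lem2.13} applied to $G_B$.
\item For $\A\G\L(d,2)$ with $v=2^d$, condition (a) requires the odd part of $(2^d-3)$ essentially to divide $|\G\L(d,2)|$, after dividing by factors shared with $\binom{k-1}{4}$.
\end{itemize}

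I expect the affine family $\A\G\L(d,2)$ to be the main obstacle, because $|\G\L(d,2)|$ is huge and the divisibility in (a) is far from automatic. Here I would try two tools. The first is a Zsigmondy-type primitive prime divisor argument on $2^d-3$, or on the factor $2^{d}-1-j$, combined with the bound on $k$. The second, failing that, is a structural argument. Any three points of $\mathrm{AG}(d,2)$ determine a unique fourth coplanar point, and $G_B$ is transitive on $B$ while $G$ is $3$-transitive. Together these force $B$ either to be closed under this completion, making it an affine subspace, or to contain no affine plane at all. One then checks that neither possibility yields every $5$-set lying in exactly two blocks. Combining this with the small-$d$ cases done by direct computation should complete the proof.
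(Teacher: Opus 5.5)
The paper gives no proof of this lemma; it quotes it from \cite{GongFlag}. Your plan cannot be completed, because the statement as formulated here is false. Here ``non-trivial'' means simple with $5<k<v$. The two places where your sketch dismisses cases are exactly where counterexamples occur.

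\textbf{The alternating case.} You claim that $\lambda=2$ is impossible for the complete design. But $\binom{v-5}{k-5}=2$ has the solution $(v,k)=(7,6)$. The seven $6$-subsets of a $7$-set form a simple $5$-$(7,6,2)$ design, and $\S_7$ is flag-transitive on it. The paper's own treatment of Case (B1) finds exactly this pair. At the very least you would need an extra hypothesis excluding complete designs.

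\textbf{The $\P\S\L(2,q)$ family.} This is the more serious problem. Your assertion that ``only very small $q$ survive'' does not follow. Inside $\P\Gamma\L(2,q)$, once $q(q-1)$ cancels, condition (a) is just $2(q-2)(q-3)\mid e(k-1)(k-2)(k-3)(k-4)$. Condition (d) allows the right-hand side to be of order $4eq^2$. So neither a size comparison nor common divisors with $q(q-1)$ (which has already cancelled) bound $q$.

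Worse, $(q,k)=(23,8)$ with $G=\P\G\L(2,23)$ satisfies all of (a)--(d): $r=506=|G_x|$, $b=1518$, and $|G_B|=8=k$. This case is realized by the design of Theorem~\ref{th1.1}(2). Take $B_+=\{\infty,0,1,3,12,15,21,22\}$. The maps $x\mapsto(x-1)/(x+1)$ and $x\mapsto(3x+1)/(x-3)$ both preserve $B_+$. On $B_+$ they act as $(\infty\,1\,0\,22)(3\,12\,15\,21)$ and $(\infty\,3)(0\,15)(1\,21)(22\,12)$. Together they generate a $D_8$ that is regular on $B_+$. Equivalently, Lemma~\ref{lem3.8} gives $\P\S\L(2,23)_B\cong D_8$ with $k=2c=8$, and by Lemma~\ref{lem2.6}(ii)(a) every orbit of such a $D_8$ has length $8$, so $B$ is a single orbit.

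Hence $\P\G\L(2,23)$ is flag-transitive on a non-complete simple $5$-$(24,8,2)$ design, and no case analysis can exclude it. A correct statement must be restricted. What Lemma~\ref{lem2.18} actually uses is only the case $\gcd(k,6)=1$ (together with $k\mid v$).

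If you aim for such a restricted version, your $\A\G\L(d,2)$ step also needs repair. Transitivity of $G_B$ on the points of $B$ says nothing about which triples of $B$ have their coplanar fourth point inside $B$. So the ``closed or plane-free'' dichotomy is unjustified. A minor point: $\A_7$ on $15$ points and $\L_2(11)$ on $11$ points are only $2$-transitive, though listing them is harmless.
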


The following is a simple consequence of block-transitivity which will be used repeatedly in this paper, and is therefore stated here.

\begin{lemma}\label{lem2.16}
Let $\mathcal{D}=(\mathcal{P},\mathcal{B})$ be a design with $b$ blocks.
If $G\leq\Aut(\mathcal{D})$ acts block-transitively on $\mathcal{B}$, then, for every $B\in\mathcal{B}$, we have $b=[G:G_B]$.
In particular, $b\mid |G|$.
\end{lemma}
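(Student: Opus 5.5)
The claim is Lemma~\ref{lem2.16}, and I would prove it directly from the orbit--stabilizer theorem. The plan is to view $G$ as acting on the block set $\mathcal{B}$, which it can do because every automorphism permutes blocks. Block-transitivity says exactly that this induced action of $G$ on $\mathcal{B}$ has a single orbit.

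The steps, in order, are as follows. First, fix an arbitrary block $B\in\mathcal{B}$ and let $G_B$ be its setwise stabilizer, which is a subgroup of $G$. Second, define the map
\[
G/G_B\to\mathcal{B},\qquad gG_B\mapsto B^g.
\]
This map is well defined and injective, because $B^g=B^h$ holds if and only if $gh^{-1}\in G_B$. It is surjective because the $G$-orbit of $B$ is all of $\mathcal{B}$. Third, comparing cardinalities gives $b=|\mathcal{B}|=[G:G_B]$.

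For the final assertion, $G$ is finite, being a subgroup of the symmetric group on $\mathcal{P}$. Lagrange's theorem then gives $|G|=[G:G_B]\,|G_B|=b\,|G_B|$, and therefore $b\mid |G|$.

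None of these steps is a genuine obstacle. The only point that needs a word of justification is that $G$ really induces an action on $\mathcal{B}$, and that holds by the definition of an automorphism. The rest is the standard orbit--stabilizer count.
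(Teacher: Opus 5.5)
Your proof is correct and is exactly the orbit--stabilizer argument the paper has in mind; the paper states this lemma without proof, as an immediate consequence of block-transitivity. One cosmetic point: with the exponential notation $B^g$, the condition $gh^{-1}\in G_B$ characterizes equality of right cosets $G_Bg=G_Bh$, so your map should read $G_Bg\mapsto B^g$. This does not affect the count, since left and right cosets are equinumerous.
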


\begin{lemma}\label{lem2.17}
Let $\mathcal{D}=(\mathcal{P},\mathcal{B})$ be a $5$-$(v,k,2)$ design, and let $S$ be a $5$-subset of $\mathcal{P}$.
Let $B_1$ and $B_2$ be the two blocks containing $S$.
If $H\leq\Aut(\mathcal{D})$ stabilizes $S$ setwise, then $H$ permutes $B_1$ and $B_2$.
Consequently, $B_1\cup B_2$ is $H$-invariant, $|B_1\cup B_2|\leq2k$, and $[H:H_{B_i}]\leq2$ for $i=1,2$.
\end{lemma}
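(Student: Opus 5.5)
The statement to prove is Lemma~\ref{lem2.17}. The argument is a direct consequence of the definition of a $5$-$(v,k,2)$ design together with the fact that automorphisms map blocks to blocks.

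First, since $\lambda=2$ and the design is simple, there are exactly two distinct blocks containing $S$, namely $B_1$ and $B_2$. Let $h\in H$. Since $h$ is an automorphism, $B_1^h$ and $B_2^h$ are blocks. Since $S\subseteq B_i$ and $S^h=S$, we get $S\subseteq B_i^h$. Hence $\{B_1^h,B_2^h\}\subseteq\{B_1,B_2\}$. Because $h$ is injective on blocks, $B_1^h\neq B_2^h$, so $h$ induces a permutation of the set $\{B_1,B_2\}$. This defines an action of $H$ on $\{B_1,B_2\}$, which is the first assertion.

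The consequences follow immediately.
\begin{enumerate}[(a)]
\item Each $h\in H$ maps $B_1\cup B_2$ to $B_1^h\cup B_2^h=B_1\cup B_2$, so $B_1\cup B_2$ is $H$-invariant.
\item Trivially $|B_1\cup B_2|\le |B_1|+|B_2|=2k$.
\item The $H$-orbit of $B_i$ lies in $\{B_1,B_2\}$, so it has length at most $2$. By the orbit--stabilizer theorem, $[H:H_{B_i}]=|B_i^H|\le 2$ for $i=1,2$.
\end{enumerate}

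There is no real obstacle here. The only point needing care is to invoke simplicity, so that $B_1\neq B_2$ and the set of blocks through $S$ has exactly two elements; this is what makes the orbit bound equal to $2$.
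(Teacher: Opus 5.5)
Your proof is correct and follows essentially the same argument as the paper: an element of $H$ fixes $S$ setwise, so it sends each block through $S$ to a block through $S$, and the orbit--stabilizer theorem then gives $[H:H_{B_i}]\le 2$. You also appeal explicitly to simplicity to get $B_1\neq B_2$; the paper leaves this implicit in its standing assumption that all designs are simple.
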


\begin{proof}
Since $H$ stabilizes $S$ setwise, for every $h\in H$ and $i\in\{1,2\}$, we have $S=h(S)\subseteq h(B_i)$.
As precisely two blocks contain $S$, it follows that $h(B_i)\in\{B_1,B_2\}$.
Thus $H$ permutes $B_1$ and $B_2$, and hence $B_1\cup B_2$ is $H$-invariant and $|B_1\cup B_2|\leq2k$.
Finally, the orbit of $B_i$ under $H$ has length at most $2$, so the orbit-stabilizer theorem gives $[H:H_{B_i}]=|B_i^H|\leq2$.
\end{proof}

\begin{lemma}\label{lem2.18}
Let $\mathcal{D}=(\mathcal{P},\mathcal{B})$ be a non-trivial $5$-$(v,k,2)$ design satisfying $k\mid v$, and let $G\leq\Aut(\mathcal{D})$ act block-transitively on $\mathcal{B}$.
Then $2\mid k$ or $3\mid k$.
\end{lemma}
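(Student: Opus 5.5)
Assume, for a contradiction, that $\gcd(k,6)=1$. The plan is to show that then $G_B$ is transitive on $B$ for a block $B$. Together with block-transitivity this makes $G$ flag-transitive, which Lemma~\ref{lem2.15} forbids. The tool for transitivity is the Camina--Gagen orbit lemma, Lemma~\ref{lem2.14}, applied to $G_B$ acting on $\Delta=B$, one prime $p\mid k$ at a time.

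First, by Theorem~\ref{th2.4}(i) with $t=5$, $G$ is point $2$-homogeneous, so $[G:G_{\{\alpha,\beta\}}]=\binom v2$ for any $2$-subset $\{\alpha,\beta\}$. Fix a block $B$ and distinct $\alpha,\beta\in B$, and let $x$ be the length of the $G_{\{\alpha,\beta\}}$-orbit of $B$; note that $x$ is a positive integer. Computing $[G:G_{B,\{\alpha,\beta\}}]$ in two ways, and using Lemma~\ref{lem2.16} together with the flag count $b\binom k2=\binom v2\lambda_2$, gives
\[
[G_B:G_{B,\{\alpha,\beta\}}]=\frac{\binom v2\, x}{b}=\frac{\binom k2\, x}{\lambda_2},\qquad
\lambda_2=\frac{2(v-2)(v-3)(v-4)}{(k-2)(k-3)(k-4)}
\]
by Lemma~\ref{lem2.1}(iv). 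Since $G_{B,\alpha\beta}$ has index at most $2$ in $G_{B,\{\alpha,\beta\}}$, any odd prime power dividing $[G_B:G_{B,\{\alpha,\beta\}}]$ also divides $[G_B:G_{B,\alpha\beta}]$.

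Next, let $p$ be a prime divisor of $k$ with $p^e\parallel k$. Since $\gcd(k,6)=1$, we have $p\ge5$. I will compare $p$-adic valuations in the displayed formula.
\begin{itemize}
\item[(a)] Because $p\mid k$ and $p\ge5$, $p$ divides none of $k-1,\dots,k-4$.
\item[(b)] Because $k\mid v$, also $p\mid v$, so $p$ divides none of $v-2,v-3,v-4$. With $p\neq2$, it follows that $\lambda_2$ has $p$-part $1$.
\item[(c)] Since $p$ is odd, $p^e\parallel\binom k2$.
\end{itemize}
Hence $p^e$ divides $\binom k2 x/\lambda_2=[G_B:G_{B,\{\alpha,\beta\}}]$, and therefore divides $[G_B:G_{B,\alpha\beta}]$ for all distinct $\alpha,\beta\in B$. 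Lemma~\ref{lem2.14}, applied to $G_B$ on $B$ with $p^e\parallel |B|=k$, then shows that every $G_B$-orbit on $B$ has length divisible by $p^e$.

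Running this over all prime divisors $p$ of $k$ (all of which are at least $5$), every $G_B$-orbit on $B$ has length divisible by $k=|B|$. So $G_B$ is transitive on $B$, and $G$ is flag-transitive on $\mathcal D$. This contradicts Lemma~\ref{lem2.15}, hence $2\mid k$ or $3\mid k$. The only delicate step is the valuation bookkeeping in (a)--(c): the integer $x$ and the passage from unordered to ordered pairs must not destroy $p$-divisibility. This is exactly where $p\ge5$ (from $\gcd(k,6)=1$) and $p$ odd are needed.
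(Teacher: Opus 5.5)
Your proof is correct and follows the paper's argument: for each $p^e\parallel k$, show $p^e\mid[G_B:G_{B\alpha\beta}]$ by computing a stabilizer index in two ways, apply Lemma~\ref{lem2.14} to $G_B$ acting on $B$ to obtain flag-transitivity, and contradict Lemma~\ref{lem2.15}. The only difference is in how the index is factored. The paper goes through the point stabilizer $G_\alpha$, using only point-transitivity and the formula for $b$, and gets $k\mid[G_B:G_{\alpha\beta B}]$ in one step from $\gcd\bigl(k,2(v-1)(v-2)(v-3)(v-4)\bigr)=1$; you go through $G_{\{\alpha,\beta\}}$ using $2$-homogeneity and $\lambda_2$, which also works (and your index-$2$ remark is unnecessary, since $[G_B:G_{B,\{\alpha,\beta\}}]$ divides $[G_B:G_{B,\alpha\beta}]$ outright).
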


\begin{proof}
Suppose, to the contrary, that $(k,6)=1$.
Let $B\in\mathcal{B}$ and let $\alpha,\beta\in B$ be distinct.
By Theorem~\ref{th2.4}, the block-transitivity of $G$ implies that $G$ is point-transitive.
Hence
\begin{equation*}
[G:G_{\alpha\beta B}]
=v[G_\alpha:G_{\alpha\beta B}]
=b[G_B:G_{\alpha\beta B}].
\end{equation*}

Together with the formula for $b$ in Lemma~\ref{lem2.1}, this gives
\begin{equation*}
2(v-1)(v-2)(v-3)(v-4)[G_B:G_{\alpha\beta B}]
=
k(k-1)(k-2)(k-3)(k-4)
[G_\alpha:G_{\alpha\beta B}].
\end{equation*}

Since $k\mid v$, we have $(k,v-i)=(k,i)$ for $1\leq i\leq4$.
As $(k,6)=1$, it follows that
\begin{equation*}
\bigl(k,2(v-1)(v-2)(v-3)(v-4)\bigr)=1.
\end{equation*}

Consequently,\(k\mid[G_B:G_{\alpha\beta B}]\)for all distinct $\alpha,\beta\in B$.

Let $p^e\parallel k$.
Then $p^e\parallel|B|$ and $p^e\mid[G_B:G_{\alpha\beta B}]$ for all distinct $\alpha,\beta\in B$.
By Lemma~\ref{lem2.14}, the length of every $G_B$-orbit on $B$ is divisible by $p^e$.
Applying this argument to every prime divisor of $k$, we obtain that every $G_B$-orbit on $B$ has length divisible by $k$.
Since $|B|=k$, the group $G_B$ is transitive on $B$.
Together with the block-transitivity of $G$, this implies that $G$ is flag-transitive, contrary to Lemma~\ref{lem2.15}.
Therefore, $2\mid k$ or $3\mid k$.
\end{proof}

\section{The Proof of the Main Theorem}
Let $\mathcal{D}=(\mathcal{P},\mathcal{B})$ be a non-trivial simple $5$-$(v,k,2)$ design satisfying $k\mid v$, and let
$G\leq\Aut(\mathcal{D})$ act block-transitively on $\mathcal{B}$.
By Theorem~\ref{th2.4}, the group $G$ is $2$-homogeneous on $\mathcal{P}$.

We now recall the classification of finite $2$-homogeneous permutation groups {\rm\cite{Kantor,Xu}}.
If $G$ is not $2$-transitive, then $v=q\equiv3\pmod4$ and $G\leq\A\Gamma\L(1,q)$, which is included in the affine case{\rm(A1)}.
If $G$ is $2$-transitive, then $G$ is either of affine type or of almost simple type.
We consider these two types separately.

\subsection{The affine case}
Suppose that $G$ is of affine type.
Then the point set $\mathcal{P}$ may be identified with the vector space $V=V(d,p)$, where $v=p^d$ for some prime $p$.
Since $k\mid v=p^d$ and $5<k<v$, we may write $k=p^i$, where $1\leq i<d$.
By Lemma~\ref{lem2.18}, we have $2\mid k$ or $3\mid k$, and hence $p\in\{2,3\}$.

Moreover,
\begin{equation*}
G=T\rtimes G_0\leq\A\G\L(d,p),
\end{equation*}
where $T$ is the regular elementary abelian translation group of $V$ and $G_0=G_{\mathbf{0}}$ is the stabilizer of the zero vector $\mathbf{0}$.
By the classification of finite affine $2$-homogeneous permutation groups {\rm\cite{Kantor,Xu}}, we may also identify $V(d,p)$ with $V(n,q)$, where $q=p^f$ and $d=nf$, and one of the following cases occurs:
\begin{enumerate}[{\rm(A}1{\rm)}]
\item $G\leq\A\Gamma\L(1,p^d)$;
\item $\S\L(n,q)\unlhd G_0$, where $n\geq2$;
\item $\Sp(2m,q)\unlhd G_0$, where $n=2m\geq4$;
\item $G_2(q)'\unlhd G_0$, where $n=6$ and $q$ is even;
\item $G_0\cong\A_6$ or $\A_7$, and $v=2^4$;
\item $G_0$ contains a normal subgroup isomorphic to $\S\L(2,3)$ or
$\S\L(2,5)$, and $v=3^4$;
\item $G_0$ has an extraspecial normal subgroup $E$ of order $2^5$,
$G_0/E$ is isomorphic to a subgroup of $\S_5$, and $v=3^4$;
\item $G_0\cong\S\L(2,13)$ and $v=3^6$.
\end{enumerate}

Throughout this subsection, we repeatedly use Lemma~\ref{lem2.1}, Corollary~\ref{cor2.3}, and the divisibility condition in Lemma~\ref{lem2.16}.
In particular,
\begin{equation*}
\lambda_4=\frac{2(v-4)}{k-4},\qquad
\lambda_3=\frac{2(v-3)(v-4)}{(k-3)(k-4)},\qquad
\lambda_2=\frac{2(v-2)(v-3)(v-4)}
{(k-2)(k-3)(k-4)}
\end{equation*}
are positive integers, and
\begin{equation*}
(k-3)(k-4)\leq2(v-4),\qquad
k<\sqrt{2v}+4,\qquad
b=\frac{2\binom{v}{5}}{\binom{k}{5}}\mid|G|.
\end{equation*}

We use Lemma~\ref{lem2.17} in the orbit arguments below and consider Cases~{\rm(A1)}--{\rm(A8)} separately.

\begin{lemma}\label{lem3.1}
The case {\rm(A1)} is impossible.
\end{lemma}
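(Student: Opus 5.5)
Assume $G\leq\A\Gamma\L(1,p^d)$ with $p\in\{2,3\}$, $v=p^d$ and $k=p^i$, $1\leq i<d$. Since $|\A\Gamma\L(1,p^d)|=p^d(p^d-1)d$, the plan is to compare this with $b=2\binom{v}{5}/\binom{k}{5}$ from Lemma~\ref{lem2.16}. The numerator of $b$ contains $(v-2)(v-3)(v-4)$, which is essentially cubic in $v$, while the denominator $\binom{k}{5}$ is bounded via Corollary~\ref{cor2.3} by $k<\sqrt{2v}+4$. So $b$ should grow like $v^{5/2}$, whereas $|G|\le v^2 d$, and this mismatch forces $v$ to be small.

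\textbf{Step 1 (bound on $b$).} Use $(k-3)(k-4)\le 2(v-4)$ together with $k(k-1)(k-2)\le (k-3)(k-4)\cdot c$ for suitable small constants $c$ when $k\geq 8$. More concretely,
\[
b=\frac{2v(v-1)(v-2)(v-3)(v-4)}{k(k-1)(k-2)(k-3)(k-4)}\ \ge\ \frac{v(v-1)(v-2)(v-3)}{k(k-1)(k-2)},
\]
with $k<\sqrt{2v}+4$. The required inequality $b\le v(v-1)d$ then reduces to roughly $(v-2)(v-3)\le d\,k^3\lesssim d(\sqrt{2v}+4)^3$. This holds only for small $v$: a crude estimate gives $v^{1/2}\lesssim 2\sqrt2\,d$, so $p^d\le$ a few hundred. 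It would sharpen to $p^d\le 3^7$ or similar.

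\textbf{Step 2 (finite check).} For $p=2$, $d\le$ about $10$; for $p=3$, $d\le$ about $6$. In each case $k=p^i$ satisfies $5<k$ and $(k-3)(k-4)\le 2(v-4)$, so the candidates are $k\in\{8,16,32\}$ when $p=2$ and $k\in\{9,27\}$ when $p=3$. For each pair, test integrality of $\lambda_4=2(v-4)/(k-4)$, $\lambda_3$, $\lambda_2$ and the condition $b\mid p^d(p^d-1)d$. For example, $(v,k)=(32,8)$ gives $\lambda_4=14$, $\lambda_3=2\cdot 29\cdot 28/20$, which is not an integer. Similarly $(64,8)$ gives $\lambda_3=2\cdot61\cdot60/20=366$, $\lambda_2=2\cdot 62\cdot 61\cdot 60/120=3782$, and here $b$ has the factor $61$, which does not divide $64\cdot 63\cdot 6$. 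In general the factors $v-2,v-3,v-4$ contribute primes that must divide $|G|$; since $(v-i,v(v-1))$ is small for $i=2,3,4$, this kills the remaining cases. A cleaner uniform argument takes a prime $r$ dividing $(v-2)(v-3)(v-4)$ with $r\nmid k(k-1)(k-2)(k-3)(k-4)\cdot d$. Then $r\mid b$ but $r\nmid |G|$, since $(v-j,v-1)\mid (j-1)$ and $(v-j,v)\mid j$.

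\textbf{Main obstacle.} The main difficulty is making Step 1 rigorous with clean constants so that the finite list is short, and then making sure the gcd argument produces the prime $r$ in every remaining case. If the gcd argument is awkward, I would fall back on explicit enumeration of the few $(p,d,i)$ triples. This handles both the $2$-transitive and the $2$-homogeneous subcases at once, since the only input used is $|G|\le |\A\Gamma\L(1,p^d)|$.
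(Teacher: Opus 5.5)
Your plan is correct and is essentially the paper's proof. The paper likewise combines $b\le|G|\le q(q-1)d$ with Corollary~\ref{cor2.3} to get $(q-2)(q-3)\le d\,k(k-1)(k-2)$, and makes this rigorous by assuming $q\ge 4096$ and using $k<(17/16)\sqrt{2q}$ to reach the contradiction $\sqrt q/\log_2 q<3.42$. In the remaining range it checks integrality of $\lambda_4,\lambda_3,\lambda_2,b$, which leaves only $k=8$ with $q\in\{64,128,1024,2048\}$; each of these is excluded simply by $b>q(q-1)d$, so your prime-divisor refinement is not needed.
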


\begin{proof}
Let $q=p^d=v$.
Since $b=2\binom{q}{5}/\binom{k}{5}\leq|G|\leq q(q-1)d$, Corollary~\ref{cor2.3} gives $(q-2)(q-3)\leq d\,k(k-1)(k-2)$.
Suppose that $q\geq4096$.
Then $(q-2)(q-3)\geq(255/256)^2q^2$ and $k<(17/16)\sqrt{2q}$.
Together with $d\leq\log_2q$, these inequalities yield
\begin{equation*}
\frac{\sqrt q}{\log_2q}
<
\frac{(17/16)^3\,2^{3/2}}{(255/256)^2}
<3.42.
\end{equation*}

However, the function $f(x)=\sqrt{x}/\log_2x$ is increasing for $x\geq4096$, and its value at $x=4096$ is $64/12>5.33$, a contradiction.

Hence $q<4096$.
Recall that $p\in\{2,3\}$ and $k=p^i$, where $1\leq i<d$.
Checking the integrality of $\lambda_4$, $\lambda_3$, $\lambda_2$ and $b$, we obtain the following four possibilities:
\begin{center}
\begin{tabular}{c|c|c|c}
\hline
$q$ & $k$ & $b$ & $q(q-1)d$\\
\hline
$64$ & $8$ & $272304$ & $24192$\\
$128$ & $8$ & $9448800$ & $113792$\\
$1024$ & $8$ & $331828071168$ & $10475520$\\
$2048$ & $8$ & $10670587975168$ & $46114816$\\
\hline
\end{tabular}
\end{center}

In each case, $b>q(q-1)d\geq|G|$, contrary to $b\mid|G|$.
Therefore, this case cannot occur.
\end{proof}

\begin{lemma}\label{lem3.2}
The case {\rm(A2)} is impossible.
\end{lemma}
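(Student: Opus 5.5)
We treat Case (A2) by exploiting the fact that $\S\L(n,q)\unlhd G_0$ with $n\geq2$ makes $G$ act transitively on affine subspaces of each dimension. This action is strong enough that a block $B$ interacts badly with the affine lines. Recall $v=q^n=p^d$ with $p\in\{2,3\}$ and $k=p^i$, $1\le i<d$. By Corollary~\ref{cor2.3}, $k<\sqrt{2v}+4$. We also have $|G|\le q^n|\Gamma\L(n,q)|$ together with $b\mid |G|$ (Lemma~\ref{lem2.16}), and the integrality of $\lambda_2,\lambda_3,\lambda_4$.

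The first step is arithmetic. Since $k=p^i$ and $v=p^d$, we have $k-j\mid 2(v-j)$ with $j=4$ from $\lambda_4$, so $p^i-4\mid 2(p^d-4)$. Write $d=si+r$ with $0\le r<i$. Reducing $p^d$ modulo $p^i-4$ gives $p^d\equiv 4^s p^r$, hence $p^i-4\mid 2(4^sp^r-4)$. Combined with the analogous conditions for $k-3$ and $k-2$, this forces strong restrictions. For $p=3$, the factor $k-3=3(3^{i-1}-1)$ must divide $2(v-3)(v-4)/(k-4)$, and $k-2\mid\ldots$. For $p=2$, $k-4=4(2^{i-2}-1)$ and $k-2=2(2^{i-1}-1)$. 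I expect that, as in Lemma~\ref{lem3.1}, these leave only a short list of $(v,k)$, mostly with $k=8$ or $k=9$ and specific $d$. However, $|G|$ is now of size about $q^{n^2+n}$, so the pure counting bound $b\le |G|$ is no longer contradictory for large $n$. A structural argument is therefore needed.

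The main step is geometric, using Lemma~\ref{lem2.17}. Take five points of an affine line when $q\geq5$ (or, for small $q$, five points in a small affine subspace $U$ of dimension $m$ with $q^m\ge5$). The setwise stabilizer $H$ of this $5$-set in $G$ is large, since it contains the pointwise stabilizer of $U$ in $\S\L(n,q)$ together with translations fixing $U$ pointwise. This subgroup is transitive on $V\setminus U$, or at least has orbits of size about $q^n-q^m$ outside $U$. By Lemma~\ref{lem2.17}, $B_1\cup B_2$ is $H$-invariant, so it is a union of $H$-orbits of total size at most $2k<2\sqrt{2v}+8$. It must therefore lie inside $U$ plus small orbits, which forces $B_1\cup B_2\subseteq U$ and hence $k\le q^m$. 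Choosing $m$ minimal with $q^m\ge5$, this contradicts $k\mid v$ together with the $\lambda_4$ constraint. Alternatively, it shows that each block is itself a subspace, and then a count of subspaces containing five general points gives $\lambda\ne2$.

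I expect the main obstacle to be the small-field cases $q\in\{2,3,4\}$. There, five points need not lie on a line, and a $5$-set spanning a larger affine subspace has a smaller stabilizer. The hardest subcase is five points spanning a $3$- or $4$-dimensional affine subspace over $\GF(2)$, where orbit sizes of $H$ outside the span are only about $v/2^{\dim}$. I would handle these by choosing the $5$-set inside a $3$-dimensional subspace (for $p=2$, five points of an $8$-point affine $3$-space). The stabilizer then acts on $V\setminus U$ with orbits of length $2^d-8$, which exceeds $2k$ unless $d\le5$. The few remaining $v\le 3^4$ or $2^5$ can be finished by the arithmetic table from the first step.
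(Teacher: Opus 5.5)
Where $\dim U\le n-2$, your argument is the paper's: the pointwise stabilizer of $U$ in $\S\L(n,q)$ is transitive on $V\setminus U$, Lemma~\ref{lem2.17} applies, and the affine-span contradiction follows. The gap is your claim that this stabilizer is transitive on $V\setminus U$, ``or at least has orbits of size about $q^n-q^m$''. That is false once $\dim U=n-1$, and you never isolate the case where this happens for every large $q$, namely $n=2$, $q\ge8$. There $U$ is a line $L$ through $\mathbf 0$, and $\S\L(2,q)_{(L)}$ is the transvection group of order $q$. Its orbits on $V\setminus L$ are the $q-1$ lines parallel to $L$, each of length only $q$. (The ``translations fixing $U$ pointwise'' you add are trivial, since a non-zero translation fixes no point.) So Lemma~\ref{lem2.17} gives $B_1\cup B_2\subseteq L$ only when $q+5>2k$, which you never check. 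Your conclusion that only $v\le3^4$ or $2^5$ remain is therefore unjustified. The case $n=2$ is an infinite family, and integrality alone does not remove it: for example, $(v,k)=(q^2,8)$ with $q=2^f$, $f$ odd, passes every integrality test.

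The paper treats $n=2$, $q\ge9$ separately. Using the transvection group (for even $q$, its subgroup of index at most $2$ fixing both blocks), it shows that every block lies in a union of at most two, respectively three, parallel lines. It then takes five points on the parabola $\{(t,t^2)\}$ that no such union can contain; in characteristic $2$ the abscissae are chosen $\mathbb F_2$-independent, so that the ten slopes $t+u$ are distinct. The pair $(q,n)=(8,2)$ goes to the finite list, where $(64,8)$ fails because $31\cdot61\mid b$.

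Your own approach can also be repaired using the correct orbit length. If $B_1\cup B_2$ meets $V\setminus L$, it contains $S$ and a whole parallel line, so $2k\ge q+5$. Since $k=p^i<\sqrt2\,q+4$, this forces $k=q$; otherwise both blocks lie in $L$ and the affine-span argument finishes. For $k=q$, integrality of $\lambda_4=2(q+4)+24/(q-4)$ gives $q\in\{8,16\}$. Then $q=16$ fails integrality of $\lambda_3$, and $q=8$ fails $b\mid|G|$.

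A smaller point: the final contradiction must come from the affine-span argument, as in your ``alternatively''. Every block lies in an affine $m$-flat, while some $5$-set spans a larger flat. Your primary claim that ``$k\le q^m$ contradicts $k\mid v$ together with $\lambda_4$'' is false in general: for $(q,n,k)=(8,3,8)$ one has $\lambda_4=254$.
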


\begin{proof}
Let $v=q^n$, where $q$ is a power of $2$ or $3$.
Since $G_0$ normalizes $\S\L(n,q)$ in its natural representation, we have $G\leq\A\Gamma\L(n,q)$.
Thus $G$ preserves affine dimension and parallelism over $\mathbb F_q$.
First consider the following cases:
\begin{center}
\begin{tabular}{c|c|c}
\hline
Parameter range & $\dim E$ & Orbit length on $V\setminus E$\\
\hline
$q\geq8,\ n\geq3$ & $1$ & $q^n-q$\\
$q=3,4,\ n\geq4$ & $2$ & $q^n-q^2$\\
$q=2,\ n\geq5$ & $3$ & $q^n-8$\\
\hline
\end{tabular}
\end{center}

In each case, choose a subspace $E$ of the indicated dimension and a $5$-subset $S\subseteq E$.
The pointwise stabilizer of $E$ in $\S\L(n,q)$ is transitive on $V\setminus E$.
By Corollary~\ref{cor2.3}, its orbit length exceeds $2k$.
Lemma~\ref{lem2.17} therefore implies that both blocks containing $S$ lie in $E$.
By block-transitivity, every block lies in an affine subspace of dimension $\dim E$.
But some $5$-subset of $V$ has affine span of larger dimension and thus lies in no block, a contradiction.

Next suppose that $n=2$ and $q\geq9$.
Let $L$ be a line through $0$, choose a $5$-subset $S\subseteq L$, and let $U$ be the transvection group fixing $L$ pointwise.
Then $U\cong(\mathbb F_q,+)$, and its orbits on $V\setminus L$ are the affine lines parallel to $L$, each of length $q$.

If $q$ is odd, then $|U|$ is odd, so $U$ fixes each of the two blocks containing $S$.
Since $k<\sqrt{2}q+4<2q$, each block lies in the union of at most two parallel lines.
By block-transitivity, this holds for every block.
However, each affine line meets the parabola $\{(t,t^2):t\in\mathbb F_q\}$ in at most two points.
Thus five points on this parabola cannot lie in any block, a contradiction.

If $q$ is even, then $q=2^e\geq16$.
Let $K$ be the kernel of the action of $U$ on the two blocks containing $S$.
Then $[U:K]\leq2$, and every $K$-orbit on $V\setminus L$ has length at least $q/2$.
If either block meets three such orbits, then
\begin{equation*}
k\geq5+\frac{3q}{2}>\sqrt{2}q+4,
\end{equation*}
contrary to Corollary~\ref{cor2.3}.
Hence each block meets at most two $K$-orbits outside $L$.
Each orbit lies in a line parallel to $L$, so both blocks lie in unions of at most three parallel lines.
By block-transitivity, this holds for every block.

Now choose
\begin{equation*}
S=\{(t,t^2):t\in\{0,\alpha_1,\alpha_2,\alpha_3,\alpha_4\}\},
\end{equation*}
where $\alpha_1,\ldots,\alpha_4\in\mathbb F_q$ are linearly independent over $\mathbb F_2$.
The slope between $(t,t^2)$ and $(u,u^2)$ is $t+u$.
These ten slopes are distinct, so three parallel lines contain at most four points of $S$.
Thus $S$ lies in no block, a contradiction.

It remains to consider $(q,n)\in\{(2,3),(2,4),(3,2),(3,3),(4,2),(4,3),(8,2)\}$.
The condition $k\mid v$ and the integrality of the derived parameters leave only $(v,k)=(64,8)$, arising from$(q,n)=(4,3)$ or $(8,2)$.
Then $b=272304=2^4\cdot3^2\cdot31\cdot61$, whereas $|G|$ divides $64\cdot2|\G\L(3,4)|$ or $64\cdot3|\G\L(2,8)|$, respectively.
Neither number is divisible by $31$ or $61$, so $b\nmid|G|$, a contradiction.
This excludes Case~{\rm(A2)}.
\end{proof}

\begin{lemma}\label{lem3.3}
The case {\rm(A3)} is impossible.
\end{lemma}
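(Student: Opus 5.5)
We follow the pattern of Lemma~\ref{lem3.2}. In Case~(A3) we have $v=q^{n}$ with $n=2m\geq4$, and $q$ is a power of $2$ or $3$ because $p\in\{2,3\}$. Also $\Sp(2m,q)\unlhd G_0\leq\Gamma\L(n,q)$, so $G$ again preserves $\mathbb F_q$-affine subspaces and parallelism. The plan is to find an $\mathbb F_q$-subspace $E$ containing a $5$-subset $S$ such that the pointwise stabilizer of $E$ in $\Sp(2m,q)$ has all orbits on $V\setminus E$ longer than $2k$. Lemma~\ref{lem2.17} then forces both blocks through $S$ into $E$. By block-transitivity, every block lies in a translate of a subspace of dimension $\dim E$. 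Since $n\geq4>\dim E$, we can choose a $5$-subset whose affine span has larger dimension; it lies in no block, a contradiction.

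The natural choice of $E$ is a totally isotropic subspace of dimension $1$, $2$ or $3$, depending on $q$, mirroring the table in Lemma~\ref{lem3.2}. For $q\geq8$ take $E=\langle e_1\rangle$, which has $q\geq5$ points. For $q=3,4$ take $E=\langle e_1,e_2\rangle$, totally isotropic, which needs $m\geq2$. For $q=2$ take a totally isotropic $3$-space, which needs $m\geq3$.

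The key computation is the orbit structure of the pointwise stabilizer $P$ of $E$ on $V\setminus E$. Using a symplectic basis, $P$ contains the unipotent radical of the parabolic subgroup together with $\Sp(E^\perp/E)$. I expect $P$ to be transitive on $E^\perp\setminus E$, with orbit length $q^{n-\dim E}-q^{\dim E}$ or a union of cosets of $E$. On $V\setminus E^\perp$, the orbits should be determined by the linear functional $x\mapsto (x,\cdot)|_E$, which gives orbits of length about $q^{n-\dim E}$. Each orbit length is at least on the order of $q^{n-\dim E}\geq q^{m}$. Comparing with $2k<2\sqrt{2}q^{n/2}+8$ from Corollary~\ref{cor2.3}, all orbits exceed $2k$ except for finitely many small pairs $(q,m)$.

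The main obstacle is this orbit bound in the borderline case $\dim E=m$ with $q$ small, for example $q=2$ with $m=3$. There $q^{n-\dim E}=q^m$ is comparable to $2k$, so we need to use $k=p^i\mid v$ together with the integrality of $\lambda_2,\lambda_3,\lambda_4$ to sharpen the bound on $k$. As in Lemma~\ref{lem3.1}, we would list the admissible $(v,k)$ for the small cases $v\in\{2^4,2^6,3^4,4^4,\dots\}$ directly; I expect essentially only $(64,8)$ to survive. We then check $b\nmid|G|$: $b=272304$ is divisible by $31$ and $61$, while $|G|$ divides $64\cdot|\Gamma\Sp(6,2)|$ or $64\cdot|\Gamma\Sp(4,4)|\cdot|Z|$, and neither is divisible by $61$. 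This excludes Case~(A3).
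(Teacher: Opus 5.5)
Your proposal is correct and follows essentially the paper's route. The pointwise stabilizer in $\Sp(2m,q)$ of a small subspace $E$ has all orbits on $V\setminus E$ longer than $2k$, Lemma~\ref{lem2.17} then traps both blocks through $S$ inside $E$, and affine dimension gives the contradiction; the residual pairs $(q,m)\in\{(2,2),(2,3),(2,4),(3,2),(3,3),(4,2)\}$ are then eliminated arithmetically, leaving only $(v,k)=(64,8)$, which $61\mid b$ kills.

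The differences are minor:
\begin{itemize}
\item The paper takes a non-degenerate plane for $q=3,4$ and a $3$-space with one-dimensional radical for $q=2$. Your totally isotropic choices, with orbit lengths $q^{2m-r}-q^r$ and $q^{2m-r}$, work equally well.
\item For $(q,m)=(3,3)$ your bound $72$ does not beat the generic bound $2k<84.4$. You need $k\in\{9,27\}$ there, or you must treat this pair as residual, as the paper does.
\item In Case~(A3), $v=64$ arises only from $\Sp(6,2)$, since $\Sp(4,4)$ acts on $4^4=256$ points. Your $\Gamma\Sp(4,4)$ alternative is therefore spurious, though harmless.
\end{itemize}
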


\begin{proof}
Let $(V,\beta)$ be the natural $2m$-dimensional symplectic space over $\mathbb F_q$, and put $L=\Sp(V)\unlhd G_0$.
Thus $v=q^{2m}$.
By Corollary~\ref{cor2.3}, we have $k<\sqrt{2}\,q^m+4$.

We first consider the following parameter ranges.
In each case, choose a subspace $E$ as indicated, and let $\ell(E)$ denote the minimum length of an orbit of the pointwise stabilizer $L_{(E)}$ on $V\setminus E$:
\begin{center}
\begin{tabular}{c|c|c}
\hline
Parameter range & $E$ & $\ell(E)$\\
\hline
$q\geq8,\ m\geq2$ & $\langle x\rangle,\quad x\neq0$ & $q^{2m-1}-q$\\
$q=3,\ m\geq4$ & $\text{a non-degenerate plane}$ & $q^{2m-2}-1$\\
$q=4,\ m\geq3$ & $\text{a non-degenerate plane}$ & $q^{2m-2}-1$\\
$q=2,\ m\geq5$ & $\text{a \(3\)-space with }\dim(E\cap E^\perp)=1$ & $2^{2m-3}-2$\\
\hline
\end{tabular}
\end{center}

Next, we explain how the orbit lengths in the last column are obtained.
Suppose first that $E=\langle x\rangle$.
By Witt's lemma \cite{KleidmanLiebeck}, $L_{(E)}=L_x$ is transitive on $x^\perp\setminus\langle x\rangle$, which has size $q^{2m-1}-q$, and is transitive on each set $\{y\in V\mid\beta(x,y)=c\}$, where $c\in\mathbb F_q^\times$; each such set has size $q^{2m-1}$.
This proves the first orbit bound.

If $E$ is a non-degenerate plane, then $V=E\perp E^\perp$ and $L_{(E)}\cong\Sp(E^\perp)$.
Every vector in $V\setminus E$ has the form $e+w$, where $e\in E$ and $0\neq w\in E^\perp$.
Since $\Sp(E^\perp)$ is transitive on the nonzero vectors of $E^\perp$, every such orbit has length $q^{2m-2}-1$.

Finally, suppose that $q=2$.
Choose a non-degenerate plane $U$ and a nonzero vector $x\in U^\perp$, and set $E=U\perp\langle x\rangle$.
Then $E\cap E^\perp=\langle x\rangle$.
Writing $W=U^\perp$, we have $L_{(E)}\cong\Sp(W)_x$, where $\dim W=2m-2$.
Applying the one-dimensional calculation above to $W$ shows that every $L_{(E)}$-orbit on $V\setminus E$ has length at least $2^{2m-3}-2$.

In each of the four parameter ranges, the corresponding lower bound satisfies $\ell(E)>3q^m+8$.
Indeed, the weakest cases are $(q,m)=(8,2),(3,4),(4,3)$, and $(2,5)$, respectively.
Since $\sqrt2<3/2$, Corollary~\ref{cor2.3} gives $2k<3q^m+8<\ell(E)$.

Choose a $5$-subset $S$ of $E$, and let $B_1$ and $B_2$ be the two blocks containing $S$.
The group $L_{(E)}$ fixes $S$ pointwise, so Lemma~\ref{lem2.17} shows that $B_1\cup B_2$ is $L_{(E)}$-invariant and has size at most $2k$.
Since every $L_{(E)}$-orbit on $V\setminus E$ has length greater than $2k$, such an orbit cannot meet $B_1\cup B_2$.
Hence $B_1,B_2\subseteq E$.

Under the identification used in Case~{\rm(A3)}, the elements of $G$ map affine $\mathbb F_q$-subspaces to affine $\mathbb F_q$-subspaces of the same dimension.
As a consequence of block-transitivity, it follows that every block is contained in an affine subspace of dimension at most $3$.
However, since $2m\geq4$, the space $V$ contains five affinely independent points.
Their affine span has dimension $4$, so they cannot be contained in any block.
This contradicts the definition of $5$-design.

It remains to consider $(q,m)\in\{(2,2),(2,3),(2,4),(3,2),(3,3),(4,2)\}$.
For these six pairs, checking the divisors $k$ of $v=q^{2m}$ with $5<k<v$ against the integrality conditions
$\lambda_s=2\binom{v-s}{5-s}/\binom{k-s}{5-s}\in\mathbb Z$, $0\leq s\leq4$, leaves only $(q,m,v,k)=(2,3,64,8)$.

For this remaining possibility,$b=272304=2^4\cdot3^2\cdot31\cdot61$.
But $G\leq\A\G\L(6,2)$ and
\begin{equation*}
|\A\G\L(6,2)|
=2^{21}\prod_{j=1}^{6}(2^j-1)
\end{equation*}
is not divisible by $61$.
Thus $b\nmid|G|$, contrary to Lemma~\ref{lem2.16}.
Thus Case~{\rm(A3)} is impossible.
\end{proof}

\begin{lemma}\label{lem3.4}
The case {\rm(A4)} is impossible.
\end{lemma}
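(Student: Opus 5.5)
In Case (A4) we have $v=q^6$ with $q$ even, so $p=2$ and $k=2^i$ with $1\le i<6f$, where $q=2^f$. Moreover $G_2(q)'\unlhd G_0\le \Gamma\L(6,q)$, and $G_2(q)$ is contained in $\Sp(6,q)$ in its natural $6$-dimensional module. The plan follows the proof of Lemma~\ref{lem3.3}: find a subspace $E$ whose pointwise stabilizer has only long orbits off $E$, force every block into a small affine subspace, and then dispose of the finitely many remaining small cases by arithmetic.

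\textbf{Generic case.} Take $q\ge4$ and $E=\langle x\rangle$ with $x\ne0$. The group $G_2(q)$ is transitive on the nonzero vectors of $V$, so $|G_2(q)_x|=|G_2(q)|/(q^6-1)=q^6(q^2-1)$. Its structure is $G_2(q)_x=q^{2+1+2}\rtimes \S\L(2,q)$ (a Levi complement of a maximal parabolic). The orbits of $G_2(q)_x$ on $V$ are governed by the filtration $\langle x\rangle\subset x^{\perp_\times}\subset x^\perp\subset V$, where $x^{\perp_\times}$ is the $3$-space of vectors whose octonion product with $x$ vanishes. One checks that every orbit outside $\langle x\rangle$ has length at least about $q^2-q$, and in fact roughly $q^3$ or more on the larger layers. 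We need $\ell(E)>2k$, and Corollary~\ref{cor2.3} gives $2k<2\sqrt2q^3+8$. Orbits of length only $\approx q^2$ inside the $3$-space $x^{\perp_\times}$ may be too short. So we take $E=x^{\perp_\times}$ instead, or argue in layers: a block through $S\subseteq\langle x\rangle$ (possible once $q\ge8$, since then $|\langle x\rangle|\ge 5$) can meet only orbits of total size $\le 2k$. This confines both blocks to a proper subspace $E'$ of dimension at most $5$, provided every orbit outside $x^\perp$ has length $\ge q^5>2k$; this holds because $G_2(q)_x$ is transitive on each set $\{y:\beta(x,y)=c\}$, $c\neq0$, by the $\Sp$-type calculation. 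By block-transitivity every block then lies in an affine hyperplane. The contradiction comes from choosing $5$ points, for instance in $\langle x\rangle$ together with points off $x^\perp$ relative to the corresponding hyperplane. Cleaner: since blocks are contained in affine hyperplanes, a $5$-set spanning affine dimension $4$ need not be excluded, so a finer bound is needed. I would sharpen the confinement to dimension $\le3$ by showing that orbits in $x^\perp\setminus x^{\perp_\times}$ have length $q^3-q^2$ or so, which exceeds $2k$ only for large $q$. Comparing with $2\sqrt2 q^3$ fails, so instead use $E=$ a non-degenerate plane-like subspace whose stabilizer contains $\S\L(3,q)$ or $\S\U(3,q)$, i.e.\ $G_2(q)_{(E)}\supseteq \S\L(3,q)$ fixing a complement, transitive on nonzero vectors of a $3$-space, giving orbits of length $\ge q^3-1$. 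I expect this orbit-length bookkeeping to be the main obstacle.

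\textbf{Small cases and arithmetic.} The cleanest fallback avoids geometry entirely. We have $|G|\le q^6\cdot|G_2(q)|\cdot(q-1)\cdot f$, and $b=2\binom{q^6}{5}/\binom{k}{5}\ge 2\binom{q^6}{5}/\binom{\lfloor\sqrt2q^3+4\rfloor}{5}$, which is roughly $q^{15}\cdot c$. Since $|G|\approx q^{21}$, size alone does not decide the matter, so use primes instead. The number $b$ contains the factor $(q^6-1)(q^6-2)(q^6-3)(q^6-4)$ divided by the small factor $(k-1)\cdots(k-4)$ with $k=2^i$. A primitive prime divisor of $2^{6f}-2^{j}$-type terms, say of $q^6-4=4(q^6/4-1)$, should yield a prime dividing $b$ but not $|G|$, whose prime divisors all divide $\prod(q^{j}-1)$ for $j\in\{1,2,6\}$, together with $2$ and $f$. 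So the concrete plan is: first use the integrality of $\lambda_4=2(q^6-4)/(k-4)$, $\lambda_3$ and $\lambda_2$ to restrict $k$ (e.g.\ $k-4=2^i-4\mid 2(q^6-4)$ forces $i$ small, likely $k=8$). Then exhibit a prime $r\mid b$ coprime to $|\A\Gamma\L(6,q)|$, such as a prime divisor of $q^6-3$ or $(q^6-2)/2=2^{6f-1}-1$. Zsigmondy's theorem gives such a primitive prime divisor $r$ of $2^{6f-1}-1$ not dividing $2^j-1$ for $j<6f-1$, hence $r\nmid |G|$. This contradicts Lemma~\ref{lem2.16}. The main check will be that $k-2$ does not absorb $r$, which holds since $k<\sqrt2q^3+4<r$ is not guaranteed. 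If needed, use both $q^6-2$ and $q^6-3$, noting that $\gcd$ considerations with $k=2^i$ make $k-1,k-3$ odd and small.
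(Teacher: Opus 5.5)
There is a genuine gap. Neither of your two routes reaches a contradiction, and you abandon the geometric one because of a wrong orbit estimate.

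\textbf{The geometric route.} Take $q\ge8$, $E=\langle x\rangle$, $S\subseteq E$ and $H=G_2(q)_x$. You guess that the $H$-orbits on $x^\perp\setminus x^{\perp_\times}$ have length about $q^3-q^2$, and conclude that comparison with $2\sqrt2q^3$ fails. In fact these orbits are much longer. The parabolic $P=G_2(q)_{\langle x\rangle}$ has four orbits on projective points, of lengths $1$, $q(q+1)$, $q^3(q+1)$ and $q^5$. These are the points of the split Cayley hexagon at distance $0,2,4,6$ from $\langle x\rangle$. The first two orbits together are exactly the projective points of the $3$-space $U=x^{\perp_\times}$. Since $H\unlhd P$ and $[P:H]\mid q-1$, which is coprime to $q^3(q+1)$ and to $q^5$, the group $H$ is transitive on the last two orbits. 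Hence every $H$-orbit on $V\setminus U$ has length at least $|\langle y\rangle^H|\ge q^3(q+1)>2\sqrt2q^3+8>2k$. Lemma~\ref{lem2.17} then puts both blocks through $S$ inside $U$. Block-transitivity puts every block in an affine $3$-space, and five affinely independent points give the contradiction. Confinement to dimension $3$ is all that is needed, so no finer subspace is required. Your proposed substitute does not exist anyway. On the $6$-dimensional module, $\S\L(3,q)$ acts as $W\oplus W^*$ and $\S\U(3,q)$ as the natural $\GF(q^2)$-module. Neither fixes a nonzero vector, so neither lies in a pointwise stabilizer $G_2(q)_{(E)}$ with $E\neq0$.

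\textbf{The arithmetic fallback.} This does not close the gap either. Write $k=2^i$. Integrality of $\lambda_4$ only gives $i-2\mid 6f-2$, which does not force $k=8$; for instance $k=16$ always passes this test. So you have no finite list of block sizes for general $q$. More seriously, a primitive prime divisor $r$ of $2^{6f-2}-1$ or $2^{6f-1}-1$ divides $b$ only if it is not absorbed by $k-3=2^i-3$, and you concede this is not guaranteed. It genuinely happens. For $q=2$ and $k=8$, the primitive prime divisor $5$ of $2^4-1$ (coming from $q^6-4=60$) is cancelled by $k-3=5$, and indeed $b=2^4\cdot3^2\cdot31\cdot61$ has no factor $5$. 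So your proposal gives no proof for $q\ge8$ and leaves $q=4$ to an unspecified check. The paper handles $q=2$ with the prime $61\nmid|\A\G\L(6,2)|$. It handles $q=4$ by showing that no divisor $k$ of $4096$ makes $\lambda_4$, $\lambda_3$ and $\lambda_2$ all integral.
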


\begin{proof}
Let $q=2^e$ and $v=q^6$.
Suppose first that $q\geq8$.
Choose a nonzero vector $x\in V$, put $E=\langle x\rangle$, and choose a $5$-subset $S$ of $E$.
Let $B_1$ and $B_2$ be the two blocks containing $S$.
Put $L=G_2(q)$, $P=L_E$, and $H=L_x$.
Since $q\geq8$, we have $L=G_2(q)'\leq G_0$.
Moreover, $H\unlhd P$ and $[P:H]\mid q-1$, because $H$ is
the kernel of the action of $P$ on the one-dimensional space $E$.

In the natural action on the projective points of $V$, the group
$P$ has four orbits, namely $\{E\}$ and three further orbits
$\Omega_1,\Omega_2,\Omega_3$ of lengths
\begin{equation*}
q(q+1),\qquad q^3(q+1),\qquad q^5,
\end{equation*}
respectively; see \cite{DempwolffKantor}.
In the standard embedding of the split Cayley hexagon,
the $q+1$ hexagon lines through $E$ lie in a projective plane
and form all lines of that plane through $E$ \cite{ThasVanMaldeghem}.
Their points other than $E$ form $\Omega_1$.
Hence $\{E\}\cup\Omega_1$ is the set of all one-dimensional
subspaces of a three-dimensional $\mathbb F_q$-subspace $U$.

Since $q$ is even, each of the three numbers
$q(q+1)$, $q^3(q+1)$, and $q^5$ is coprime to $q-1$.
As $H\unlhd P$, the number of $H$-orbits on each $\Omega_i$
divides both $[P:H]$ and $|\Omega_i|$.
Consequently, $H$ is transitive on each $\Omega_i$.

For any $y\in V\setminus U$, the projective point $\langle y\rangle$
belongs to $\Omega_2$ or $\Omega_3$.
The map $z\mapsto\langle z\rangle$ sends the $H$-orbit of $y$
onto the $H$-orbit of $\langle y\rangle$.
Hence
\begin{equation*}
|y^H|
\geq |\langle y\rangle^H|
\geq \min\{q^3(q+1),q^5\}
=q^3(q+1).
\end{equation*}
By Corollary~\ref{cor2.3}, for $q\geq8$ we have
\begin{equation*}
q^3(q+1)>2\sqrt{2}\,q^3+8>2k.
\end{equation*}

Since $H$ fixes $E$ pointwise, it fixes $S$ pointwise.
By Lemma~\ref{lem2.17}, the set $B_1\cup B_2$ is $H$-invariant and has size at most $2k$.
Thus it cannot contain any point of $V\setminus U$, and therefore$B_1,B_2\subseteq U$.

Under the natural embedding $G\leq\mathrm{A\Gamma L}(6,q)$ in Case~{\rm(A4)}, elements of $G$ preserve affine $\mathbb F_q$-dimension.
Due to block-transitivity, every block is necessarily contained in an affine $3$-dimensional subspace.
This is impossible, since five affinely independent points cannot be contained in such a subspace.

If $q=2$, then $v=64$, and the parameter conditions leave only $k=8$.
In this case, $b=2^4\cdot3^2\cdot31\cdot61$, but $G\leq\A\G\L(6,2)$ and $61\nmid|\A\G\L(6,2)|$.
Thus $b\nmid|G|$, a contradiction.
If $q=4$, then $v=4096$.
A direct check using $k\mid v$ and the integrality of $\lambda_4$, $\lambda_3$ and $\lambda_2$ shows that no possible value of $k$ exists.
Therefore, this case cannot occur.
\end{proof}

\begin{lemma}\label{lem3.5}
Cases {\rm(A5)}--{\rm(A8)} are impossible.
\end{lemma}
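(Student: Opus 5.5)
Cases (A5)--(A8) involve only the three values $v=2^4$, $v=3^4$ and $v=3^6$, so the plan is purely arithmetic. In each case I will list the admissible block sizes and then show that none survives the parameter conditions.

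First, the candidates for $k$. Since $k\mid v$, $5<k<v$ and $k=p^i$, the possibilities are:
\begin{itemize}
\item for $v=16$, only $k=8$;
\item for $v=81$, only $k=9$ or $k=27$;
\item for $v=729$, only $k=9,27,81,243$.
\end{itemize}

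Second, apply Corollary~\ref{cor2.3}. The inequality $(k-3)(k-4)\leq 2(v-4)$ removes $k=27$ when $v=81$, since $23\cdot 22>154$. For $v=729$ it removes $k=81,243$, since $77\cdot76>1450$; it keeps $k=27$, since $24\cdot 23=552\le 1450$. For $v=16$, the choice $k=8$ gives $20\le 24$, so it survives.

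Third, test integrality of $\lambda_4=2(v-4)/(k-4)$ on what is left.
\begin{itemize}
\item $(16,8)$: $\lambda_4=24/4=6$, and then $\lambda_3=2\cdot13\cdot12/(5\cdot4)=312/20$, which is not an integer.
\item $(81,9)$: $\lambda_4=154/5$, not an integer.
\item $(729,9)$: $\lambda_4=1450/5=290$; then $\lambda_3=2\cdot726\cdot725/30=35090$ and $\lambda_2=2\cdot727\cdot726\cdot725/(7\cdot6\cdot5)$. Here $727$ is prime and $726\cdot 725$ is divisible by neither $7$ nor by $7\cdot 6\cdot 5$ after cancellation ($726=2\cdot3\cdot 11^2$, $725=5^2\cdot 29$, neither divisible by $7$), so $\lambda_2$ is not an integer.
\item $(729,27)$: $\lambda_4=1450/23$, not an integer.
\end{itemize}

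This should eliminate every pair. If one of these calculations turned out to give an integer, I would fall back on Lemma~\ref{lem2.16}. In that event the orders of the groups are very small: $|G_0|\le|\S_7|$ for $v=16$; $|G_0|$ bounded by $|\G\L(2,9)|$-type groups of order at most a few thousand for $v=81$; and $|G_0|\le 2|\S\L(2,13)|(\text{scalars})$ for $v=729$. Meanwhile $b=2\binom{v}{5}/\binom{k}{5}$ contains large primes, for instance $727$ or $79$, that do not divide $|G|$. This would give $b\nmid|G|$.

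The only real obstacle is getting these arithmetic checks right, in particular the factorization step for $(729,9)$. I would double-check that case directly through $b$ as well: $b$ contains the prime factor $727$, while $|G|$ divides $3^6\cdot|\G\L(6,3)|$-bounded orders in case (A8), namely $729\cdot 2184\cdot 2$, which is not divisible by $727$. That gives an independent contradiction even if the divisibility bookkeeping were mishandled.
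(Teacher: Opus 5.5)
Your argument is correct and is essentially the paper's: list the divisors $k$ of $v$ and eliminate each one by non-integrality of $\lambda_4$, $\lambda_3$ or $\lambda_2$ (with the same witness for $(v,k)=(729,9)$, namely $7$ dividing the denominator of $\lambda_2$); the only variation is that you discard $k=27$ for $v=81$ and $k=81,243$ for $v=729$ via Corollary~\ref{cor2.3} rather than via $\lambda_4=154/23$ and $\lambda_4=1450/(k-4)$. There is one small slip there: the relevant products are $(k-3)(k-4)=24\cdot 23$ and $78\cdot 77$, not $23\cdot 22$ and $77\cdot 76$, but the inequalities still hold, and the fallback paragraph via Lemma~\ref{lem2.16} is not needed.
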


\begin{proof}
In Case~{\rm(A5)}, we have $G_0\cong\A_6$ or $\A_7$ and $v=16$.
Since $5<k<16$ and $k\mid16$, it follows that $k=8$.
However, $\lambda_3=2\cdot13\cdot12/(5\cdot4)=78/5$ is not an integer, a contradiction.

In Cases~{\rm(A6)} and~{\rm(A7)}, we have $v=81$ and hence $k\in\{9,27\}$.
The corresponding values of $\lambda_4$ are $154/5$ and $154/23$, neither of which is an integer.
Thus both cases are excluded.

Finally, in Case~{\rm(A8)}, we have $G_0\cong\S\L(2,13)$ and $v=3^6=729$.
Thus $k\in\{9,27,81,243\}$.
For $k\in\{27,81,243\}$, the value $\lambda_4=1450/(k-4)$ is not an integer.
For $k=9$, we have $\lambda_2=727\cdot726\cdot725/105$, which is not an integer since $7$ divides the denominator but none of the factors in the numerator.
Hence none of Cases~{\rm(A5)}--{\rm(A8)} can occur.
\end{proof}

By Lemmas~\ref{lem3.1}--\ref{lem3.5}, all eight affine cases are excluded.
Consequently, $G$ cannot be of affine type.

\subsection{The almost simple case}

We now assume that $G$ is of almost simple type.
Let $N$ be the socle of $G$.
Then $N$ is a non-abelian simple group satisfying $N\leq G\leq\Aut(N)$.
By the classification of finite almost simple $2$-homogeneous permutation groups{\rm\cite{Kantor,Xu}}, one of the following possibilities for $(N,v)$ occurs:
\begin{enumerate}[{\rm(B}1{\rm)}]
\item $N=\A_v$, where $v\geq5$;
\item $N=\P\S\L(d,q)$ and $v=(q^d-1)/(q-1)$, where
$d\geq2$ and $(d,q)\neq(2,2),(2,3)$;
\item $N=\P\S\U(3,q)$ and $v=q^3+1$, where $q>2$;
\item $N=\Sz(q)$ and $v=q^2+1$, where $q=2^{2e+1}>2$;
\item $N=\Re(q)$ and $v=q^3+1$, where $q=3^{2e+1}>3$;
\item $N=\Sp(2d,2)$ and $v=2^{2d-1}\pm2^{d-1}$, where $d\geq3$;
\item $N=\P\S\L(2,11)$ with $v=11$, or
$N=\P\S\L(2,8)$ with $v=28$;
\item $N=\M_v$, where $v\in\{11,12,22,23,24\}$;
\item $N=\M_{11}$ with $v=12$, $N=\A_7$ with $v=15$,
$N=\HS$ with $v=176$, or $N=\Co_3$ with $v=276$.
\end{enumerate}

We now consider Cases~{\rm(B1)}--{\rm(B9)} in turn.

\begin{lemma}
Case (B1) is impossible.
\end{lemma}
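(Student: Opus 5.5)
We need to show that Case (B1), $N=\A_v$ with $G\in\{\A_v,\S_v\}$ acting naturally on $\mathcal P$ with $v\geq5$, cannot occur. The key observation is that $\A_v$ is highly transitive, and in particular $k$-homogeneous on $\mathcal P$ whenever $v\geq k+2$. So the $G$-orbit of a single block $B$ is the set of all $k$-subsets of $\mathcal P$. Block-transitivity then forces $\mathcal B=\binom{\mathcal P}{k}$, and therefore $b=\binom{v}{k}$. For small $v$ one should also handle $k=v-1$ separately, but $k\mid v$ with $5<k<v$ forces $k\leq v/2$. Hence $v\geq 2k\geq k+2$ and $\A_v$ is transitive on $k$-subsets.

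The contradiction then comes from $\lambda=2$. In the complete design every $5$-subset lies in exactly $\binom{v-5}{k-5}$ blocks. So we would need
\begin{equation*}
\binom{v-5}{k-5}=2.
\end{equation*}
A binomial coefficient $\binom{n}{m}$ equals $2$ only for $(n,m)=(2,1)$, so $v-5=2$ and $k-5=1$, giving $v=7$ and $k=6$. This contradicts $k\mid v$, and also contradicts $k\leq v/2$. Alternatively, the same contradiction follows from Corollary~\ref{cor2.3}: $(k-3)(k-4)\leq 2(v-4)$ is violated by the complete design except in tiny cases. Either route excludes (B1).

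The only point needing care is the claim that $\A_v$ is transitive on $k$-subsets. I would justify it by noting that $\S_v$ is transitive on $k$-subsets. Since $k\leq v-2$, the stabilizer of a $k$-subset in $\S_v$ contains an odd permutation, for example a transposition inside the complement. Hence $\A_v$ is still transitive on $k$-subsets. I do not expect any real obstacle here; it is essentially a one-line case.
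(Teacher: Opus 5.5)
Your proof is correct and follows the paper's argument: $\A_v\le G$ is transitive on $k$-subsets, so $\mathcal D$ is the complete design, $\binom{v-5}{k-5}=2$ forces $(v,k)=(7,6)$, and this contradicts $k\mid v$. Your explicit justification of $k$-homogeneity via $k\le v/2$ is a detail the paper leaves unstated (in fact $\A_v$ is transitive on $k$-subsets for every $k$ once $v\geq 3$, so the $k=v-1$ worry is unnecessary). The parenthetical appeal to Corollary~\ref{cor2.3} is not a genuine alternative route and can simply be dropped.
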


\begin{proof}
Since $\mathcal D$ is non-trivial, we have $5<k<v$, and hence$v\geq7$. 
The group $N=A_v$ is transitive on the set of all $k$-subsets of $\mathcal P$. Let $B\in\mathcal B$. 
Since $N\leq G\leq\Aut(\mathcal D)$, every member of the orbit $B^N$ is a block of $\mathcal D$. It follows that every $k$-subset of $\mathcal P$ is a block, and hence $\mathcal D$ is the complete$5$-design.
 Therefore,
\begin{equation*}
2=\binom{v-5}{k-5}.
\end{equation*}
Since $1\leq k-5<v-5$, this implies that $(v,k)=(7,6)$, contrary to $k\mid v$. 
Therefore, this case cannot occur.
\end{proof}

We next consider Case (B2). 
Thus $N=\P\S\L(d,q)$ and $v=(q^d-1)/(q-1)$, where $d\geq2$ and $(d,q)\neq(2,2),(2,3)$. We treat the possible configurations separately according to $d$, the characteristic of the underlying field, and the position of $G$ between $N$ and $\Aut(N)$.

We first record a common reduction for the case $d=2$.

\begin{remark}
Suppose that $N=\P\S\L(2,q)$, where $q=p^e>3$, and let $B\in\mathcal B$.
Put $n=(2,q-1)$. Since $N\unlhd G$ and $G$ is block-transitive on $\mathcal B$, all $N$-orbits on $\mathcal B$ have the same length $[N:N_B]=[\P\S\L(2,q):\P\S\L(2,q)_B]$.
Hence
\begin{equation*}
\frac{b}{[N:N_B]}
=
\frac{2n(q-2)(q-3)|\P\S\L(2,q)_B|}
{k(k-1)(k-2)(k-3)(k-4)}
\end{equation*}
is a positive integer. 
Since$(k-1)(k-2)(k-3)(k-4)/(2n)$ is an integer, it follows that$k\mid(q-2)(q-3)|\P\S\L(2,q)_B|$.
Moreover, $k\mid q+1$, so$q\equiv-1\pmod{k}$ and hence
\begin{equation*}
k\mid12|\P\S\L(2,q)_B|.
\end{equation*}

Suppose now that $|\P\S\L(2,q)_B|>1$. 
Then$\P\S\L(2,q)_B$ is a non-trivial subgroup of $\P\S\L(2,q)$,and, since it stabilizes $B$ setwise, the block $B$ is a unionof $\P\S\L(2,q)_B$-orbits on $\mathbb P^1(q)$. 
Let $z$ be a non-negative integer. By Lemmas 2.5--2.13, together with $k\mid q+1$ and $5<k<q+1$, the following reduced possibilities remain.

\begin{enumerate}

\item If $\P\S\L(2,q)_B\cong C_c$, where $c\geq2$, then $k=cz$ when $c\mid(q+1)/n$, while $k=cz$, $cz+1$ or $cz+2$ when $c\mid(q-1)/n$.

\item If $\P\S\L(2,q)_B\cong D_{2c}$, then, for $c=2$,$k=4z+2$ when $q\equiv1\pmod4$, while $k=4z$ when$q\equiv3\pmod4$. 
For $c\geq3$, if $q\equiv1\pmod4$ and $c\mid(q+1)/2$, then $k=2cz$ or $2cz+c$; 
if $q\equiv1\pmod4$ and $c\mid(q-1)/2$, then $k=2cz+2$ or$2cz+c+2$;
if $q\equiv3\pmod4$ and $c\mid(q+1)/2$, then$k=2cz$;
if $q\equiv3\pmod4$ and $c\mid(q-1)/2$, then$k=2cz+2$. 
If $q$ is even and $c\mid q+1$, then$k=2cz+c$, while if $c\mid q-1$, then $k=2cz+c+2$.

\item If $\P\S\L(2,q)_B\cong E_{\bar q}$, where $\bar q\mid q$,
then $k=\bar qz+1$.

\item If $\P\S\L(2,q)_B\cong E_{\bar q}\rtimes C_c$, where
$\bar q\mid q$, $c\mid(\bar q-1)$ and $c\mid(q-1)$, then
$k=c\bar qz+1$ or $c\bar qz+\bar q+1$.

\item If $\P\S\L(2,q)_B\cong\P\S\L(2,\bar q)$, where
$q=\bar q^m$ and $m>1$, then
$k=(\bar q^3-\bar q)z/n+\bar q+1$ when $m$ is odd, while
$k=(\bar q^3-\bar q)z/n+\bar q^2+1$ when $m$ is even.

\item If $\P\S\L(2,q)_B\cong\P\G\L(2,\bar q)$, where
$q=\bar q^m$, $m>1$ and $m$ is even, then
$k=(\bar q^3-\bar q)z+\bar q^2+1$.

\item If $\P\S\L(2,q)_B\cong A_4$, then $k\mid144$.
If $q$ is even, Lemma~\ref{lem2.11} gives
$k=12z+1$ or $12z+5$. Thus $\gcd(k,6)=1$, contrary to
Lemma~\ref{lem2.18}.

Suppose that $q$ is odd. By Lemma~\ref{lem2.11} and
$k\mid q+1$, the only possibilities are
\begin{center}
\begin{tabular}{c|c}
\hline
$q\pmod{12}$ & $k$\\
\hline
$3$ & $16$\\
$5$ & $6,18$\\
$7$ & $8,16$\\
$11$ & $12,24,36,48,72,144$\\
\hline
\end{tabular}
\end{center}
There is no possible value of $k$ when
$q\equiv1$ or $9\pmod{12}$.

\item If $\P\S\L(2,q)_B\cong S_4$, then $q$ is odd and
$k\mid288$. By Lemma 2.12 and $k\mid q+1$, the only
possibilities are
\begin{center}
\begin{tabular}{c|c}
\hline
$q\pmod{24}$ & $k$\\
\hline
$7$ & $8,32$\\
$17$ & $6,18$\\
$23$ & $24,48,72,96,144,288$\\
\hline
\end{tabular}
\end{center}
There is no possible value of $k$ in the remaining congruence
classes occurring in Lemma 2.12.

\item If $\P\S\L(2,q)_B\cong A_5$ and $q$ is odd, then
$k\mid720$. By Lemma 2.13 and $k\mid q+1$, the only
possibilities are

\begin{center}
\begin{tabular}{c|c}
\hline
\text{condition on }q & k\\
\hline
$q=5^e,\ e>1\text{ odd}$ & $6$\\
$q\equiv9\pmod{60}$ & $10$\\
$q\equiv11\pmod{60}$ & $12,72$\\
$q\equiv19\pmod{60}$ & $20,80$\\
$q\equiv29\pmod{60}$ & $30,90$\\
$q\equiv59\pmod{60}$ & $60,120,180,240,360,720$\\
\hline
\end{tabular}
\end{center}

There is no possible value of $k$ in the remaining cases of
Lemma 2.13. If $q$ is even, then
$A_5\cong\P\S\L(2,4)$ is already included in the subfield
subgroup case in {\rm (5)}.

\end{enumerate}
\end{remark}

We first consider the case $G=N$.

\begin{lemma}
Case (B2) with $d=2$ and $G=N=\P\S\L(2,q)$, where $q=p^e>3$,
is impossible.
\end{lemma}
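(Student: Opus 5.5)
We work with $G=N=\P\S\L(2,q)$ acting on $v=q+1$ points. Since $G=N$, block-transitivity gives $b=[N:N_B]$ exactly. So
\[
\frac{2(q+1)q(q-1)(q-2)(q-3)}{k(k-1)(k-2)(k-3)(k-4)}=\frac{q(q^2-1)}{n\,|N_B|},
\]
which is equivalent to
\[
2n(q-2)(q-3)\,|N_B|=k(k-1)(k-2)(k-3)(k-4).
\]
This exact equation is much stronger than the divisibility in the Remark, and it drives the whole proof.

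First we rule out $|N_B|=1$. In that case the equation reads $2n(q-2)(q-3)=k(k-1)\cdots(k-4)$. Corollary~\ref{cor2.3} gives $k<\sqrt{2(q+1)}+4$, so the right side is less than roughly $k^5$, which is of order $q^{5/2}$. The left side is of order $q^2$, so this inequality alone does not bound $q$. We therefore also use $k\mid q+1$: reducing modulo $k$ gives $(q-2)(q-3)\equiv 12 \pmod k$. Hence $k\mid 2n\cdot 12=24n$, since $k$ divides the right side. So $k\mid 48$, with $k\ge 6$. For each such $k$ we solve the quadratic $2n(q-2)(q-3)=k(k-1)(k-2)(k-3)(k-4)$ for a prime power $q$ with $k\mid q+1$, and check $\lambda_2,\lambda_3,\lambda_4$. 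We expect no solutions. For example, $k=6$ gives $(q-2)(q-3)=180$ or $360$, which has no integer root.

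Next, for $|N_B|>1$ we go through items (1)--(9) of the Remark. The key observation is that $|N_B|$ is bounded above by the equation: $|N_B|=k(k-1)(k-2)(k-3)(k-4)/(2n(q-2)(q-3))$. Since $k<\sqrt{2q}+5$, this gives $|N_B|=O(\sqrt q)$. On the other side, the orbit structure forces $k$ to be a union of $N_B$-orbits and $k\mid q+1$.

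\begin{enumerate}
\item[(a)] In the cyclic and dihedral cases (1), (2), we substitute $|N_B|=c$ or $2c$ into the equation. Since $k\mid 12|N_B|$, we get $k\le 24c$. Combined with $c\lesssim k^5/q^2$ and $k^2\lesssim 2q$, this gives $c\ge q^2/k^5\gtrsim k^{-1}q^{2}/(2q)^{2}$. We then solve the small residual system directly: write $q+1=km$ and substitute into the exact equation. This yields a polynomial identity in $k,m,c$ that admits only finitely many small solutions, which we check by hand.
\item[(b)] In the $p$-local cases (3), (4), we have $k\equiv1$ or $\bar q+1 \pmod{\bar q}$, with $\bar q$ a power of $p$. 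Since $k\mid q+1$, $k$ is coprime to $p$, and $|N_B|=\bar q c$ must divide the right side. Computing the $p$-part of the equation forces $\bar q\mid k(k-1)\cdots(k-4)$, hence $\bar q\le$ a small multiple of $k$, and then $q$ is bounded.
\item[(c)] In the subfield cases (5), (6), we have $k\ge \bar q+1$ and $q\ge \bar q^2$. Then $|N_B|\approx\bar q^3$ would require $k^5\gtrsim q^2\bar q^3$, which contradicts $k\lesssim\sqrt{2}\,\bar q^{m/2}$ except possibly for $m=2$ with tiny $\bar q$. We check those remaining cases directly.
\item[(d)] In cases (7)--(9), the tables give finitely many $k$ and $|N_B|\in\{12,24,60\}$. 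For each, the equation becomes a quadratic in $q$, and we check that it has no root in the required congruence class.
\end{enumerate}

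The main obstacle will be step (a). There $c$ can grow with $q$, and the orbit lengths only constrain $k$ modulo $c$, so the argument has to combine the exact equation, the bound $k\mid12|N_B|$, and the bound on $k$ carefully.

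My first attempt at (a) is as follows. From $k\mid 12\cdot 2c$ and $k=2cz+\epsilon$ with $\epsilon\in\{0,1,2,c,c+2\}$, we get $z\le 12$. Writing $c\approx k/(2z)$ turns the equation into $2n(q-2)(q-3)\approx 2zk^4$. Since $k\mid q+1$ with $q+1\ge k$, we put $q+1=km$ and obtain $m^2\approx zk^2$. Then $m=\Theta(k)$, while Corollary~\ref{cor2.3} gives $m<k/2+O(1)$. We must push these two estimates into contradiction, or into a finite list of cases that can be checked directly.
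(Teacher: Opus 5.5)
Your overall route is the paper's: the exact equation \eqref{eq3.1}, followed by a case analysis over the subgroup types in the Remark. Your treatment of $|N_B|=1$ is fine.

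The genuine gap is that you expect every residual arithmetic system to be empty, and it is not. Equation \eqref{eq3.1}, together with $k\mid q+1$, the orbit-length constraints and integrality of all $\lambda_s$, has honest solutions.
\begin{itemize}
\item For $q=23$, $k=8$, $|N_B|=4$: $4\cdot21\cdot20\cdot4=6720=8\cdot7\cdot6\cdot5\cdot4$. Both $C_4$ and the Klein four-group ($D_{2c}$ with $c=2$) have six orbits of length $4$ on $\mathbb P^1(23)$, so $B$ can be a union of two of them.
\item For $q=47$, $k=12$, $|N_B|=12$: $4\cdot45\cdot44\cdot12=95040=12\cdot11\cdot10\cdot9\cdot8$. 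Each of $C_{12}$, the dihedral group of order $12$, and $A_4$ has four orbits of length $12$ on $\mathbb P^1(47)$.
\end{itemize}
Here $\lambda_4=10$ and $11$ respectively, both integers.

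So your claim in (d) is false as stated. For $A_4$ and $k=12$ the quadratic $(q-2)(q-3)=1980$ has the root $q=47\equiv11\pmod{12}$, which is exactly the class in which the Remark allows $k=12$. Likewise, the finite lists you would obtain in (a) necessarily contain $(23,8,C_4)$, $(23,8,D_4)$, $(47,12,C_{12})$ and $(47,12,D_{12})$. These survive every arithmetic test, so no sharpening of your estimates can remove them. Indeed, the parameters are realized: $\P\S\L(2,23)$ preserves a $5$-$(24,8,2)$ design, the union of two Witt designs, though with two block orbits and block stabilizer of order $8$.

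What is missing is a design-level exclusion of these five candidates. For each such $H$, determine up to $N$-conjugacy the $k$-subsets that are unions of $H$-orbits with full stabilizer exactly $H$. Then show that in each resulting $N$-orbit the number of members containing a $4$-subset is not constant, so the orbit is not even a $4$-design with the required $\lambda_4$. The paper does this by a Magma enumeration; without this step, or a substitute argument, the lemma is not proved.

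Several auxiliary estimates also run the wrong way.
\begin{itemize}
\item Equation \eqref{eq3.1} gives the \emph{upper} bound $|N_B|\le k^5/\bigl(2n(q-2)(q-3)\bigr)$, not a lower bound on $c$.
\item Corollary~\ref{cor2.3} gives the \emph{lower} bound $m=(q+1)/k\ge\bigl((k-3)(k-4)/2+4\bigr)/k$, not $m<k/2+O(1)$. This is compatible with $m=\Theta(k)$, and with $m=3,4$ in the survivors, so the contradiction you aim for in (a) does not exist.
\item In (c), the size comparison only gives roughly $\bar q^{3}\le 2^{3/2}\bar q^{m/2}$, which is no contradiction once $m\ge6$. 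There you need $k\mid24n|N_B|$ (from $q\equiv-1\pmod k$), combined with the orbit forms $k=(\bar q+1)\bigl(\bar q(\bar q-1)z/n+1\bigr)$ or $k=(\bar q^3-\bar q)z/n+\bar q^2+1$ and the resulting coprimality with $(\bar q^3-\bar q)/n$, as in the paper.
\item In (b), bounding $\bar q$ by $k$ does not bound $q$. You must first bound $k$ using $(k,p)=1$, $k\mid12|N_B|$ and $(k,c)\mid2$, which gives $k\mid24$.
\end{itemize}
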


\begin{proof}
In this case, $\mathcal P=\mathbb P^1(q)=GF(q)\cup\{\infty\}$ and $|G|=(q+1)q(q-1)/n$, where $n=(2,q-1)$. Since $\mathcal D$ is non-trivial, we have $5<k<q+1$. 
For any $B\in\mathcal B$, block-transitivity gives $b=[G:G_B]=[\P\S\L(2,q):\P\S\L(2,q)_B]$. 
Combining this with Lemma 2.1 yields
\begin{equation}\tag{3.1}\label{eq3.1}
2n(q-2)(q-3)|\P\S\L(2,q)_B|
=
k(k-1)(k-2)(k-3)(k-4).
\end{equation}

Suppose first that $|\P\S\L(2,q)_B|=1$.
Since$(k-1)(k-2)(k-3)(k-4)/(2n)$ is an integer, equation (3.1)implies $k\mid(q-2)(q-3)$.
Together with $k\mid q+1$ and$((q-2)(q-3),q+1)\mid12$, this gives $k\mid12$, and hence$k=6$ or $12$. For $k=6$, equation (3.1) gives$q^2-5q+6=360/n$, while for $k=12$ it gives$q^2-5q+6=47520/n$.
For $n=1,2$, the discriminant of each quadratic equation is not a square.
Hence$|\P\S\L(2,q)_B|=1$ is impossible.

We may therefore assume that$|\P\S\L(2,q)_B|>1$.
By the preceding remark, the possible structures of $\P\S\L(2,q)_B$ and the corresponding forms of$k$ are precisely those listed in {\rm (1)}--{\rm (9)}.
We consider them in turn.

\vspace{2mm}

\textbf{Case 1}: $\P\S\L(2,q)_B\cong C_c$, where $c\geq2$.

\vspace{2mm}

Assume first that $k=cz$.
Substituting $|\P\S\L(2,q)_B|=c$ into equation~\eqref{eq3.1} and cancelling the common factor $c$, we obtain
\begin{equation}\tag{3.2}\label{eq3.2}
2n(q-2)(q-3)
=z(k-1)(k-2)(k-3)(k-4).
\end{equation}
Since $(k-1)(k-2)(k-3)(k-4)/(2n)$ is an integer, equation~\eqref{eq3.2} implies that $z\mid(q-2)(q-3)$.
Moreover, $z\mid k$ and $k\mid q+1$, and hence $z\mid q+1$.
Since $(q-2)(q-3)\equiv12\pmod{q+1}$, it follows that $z\mid12$.

We first consider the case where $c\mid(q+1)/n$.
If $q$ is odd, then $n=2$ and $z\in\{1,2,3,4,6,12\}$.
Since $c\mid(q+1)/2$, equation~\eqref{eq3.2} gives
\begin{equation*}
c\mid\frac{(q+1)(q-6)}{2}
=\frac{cz^2}{8}\bigl(c^3z^3-10c^2z^2+35cz-50\bigr)+3z-6.
\end{equation*}
After clearing the possible denominator in the first term, we obtain $c\mid3z-6$ when $z$ is even, while $c\mid12z-24$ when $z$ is odd.

The case $z=2$ requires separate consideration, since the preceding divisibility condition imposes no restriction on $c$.
In this case, $k=2c$, and Lemma~\ref{lem2.1} gives $\lambda_4=(q-3)/(c-2)\in\mathbb{Z}$.
Combining this equality with equation~\eqref{eq3.2}, we obtain
\begin{equation*}
\lambda_4\bigl(\lambda_4(c-2)+1\bigr)
=(2c-1)(2c-2)(2c-3).
\end{equation*}
Reducing this equality modulo $c-2$ gives $\lambda_4\equiv6\pmod{c-2}$.
Moreover, the displayed equality implies that $\lambda_4>6$.
Thus $(\lambda_4-6)/(c-2)$ is a positive integer.
Substituting $\lambda_4=6+(c-2)(\lambda_4-6)/(c-2)$ into the displayed equality gives
\begin{equation*}
(c-2)^2\left(\left(\frac{\lambda_4-6}{c-2}\right)^2-8\right)
+12(c-2)\left(\frac{\lambda_4-6}{c-2}-2\right)
+\frac{\lambda_4-6}{c-2}+14=0.
\end{equation*}
The left-hand side is negative when $(\lambda_4-6)/(c-2)=1$ and positive when $(\lambda_4-6)/(c-2)\geq3$.
Hence $(\lambda_4-6)/(c-2)=2$.
It follows that $(c-2)^2=4$, and hence $c=4$.
Consequently, $\lambda_4=10$, $q=23$ and $k=8$.

The preceding calculation, together with direct substitution for the remaining values of $z$, gives the following table.

\begin{center}
\begin{tabular}{c|c|c}
\hline
$z$ & Possible values of $c$ & Remaining $(q,k,c)$\\
\hline
$1$  & $6,12$                  & $(47,12,12)$\\
$2$  & $c\geq3$                & $(23,8,4)$\\
$3$  & $2,3,4,6,12$            & ---\\
$4$  & $2,3,6$                 & ---\\
$6$  & $2,3,4,6,12$            & ---\\
$12$ & $2,3,5,6,10,15,30$      & ---\\
\hline
\end{tabular}
\end{center}

If $q$ is even, then $n=1$.
Since $k\mid q+1$, the integers $k$, $c$ and $z$ are all odd.
The left-hand side of equation~\eqref{eq3.2} is divisible by $2^2$ but not by $2^3$, whereas $(k-1)(k-2)(k-3)(k-4)$ is divisible by $2^3$.
This is a contradiction.
Therefore, no possibility occurs when $q$ is even.

We next consider the case where $c\mid(q-1)/n$ and $k=cz$.
Since $c\mid k$ and $k\mid q+1$, we also have $c\mid q+1$.
If $q$ is odd, then $n=2$ and $c\mid((q-1)/2,q+1)$, which forces $c=2$.
If $q$ is even, then $c\mid(q-1,q+1)=1$, contrary to $c\geq2$.
Thus $q$ is odd and $c=2$.

In this case, $k=2z$.
Since $z\mid12$ and $k>5$, we have $z\in\{3,4,6,12\}$ and $k\in\{6,8,12,24\}$.
Substitution into equation~\eqref{eq3.2} gives the following results.

\begin{center}
\begin{tabular}{c|c|c}
\hline
$z$ & $k$ & Result\\
\hline
$3$  & $6$  & $q=12$\\
$4$  & $8$  & ---\\
$6$  & $12$ & ---\\
$12$ & $24$ & ---\\
\hline
\end{tabular}
\end{center}

Here and below, a dash means that the corresponding equation has no integral solution.
The value $q=12$ contradicts the assumption that $q$ is odd.
Therefore, no possibility occurs in this case.

It remains to consider $k=cz+1$ or $k=cz+2$.
By Lemma~\ref{lem2.5}, these two possibilities occur only when $c\mid(q-1)/n$.
If $k=cz+1$, then equation~\eqref{eq3.1} becomes
\begin{equation*}
2n(q-2)(q-3)
=kz(k-2)(k-3)(k-4).
\end{equation*}
If $k=cz+2$, then equation~\eqref{eq3.1} becomes
\begin{equation*}
2n(q-2)(q-3)
=kz(k-1)(k-3)(k-4).
\end{equation*}
Since $(k-3)(k-4)/2$ is an integer, both equations imply that $k\mid n(q-2)(q-3)$.
Together with $k\mid q+1$, this yields $k\mid12n$.

If $q$ is even, then $n=1$ and $k\mid12$.
However, $k>5$ and $k\mid q+1$, where $q+1$ is odd, so no possibility occurs.
Hence $q$ is odd, and therefore $n=2$ and $k\in\{6,8,12,24\}$.
The possible values of $(c,z)$ and the corresponding solutions of equation~\eqref{eq3.1} are listed below.

\begin{center}
\begin{tabular}{c|c|c|c}
\hline
Form of $k$ & $k$ & Possible $(c,z)$ & Result\\
\hline
$cz+1$ & $6$  & $(5,1)$                  & ---\\
$cz+1$ & $8$  & $(7,1)$                  & $q=18$\\
$cz+1$ & $12$ & $(11,1)$                 & ---\\
$cz+1$ & $24$ & $(23,1)$                 & ---\\
\hline
$cz+2$ & $6$  & $(2,2),(4,1)$            & $q=12$ if $(c,z)=(2,2)$\\
$cz+2$ & $8$  & $(2,3),(3,2),(6,1)$      & ---\\
$cz+2$ & $12$ & $(2,5),(5,2),(10,1)$     & ---\\
$cz+2$ & $24$ & $(2,11),(11,2),(22,1)$   & ---\\
\hline
\end{tabular}
\end{center}

The only integral values obtained are $q=12$ and $18$, both of which contradict the assumption that $q$ is odd.
Consequently, this case leaves only the two arithmetic candidates $(q,k,c)=(23,8,4)$ and $(q,k,c)=(47,12,12)$.

\vspace{2mm}

\textbf{Case 2}: $\P\S\L(2,q)_B\cong D_{2c}$, where $c\geq2$.

\vspace{2mm}

We first consider the case $c=2$.
Since $c\mid(q\pm1)/n$, the case where $q$ is even cannot occur.
Suppose that $q\equiv1\pmod4$.
Then $c\mid(q-1)/2$, and Lemma~\ref{lem2.6}, together with $k\mid q+1$, gives $k=4z+2$.
Substituting this expression into equation~\eqref{eq3.1}, we obtain
\begin{equation*}
2(q-2)(q-3)
=zk(k-1)\frac{(k-3)(k-4)}{n}.
\end{equation*}
Since $(k-3)(k-4)/n$ is an integer, it follows that $k\mid2(q-2)(q-3)$.
Together with $k\mid q+1$, this gives $k\mid24$.
Since $k>5$ and $k\equiv2\pmod4$, we have $k=6$.
Equation~\eqref{eq3.1} then gives $q^2-5q-39=0$, which has no integral solution.

Suppose now that $q\equiv3\pmod4$.
Then $c\mid(q+1)/2$, and Lemma~\ref{lem2.6} gives $k=4z$.
Equation~\eqref{eq3.1} becomes
\begin{equation*}
(q-2)(q-3)
=z\frac{(k-1)(k-2)(k-3)(k-4)}{2n}.
\end{equation*}
Since the quotient on the right-hand side is an integer, we have $z\mid(q-2)(q-3)$.
Moreover, $z\mid k\mid q+1$, and hence $z\mid12$.
Since $k>5$, it remains to consider $z\in\{2,3,4,6,12\}$.
The corresponding calculations are listed below.

\begin{center}
\begin{tabular}{c|c|c}
\hline
$z$ & $k$ & Result\\
\hline
$2$  & $8$  & $q=23$\\
$3$  & $12$ & ---\\
$4$  & $16$ & ---\\
$6$  & $24$ & ---\\
$12$ & $48$ & ---\\
\hline
\end{tabular}
\end{center}

Here and below, a dash means that the corresponding equation has no integral solution.
Thus the case $c=2$ leaves only the arithmetic candidate $(q,k,c)=(23,8,2)$.

We may now assume that $c\geq3$.
We first consider the possibility $k=2cz$.
Substituting $|\P\S\L(2,q)_B|=2c$ into equation~\eqref{eq3.1} and cancelling the common factor $2c$, we obtain
\begin{equation}\tag{3.3}\label{eq3.3}
(q-2)(q-3)
=z\frac{(k-1)(k-2)(k-3)(k-4)}{2n}.
\end{equation}
Since $(k-1)(k-2)(k-3)(k-4)/(2n)$ is an integer, equation~\eqref{eq3.3} implies that $z\mid(q-2)(q-3)$.
Moreover, $z\mid k$ and $k\mid q+1$, so $z\mid q+1$.
Since $(q-2)(q-3)\equiv12\pmod{q+1}$, it follows that $z\mid12$.

In this case, $q$ must be odd, since $k=2cz$ is even whereas $k\mid q+1$.
Thus $n=2$, $c\mid(q+1)/2$, and $z\in\{1,2,3,4,6,12\}$.
By equation~\eqref{eq3.3},
\begin{equation*}
c\mid\frac{(q+1)(q-6)}{2}
=\frac{cz^2}{2}\bigl(4c^3z^3-20c^2z^2+35cz-25\bigr)+3z-6.
\end{equation*}
Hence $c\mid3z-6$ when $z$ is even, while $c\mid6z-12$ when $z$ is odd.

The case $z=2$ requires separate consideration.
In this case, $k=4c$, and $(q+1)/(4c)$ is an integer greater than one.
By Lemma~\ref{lem2.1},
\begin{equation*}
\lambda_4=\frac{q-3}{2(c-1)}
\end{equation*}
is an integer, and hence
\begin{equation*}
c-1\mid2\left(\frac{q+1}{4c}-1\right).
\end{equation*}
On the other hand, equation~\eqref{eq3.3} gives
\begin{equation*}
2\sqrt2\,\frac{(c-1)^2}{c}
<
\frac{q+1}{4c}
<
2\sqrt2\,c+\frac1c.
\end{equation*}
If $c\geq13$, these two inequalities imply
\begin{equation*}
5<
\frac{2}{c-1}\left(\frac{q+1}{4c}-1\right)
<6,
\end{equation*}
contrary to the preceding divisibility condition.
Therefore, $3\leq c\leq12$.
The possible values of $(q+1)/(4c)$ satisfying both the inequalities and the divisibility condition are listed below.

\begin{center}
\begin{tabular}{c|c|c}
\hline
$c$ & Possible values of $(q+1)/(4c)$ & Result\\
\hline
$3$  & $4,5,6,7,8$ & ---\\
$4$  & $7,10$       & ---\\
$5$  & $11,13$      & ---\\
$6$  & $16$         & ---\\
$7$  & $16,19$      & ---\\
$8$  & $22$         & ---\\
$9$  & $21,25$      & ---\\
$10$ & $28$         & ---\\
$11$ & $26,31$      & ---\\
$12$ & $34$         & ---\\
\hline
\end{tabular}
\end{center}

Thus the case $z=2$ cannot occur.
For the remaining values of $z$, the preceding divisibility conditions and equation~\eqref{eq3.3} give the following table.

\begin{center}
\begin{tabular}{c|c|c}
\hline
$z$ & Possible values of $c$ & Remaining $(q,k,c)$\\
\hline
$1$  & $3,6$                  & $(47,12,6)$\\
$3$  & $3,6$                  & ---\\
$4$  & $3,6$                  & ---\\
$6$  & $3,4,6,12$             & ---\\
$12$ & $3,5,6,10,15,30$       & ---\\
\hline
\end{tabular}
\end{center}

We next consider the possibility $k=2cz+c=c(2z+1)$.
Substituting this expression into equation~\eqref{eq3.1} and cancelling $c$, we obtain
\begin{equation}\tag{3.4}\label{eq3.4}
4n(q-2)(q-3)
=(2z+1)(k-1)(k-2)(k-3)(k-4).
\end{equation}
Since $(k-1)(k-2)(k-3)(k-4)/(4n)$ is an integer, we have $2z+1\mid(q-2)(q-3)$.
Moreover, $2z+1\mid k\mid q+1$, and hence $2z+1\mid12$.
Since $2z+1$ is odd, we have $z=0$ or $1$.

Suppose first that $z=0$.
Then $k=c$.
If $q$ is odd, then $q\equiv1\pmod4$, $n=2$, and $c\mid(q+1)/2$.
In particular, $c$ is odd.
Reducing equation~\eqref{eq3.4} modulo $c$ gives $c\mid72$.
Since $c=k>5$, it follows that $c=9$.
Equation~\eqref{eq3.1} then gives $q=17$, but
\begin{equation*}
\lambda_4=\frac{2(q-3)}{k-4}=\frac{28}{5},
\end{equation*}
a contradiction.
If $q$ is even, then $c$ is odd and the same argument gives $c\mid24$, which is impossible since $c>5$.
Therefore, $z=0$ cannot occur.

Suppose now that $z=1$.
Then $k=3c$.
Reducing equation~\eqref{eq3.4} modulo $c$ gives $c\mid24$.
Since $c$ is odd and $c\geq3$, we have $c=3$ and $k=9$.
If $q$ is odd, equation~\eqref{eq3.1} has no integral solution.
If $q$ is even, equation~\eqref{eq3.1} gives $q=38$, which is not a prime power.
Thus the possibility $k=2cz+c$ is excluded.

We now consider the possibility $k=2cz+2$.
By Lemma~\ref{lem2.6}, this can occur only when $q$ is odd and $c\mid(q-1)/2$.
Equation~\eqref{eq3.1} gives
\begin{equation*}
2(q-2)(q-3)
=zk(k-1)\frac{(k-3)(k-4)}{n}.
\end{equation*}
Since $(k-3)(k-4)/n$ is an integer, it follows that $k\mid2(q-2)(q-3)$.
Together with $k\mid q+1$, this gives $k\mid24$.
Since $c\geq3$ and $k=2cz+2>5$, the remaining possibilities are listed below.

\begin{center}
\begin{tabular}{c|c|c}
\hline
$k$ & Possible $(c,z)$ & Result\\
\hline
$8$  & $(3,1)$  & ---\\
$12$ & $(5,1)$  & ---\\
$24$ & $(11,1)$ & ---\\
\hline
\end{tabular}
\end{center}

Therefore, the possibility $k=2cz+2$ cannot occur.

Finally, consider the possibility $k=2cz+c+2=c(2z+1)+2$.
Equation~\eqref{eq3.1} gives
\begin{equation*}
4(q-2)(q-3)
=(2z+1)k(k-1)\frac{(k-3)(k-4)}{n}.
\end{equation*}
Since $(k-3)(k-4)/n$ is an integer, it follows that $k\mid4(q-2)(q-3)$.
Together with $k\mid q+1$, this gives $k\mid48$.
Since $c\geq3$, the remaining possibilities are as follows.

\begin{center}
\begin{tabular}{c|c|c}
\hline
$k$ & Possible $(c,z)$ & Result\\
\hline
$6$  & $(4,0)$  & ---\\
$8$  & $(6,0)$  & ---\\
$12$ & $(10,0)$ & ---\\
$16$ & $(14,0)$ & ---\\
$24$ & $(22,0)$ & ---\\
$48$ & $(46,0)$ & ---\\
\hline
\end{tabular}
\end{center}

Therefore, the possibility $k=2cz+c+2$ cannot occur.
Consequently, this case leaves only the two arithmetic candidates $(q,k,c)=(23,8,2)$ and $(q,k,c)=(47,12,6)$.

\vspace{2mm}

\textbf{Case 3}: $\P\S\L(2,q)_B\cong E_{\bar q}$, where $\bar q\mid q$.

\vspace{2mm}

In this case, Lemma~\ref{lem2.7} gives $k=\bar qz+1$.
Substituting $|\P\S\L(2,q)_B|=\bar q$ and $k-1=\bar qz$ into equation~\eqref{eq3.1}, and cancelling the common factor $\bar q$, we obtain
\begin{equation*}
2(q-2)(q-3)
=zk\frac{(k-2)(k-3)(k-4)}{n}.
\end{equation*}
Since $(k-2)(k-3)(k-4)$ is the product of three consecutive integers and $n\in\{1,2\}$, the quotient $(k-2)(k-3)(k-4)/n$ is an integer.
It follows that $k\mid2(q-2)(q-3)$.
On the other hand, $k\mid q+1$, and hence $q\equiv-1\pmod{k}$.
Therefore,
\begin{equation*}
k\mid2(q-2)(q-3)\equiv24\pmod{k},
\end{equation*}
so that $k\mid24$.
Since $k>5$, we have $k\in\{6,8,12,24\}$.

Moreover, $\bar q\mid k-1$ and $\bar q>1$.
Since $k-1$ is prime for each of the four possible values of $k$, it follows that $\bar q=k-1$ and $z=1$.
In particular, $\bar q$ and hence $q$ are odd, so $n=2$.
Substitution into the preceding equation gives the following table.

\begin{center}
\begin{tabular}{c|c|c|c|c}
\hline
$k$ & $\bar q$ & $z$ & $(q-2)(q-3)$ & Result\\
\hline
$6$  & $5$  & $1$ & $36$    & The discriminant is $145$\\
$8$  & $7$  & $1$ & $240$   & $q=18$, not a power of $7$\\
$12$ & $11$ & $1$ & $2160$  & The discriminant is $8641$\\
$24$ & $23$ & $1$ & $55440$ & The discriminant is $221761$\\
\hline
\end{tabular}
\end{center}

None of the discriminants $145$, $8641$ and $221761$ is a perfect square, while the remaining case gives $q=18$, which is not a power of $7$.
Consequently, this case cannot occur.

\vspace{2mm}

\textbf{Case 4}: $\P\S\L(2,q)_B\cong E_{\bar q}\rtimes C_c$, where $\bar q\mid q$, $c\geq2$, $c\mid\bar q-1$ and $c\mid q-1$.

\vspace{2mm}

By Lemma~\ref{lem2.8}, we have $k=c\bar qz+1$ or $k=c\bar qz+\bar q+1$.

Assume first that $k=c\bar qz+1$.
Substituting $|\P\S\L(2,q)_B|=c\bar q$ and $k-1=c\bar qz$ into equation~\eqref{eq3.1}, and cancelling the common factor $c\bar q$, we obtain
\begin{equation*}
2(q-2)(q-3)
=zk\frac{(k-2)(k-3)(k-4)}{n}.
\end{equation*}
Since $(k-2)(k-3)(k-4)/n$ is an integer, it follows that $k\mid2(q-2)(q-3)$.
Together with $k\mid q+1$, this gives $k\mid24$.
Since $k>5$, we have $k\in\{6,8,12,24\}$.
For each of these values, $k-1\in\{5,7,11,23\}$ is prime.
However, $k-1=c\bar qz$, where $c\geq2$, $\bar q\geq2$ and $z\geq1$.
This is impossible.

We next consider the possibility $k=c\bar qz+\bar q+1$.
In this case,
\begin{equation*}
k-1=\bar q(cz+1)
\quad\text{and}\quad
k-2=c\bar qz+\bar q-1.
\end{equation*}
Since $c\mid\bar q-1$, we have $c\mid k-2$, and hence $c\bar q\mid(k-1)(k-2)$.
Cancelling the common factor $c\bar q$ from equation~\eqref{eq3.1}, we obtain
\begin{equation*}
2(q-2)(q-3)
=k(cz+1)\frac{k-2}{c}\frac{(k-3)(k-4)}{n}.
\end{equation*}
Both $(k-2)/c$ and $(k-3)(k-4)/n$ are integers.
Therefore, $k\mid2(q-2)(q-3)$.
Together with $k\mid q+1$, this again gives $k\mid24$, and hence
$k\in\{6,8,12,24\}$.

Now $k-1=\bar q(cz+1)$, and each possible value of $k-1$ is prime.
Since $\bar q>1$, it follows that $\bar q=k-1$ and $cz+1=1$.
Thus $z=0$ and $k=\bar q+1$.
In particular, $\bar q\in\{5,7,11,23\}$.
Since $\bar q\mid q$, the integer $q$ is odd, and hence $n=2$.

Substituting $k=\bar q+1$ into equation~\eqref{eq3.1} and cancelling $\bar q$, we obtain
\begin{equation*}
4c(q-2)(q-3)
=(\bar q+1)(\bar q-1)(\bar q-2)(\bar q-3).
\end{equation*}
Since $c\geq2$ and $c\mid\bar q-1$, the possible values of $c$ and the corresponding discriminants of the resulting quadratic equation in $q$ are as follows.

\begin{center}
\begin{tabular}{c|c|c}
\hline
$\bar q$ & $k$ & Possible $(c,\text{discriminant})$\\
\hline
$5$  & $6$  & $(2,73),(4,37)$\\
$7$  & $8$  & $(2,481),(3,321),(6,161)$\\
$11$ & $12$ & $(2,4321),(5,1729),(10,865)$\\
$23$ & $24$ & $(2,110881),(11,20161),(22,10081)$\\
\hline
\end{tabular}
\end{center}

None of the discriminants in the table is a perfect square.
Hence the corresponding quadratic equation has no integral solution for $q$.
Consequently, this case cannot occur.

\vspace{2mm}

\textbf{Case 5}:
$\P\S\L(2,q)_B\cong\P\S\L(2,\bar q)$, where $q=\bar q^m$ and
$m\geq1$, or
$\P\S\L(2,q)_B\cong\P\G\L(2,\bar q)$, where $q=\bar q^m$ and
$m>1$ is even.

\vspace{2mm}

We first consider
$\P\S\L(2,q)_B\cong\P\S\L(2,\bar q)$.
If $m=1$, then $\bar q=q$ and hence
$\P\S\L(2,q)_B=\P\S\L(2,q)$.
Since $\P\S\L(2,q)$ is transitive on $\mathbb{P}^{1}(q)$, the block
$B$ must be either empty or equal to the whole point set, contrary
to $5<k<q+1$.
Therefore, we may assume that $m>1$.

Since $q=\bar q^m$, we have
$n=(2,q-1)=(2,\bar q-1)$ and
$|\P\S\L(2,\bar q)|=(\bar q^3-\bar q)/n$.
Equation~\eqref{eq3.1} therefore gives
\begin{equation}\tag{3.5}\label{eq3.5}
2(q-2)(q-3)(\bar q^3-\bar q)
=k(k-1)(k-2)(k-3)(k-4).
\end{equation}
Since $k\mid q+1$, we have $q\equiv-1\pmod{k}$ and hence
$(q-2)(q-3)\equiv12\pmod{k}$.
It follows from equation~\eqref{eq3.5} that
$k\mid24(\bar q^3-\bar q)$.

Suppose first that $m$ is odd.
By Lemma~\ref{lem2.9}, together with $k\mid q+1$, we have
\begin{equation*}
k=\frac{\bar q^3-\bar q}{n}z+\bar q+1
=(\bar q+1)\left(\frac{\bar q(\bar q-1)}{n}z+1\right).
\end{equation*}
Since
$\bigl(\bar q(\bar q-1)z/n+1,\bar q(\bar q-1)/n\bigr)=1$,
the divisibility $k\mid24(\bar q^3-\bar q)$ implies that
$\bar q(\bar q-1)z/n+1\mid24n$.

If $z=0$, then $k=\bar q+1$.
Substituting this into equation~\eqref{eq3.5} and cancelling the
factor $\bar q^3-\bar q$, we obtain
$2(q-2)(q-3)=(\bar q-2)(\bar q-3)$.
For $\bar q=2$ or $3$, the right-hand side is zero whereas the
left-hand side is positive.
For $\bar q\geq4$, the inequality $q=\bar q^m>\bar q$ gives
$2(q-2)(q-3)>(\bar q-2)(\bar q-3)$.
Thus $z=0$ is impossible.

We may therefore assume that $z\geq1$.
The divisibility $\bar q(\bar q-1)z/n+1\mid24n$ leaves only
$(\bar q,z)=(2,1),(3,1),(3,5)$.
Substitution into equation~\eqref{eq3.5} gives the following table.
\begin{center}
\begin{tabular}{c|c|c|c}
\hline
$\bar q$ & $z$ & $k$ & Result\\
\hline
$2$ & $1$ & $9$  & $q=38$, not a power of $2$\\
$3$ & $1$ & $16$ & $q=107$, not a power of $3$\\
$3$ & $5$ & $64$ & Discriminant $76245121$, not a square\\
\hline
\end{tabular}
\end{center}
Therefore, the case where $m$ is odd cannot occur.

Suppose now that $m$ is even.
By Lemma~\ref{lem2.9}, together with $k\mid q+1$, we have
$k=(\bar q^3-\bar q)z/n+\bar q^2+1$.
Moreover,
$\bigl((\bar q^3-\bar q)/n,\bar q^2+1\bigr)=n$, and hence
$\bigl(k,(\bar q^3-\bar q)/n\bigr)=n$.
It follows from $k\mid24(\bar q^3-\bar q)$ that
$k/n\mid24n$.

Assume first that $z\geq1$.
If $\bar q\geq7$, then
$k/n\geq\bigl((\bar q^3-\bar q)/n+\bar q^2+1\bigr)/n>24n$,
contrary to $k/n\mid24n$.
Thus it remains to consider $\bar q\in\{2,3,4,5\}$.
The corresponding divisibility conditions are listed below.
\begin{center}
\begin{tabular}{c|c|c}
\hline
$\bar q$ & $k$ & Divisibility condition\\
\hline
$2$ & $6z+5$   & $6z+5\mid24$\\
$3$ & $12z+10$ & $6z+5\mid48$\\
$4$ & $60z+17$ & $60z+17\mid24$\\
$5$ & $60z+26$ & $30z+13\mid48$\\
\hline
\end{tabular}
\end{center}
None of these conditions admits a positive integer $z$.
Therefore, $z\geq1$ is impossible.

It remains to consider $z=0$.
Then $k=\bar q^2+1$.
Substituting this into equation~\eqref{eq3.5} and cancelling the
common factor $\bar q^3-\bar q$, we obtain
\begin{equation}\tag{3.6}\label{eq3.6}
2(q-2)(q-3)
=\bar q(\bar q^2+1)(\bar q^2-2)(\bar q^2-3).
\end{equation}
Since $k\mid q+1$, we have
$\bar q^2+1\mid\bar q^m+1$.
If $m\equiv0\pmod4$, then
$\bar q^m+1\equiv2\pmod{\bar q^2+1}$, a contradiction.
Consequently, $m\equiv2\pmod4$, and hence
$q=\bar q^m\equiv-1\pmod{\bar q^2+1}$.

Reducing equation~\eqref{eq3.6} modulo $\bar q^2+1$, its left-hand
side is congruent to $24$, whereas its right-hand side is congruent
to zero.
Thus $\bar q^2+1\mid24$.
For $\bar q=2,3,4$, the values of $\bar q^2+1$ are $5,10,17$,
none of which divides $24$, while $\bar q^2+1>24$ for
$\bar q\geq5$.
Therefore,
$\P\S\L(2,q)_B\cong\P\S\L(2,\bar q)$ cannot occur.

We next consider
$\P\S\L(2,q)_B\cong\P\G\L(2,\bar q)$, where $q=\bar q^m$ and
$m>1$ is even.
By Lemma~\ref{lem2.10}, the group $\P\G\L(2,\bar q)$ has one orbit
of length $\bar q+1$, one orbit of length $\bar q(\bar q-1)$, and
all its remaining orbits are regular.
Hence the possible values of $k$ are
$(\bar q^3-\bar q)z$,
$(\bar q^3-\bar q)z+\bar q+1$,
$(\bar q^3-\bar q)z+\bar q(\bar q-1)$, and
$(\bar q^3-\bar q)z+\bar q^2+1$.

The first and third possibilities are divisible by $\bar q$.
Since $\bar q\mid q$ and $(q,q+1)=1$, they are incompatible with
$k\mid q+1$.
For the second possibility, we have
$k=(\bar q+1)\bigl(\bar q(\bar q-1)z+1\bigr)$.
However, since $m$ is even,
$\bar q^m+1\equiv2\pmod{\bar q+1}$.
Thus $\bar q+1\nmid q+1$, and this possibility is also excluded.
Consequently, it remains only to consider
$k=(\bar q^3-\bar q)z+\bar q^2+1$.

Since $|\P\G\L(2,\bar q)|=\bar q^3-\bar q$,
equation~\eqref{eq3.1} gives
\begin{equation}\tag{3.7}\label{eq3.7}
2n(q-2)(q-3)(\bar q^3-\bar q)
=k(k-1)(k-2)(k-3)(k-4).
\end{equation}
Since $k\mid q+1$, we have $q\equiv-1\pmod{k}$ and hence
$(q-2)(q-3)\equiv12\pmod{k}$.
It follows from equation~\eqref{eq3.7} that
$k\mid24n(\bar q^3-\bar q)$.

Since $m$ is even, we have
$n=(2,q-1)=(2,\bar q-1)$.
Moreover,
$(\bar q^3-\bar q,\bar q^2+1)=n$, and hence
$(k,\bar q^3-\bar q)=n$.
Combining this equality with
$k\mid24n(\bar q^3-\bar q)$ gives
$k/n\mid24n$.

Assume first that $z\geq1$.
Then $k\geq\bar q^3-\bar q+\bar q^2+1$.
If $\bar q\geq7$, then $k/n>24n$, contrary to $k/n\mid24n$.
It remains to consider $\bar q\in\{2,3,4,5\}$.
The corresponding divisibility conditions are listed below.
\begin{center}
\begin{tabular}{c|c|c}
\hline
$\bar q$ & $k$ & Divisibility condition\\
\hline
$2$ & $6z+5$    & $6z+5\mid24$\\
$3$ & $24z+10$  & $12z+5\mid48$\\
$4$ & $60z+17$  & $60z+17\mid24$\\
$5$ & $120z+26$ & $60z+13\mid48$\\
\hline
\end{tabular}
\end{center}
None of these divisibility conditions admits a positive integer $z$.
Therefore, $z\geq1$ is impossible.

It remains to consider $z=0$.
Then $k=\bar q^2+1$.
Substituting this into equation~\eqref{eq3.7} and cancelling the
common factor $\bar q^3-\bar q$, we obtain
\begin{equation}\tag{3.8}\label{eq3.8}
2n(q-2)(q-3)
=\bar q(\bar q^2+1)(\bar q^2-2)(\bar q^2-3).
\end{equation}
Since $k\mid q+1$, we have
$\bar q^2+1\mid\bar q^m+1$.
If $m\equiv0\pmod4$, then
$\bar q^m+1\equiv2\pmod{\bar q^2+1}$, a contradiction.
Thus $m\equiv2\pmod4$, and hence
$q=\bar q^m\equiv-1\pmod{\bar q^2+1}$.

Reducing equation~\eqref{eq3.8} modulo $\bar q^2+1$, we obtain
$\bar q^2+1\mid24n$.
If $\bar q$ is even, then $n=1$, so
$\bar q^2+1\mid24$.
Since $\bar q^2+1$ is an odd integer at least $5$, this is
impossible.
If $\bar q$ is odd, then $n=2$, so
$\bar q^2+1\mid48$.
For $\bar q=3$, we have $\bar q^2+1=10\nmid48$, while for
$\bar q\geq5$ we have $\bar q^2+1\geq26$, and no such value
divides $48$.
Therefore,
$\P\S\L(2,q)_B\cong\P\G\L(2,\bar q)$ cannot occur.

Consequently, neither of the two subfield subgroup cases can occur.

\vspace{2mm}

\textbf{Case 6}: $\P\S\L(2,q)_B\cong\A_4$, $\S_4$ or $\A_5$.

\vspace{2mm}

The case where $\P\S\L(2,q)_B\cong\A_4$ and $q$ is even has
already been excluded in the preceding remark. If $q$ is even, then
an $\S_4$ subgroup does not occur, while an $\A_5$ subgroup is a
subfield subgroup isomorphic to $\P\S\L(2,4)$ and has already been
considered in Case~5. Hence $q$ is odd and $n=2$ in all the remaining
exceptional cases. Equation~\eqref{eq3.1} can therefore be written as
\begin{equation*}
4|\P\S\L(2,q)_B|(q-2)(q-3)
=
k(k-1)(k-2)(k-3)(k-4).
\end{equation*}

By the preceding remark, before applying equation~\eqref{eq3.1}, the
possible values of $k$ are
\begin{center}
\begin{tabular}{c|c}
\hline
$\P\S\L(2,q)_B$ & $k$\\
\hline
$\A_4$ & $6,8,12,16,18,24,36,48,72,144$\\
$\S_4$ & $6,8,18,24,32,48,72,96,144,288$\\
$\A_5$ & $6,10,12,20,30,60,72,80,90,120,180,240,360,720$\\
\hline
\end{tabular}
\end{center}
These lists can be reduced further by considering the $2$-parts of
equation~\eqref{eq3.1}. If $\P\S\L(2,q)_B\cong\A_4$ and $16\mid k$,
then $k\mid q+1$ gives $q\equiv-1\pmod{16}$, and hence
$q-3\equiv12\pmod{16}$. Thus the $2$-part of
$48(q-2)(q-3)$ is exactly $2^6$, whereas
$k(k-1)(k-2)(k-3)(k-4)$ is divisible by $2^7$, a contradiction.
Hence $16\nmid k$. Similarly, if
$\P\S\L(2,q)_B\cong\S_4$ and $32\mid k$, then the $2$-part of
$96(q-2)(q-3)$ is exactly $2^7$, whereas the right-hand side is
divisible by $2^8$, so $32\nmid k$. Finally, if
$\P\S\L(2,q)_B\cong\A_5$ and $16\mid k$, the same argument applied
to $240(q-2)(q-3)$ shows that the left-hand side has $2$-part
$2^6$, while the right-hand side is divisible by $2^7$. Hence again
$16\nmid k$.

Consequently, it is sufficient to consider
\begin{center}
\begin{tabular}{c|c}
\hline
$\P\S\L(2,q)_B$ & $k$\\
\hline
$\A_4$ & $6,8,12,18,24,36,72$\\
$\S_4$ & $6,8,18,24,48,72,144$\\
$\A_5$ & $6,10,12,20,30,60,72,90,120,180,360$\\
\hline
\end{tabular}
\end{center}
Substituting these values into equation~\eqref{eq3.1}, the only
integral values of $q$ are listed below.

\begin{center}
\begin{tabular}{c|c|c}
\hline
$\P\S\L(2,q)_B$ & $k$ & $q$\\
\hline
$\A_4$ & $12$ & $47$\\
$\S_4$ & $24$ & $233$\\
$\A_5$ & $60$ & $1655$\\
\hline
\end{tabular}
\end{center}

For $\P\S\L(2,q)_B\cong\A_4$, we have
$47\equiv11\pmod{12}$ and $12\mid48$, so the parameters are
compatible with the corresponding orbit structure in
Lemma~\ref{lem2.11}. Thus this case leaves the arithmetic candidate
$(q,k)=(47,12)$.

For $\P\S\L(2,q)_B\cong\S_4$, we have
$233\equiv17\pmod{24}$. By the preceding remark, when
$q\equiv17\pmod{24}$ the only possible values are $k=6$ and $18$;
in particular, $k=24$ cannot occur. Equivalently,
$24\nmid q+1$. Hence this case is impossible.

For $\P\S\L(2,q)_B\cong\A_5$, the value
$q=1655=5\cdot331$ is not a prime power. Moreover,
$1655\not\equiv59\pmod{60}$, whereas the preceding remark shows
that $k=60$ can occur only when $q\equiv59\pmod{60}$. Hence this
case is impossible.

Consequently, Case~6 leaves only the arithmetic candidate
$(q,k)=(47,12)$ arising from
$\P\S\L(2,q)_B\cong\A_4$.

Thus the remaining parameter sets are $5$-$(24,8,2)$ and
$5$-$(48,12,2)$. For the former, the possible block stabilizers are
$C_4$ and $D_4$ in $\P\S\L(2,23)$. For the latter, the possible
block stabilizers are $C_{12}$, $D_{12}$ and $\A_4$ in
$\P\S\L(2,47)$. Here $D_r$ denotes the dihedral group of order $r$.
We now exclude these remaining arithmetic candidates.

Recall that every $4$-subset of a $5$-$(v,k,2)$ design is contained
in exactly $\lambda_4=2(v-4)/(k-4)$ blocks. Thus $\lambda_4=10$ for
a $5$-$(24,8,2)$ design and $\lambda_4=11$ for a
$5$-$(48,12,2)$ design.

Put $N=\P\S\L(2,q)$. For each prescribed subgroup type, we take
representatives $H$ of all $N$-conjugacy classes and enumerate all
unions of $H$-orbits on $\mathbb P^1(q)$ having size $k$.
We retain a subset $B$ precisely when its full setwise stabilizer
$N_B$ equals $H$, and then remove duplicate $N$-orbits.
This procedure is exhaustive: any block with stabilizer conjugate
to $H$ has an $N$-translate that is a union of $H$-orbits and has
full stabilizer $H$.
There are respectively $1,2,1,2,2$ subgroup conjugacy classes and
$1,2,1,4,4$ candidate block orbits for the five rows below.
The enumeration and containment counts were computed using Magma.
The Magma code for this computation and for the verification of
Lemma~\ref{lem3.12}, together with the corresponding computational
output, is available from the authors upon request.

For each representative $B$, put $\mathcal O_B=B^N$ and define
\[
n_B(S)=|\{C\in\mathcal O_B:S\subseteq C\}|
      =\frac{|\{g\in N:S\subseteq B^g\}|}{|N_B|}
\]
for each $4$-subset $S$.
The last column of the following table gives two distinct values
of $n_B(S)$. In the $D_{12}$ and $\A_4$ rows, each displayed pair
occurs in two of the four candidate $N$-orbits. In every other
row, the displayed pair occurs in each candidate $N$-orbit.

\begin{center}
\begin{tabular}{c|c|c|c|c|c}
\hline
$(q,k)$ & $N_B$ & Form of $B$ & $N$-orbits
& $\lambda_4$ & Containment numbers\\
\hline
$(23,8)$ & $C_4$ & union of two $4$-orbits & $1$ & $10$ & $7,14$\\
$(23,8)$ & $D_4$ & union of two $4$-orbits & $2$ & $10$ & $4,12$\\
$(47,12)$ & $C_{12}$ & one $12$-orbit & $1$ & $11$ & $4,15$\\
$(47,12)$ & $D_{12}$ & one $12$-orbit & $4$ & $11$
& $7,14$ or $8,19$\\
$(47,12)$ & $\A_4$ & one $12$-orbit & $4$ & $11$
& $8,13$ or $8,14$\\
\hline
\end{tabular}
\end{center}

In every candidate block orbit, the containment number of a
$4$-subset is not constant and, in particular, is not always equal
to the required value of $\lambda_4$.
Hence none of these block orbits forms a $4$-design, and therefore
none can form a $5$-design. Consequently, when
$G=N=\P\S\L(2,q)$, there exists no non-trivial block-transitive
$5$-$(v,k,2)$ design satisfying $k\mid v$.
\end{proof}

\begin{lemma}\label{lem3.8}
Suppose that Case (B2) holds with $d=2$ and
$\P\S\L(2,q)<G\leq\P\Gamma\L(2,q)$, where $q=p^e>3$ and
$p>3$. Then
\begin{equation*}
(v,k,G)=(12,6,\P\G\L(2,11))
\quad\text{or}\quad
(v,k,G)=(24,8,\P\G\L(2,23)).
\end{equation*}
Moreover, both possibilities occur.
\end{lemma}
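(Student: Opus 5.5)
The plan is to reduce to the computation already done for $N=\P\S\L(2,q)$, now allowing $N$ to have several orbits on $\mathcal B$. Since $N\unlhd G$ and $G$ is block-transitive, all $N$-orbits on $\mathcal B$ have the same length $[N:N_B]$, and their number $b/[N:N_B]$ divides $[G:N]$. Here $[G:N]$ divides $|\P\Gamma\L(2,q):\P\S\L(2,q)|=ne$. Thus equation~\eqref{eq3.1} generalises to
\begin{equation*}
2n(q-2)(q-3)|N_B|=k(k-1)(k-2)(k-3)(k-4)\,s,\qquad s\mid ne.
\end{equation*}
The preceding remark already uses only this relaxed condition: it obtains $k\mid 12|N_B|$ and the list (1)--(9) of possible shapes of $B$ as unions of $N_B$-orbits. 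So I would rerun Cases 1--6 of the previous lemma with the extra factor $s$. Since $p>3$, $q$ is odd, $n=2$, and $s$ divides $2e$.

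First I would dispose of $|N_B|=1$. Then $k\mid12s$ with $k\mid q+1$. Combined with the size bound $q^2\approx k^5/(4s)$ and $s\le 2e\le 2\log_p q$, this leaves only finitely many small $q$ to test. In each structured case the divisibility arguments of Cases 1--6 carry through with $12$ replaced by $12s$ or $24s$. For example, $z\mid 12s$ in Case 1 and $k\mid 24s$ in Cases 3 and 4. Together with Corollary~\ref{cor2.3}, $k<\sqrt{2(q+1)}+4$, the crude growth estimate $q^2\lesssim k^5/(4s)$ in the other direction, and $s\le 2\log_5 q$, this bounds $q$ in every family. For each bounded family I would solve the quadratic in $q$ for each admissible $s$ and keep only prime powers $q=p^e$, $p>3$, with $s\mid 2e$.

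I expect the candidates to be precisely $(q,k,s)=(11,6,2)$ and $(23,8,2)$. Both have $e=1$, which forces $G=\P\G\L(2,q)$ (for $e=1$ and $s=2$ we need $[G:N]=2$), together with the already-known arithmetic survivors such as $(47,12)$. The survivors with $s=1$ are exactly those excluded at the end of the previous proof: when $s=1$ a single $N$-orbit is a $5$-design, and the Magma containment counts show this fails. The main obstacle is the case $s=2$, where $\mathcal B=\mathcal O\cup\mathcal O^g$ for $g\in\P\G\L(2,q)\setminus N$. Here the containment numbers of a $4$-set, or better a $5$-set, must be summed over the two $N$-orbits, and the sum must be constant equal to $\lambda_5=2$ (respectively $\lambda_4$). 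I would redo the Magma enumeration of the previous lemma for $G=\P\G\L(2,q)$ directly. I would take $G$-classes of possible stabilisers $G_B$ (with $G_B\cap N=N_B$ of the listed types), enumerate unions of $G_B$-orbits of size $k$, and test whether the $G$-orbit is a $5$-design. I expect this to exclude $q=47$ and all other residual cases, leaving only $v=12$ and $v=24$.

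For existence, I would invoke the known examples. For $v=12$, by Wei and Li \cite{WeiLi} the natural $\P\G\L(2,11)$-orbit of a suitable hexad gives a $5$-$(12,6,2)$ design. For $v=24$, following \cite{Betten}, take the Witt $5$-$(24,8,1)$ design with $\P\S\L(2,23)\le\M_{24}$ acting block-transitively on the octads. An outer element of $\P\G\L(2,23)$ maps it to a second Witt design with no common octad, and the union is a single $\P\G\L(2,23)$-orbit forming a $5$-$(24,8,2)$ design. The divisibility $k\mid v$ holds in both cases, and a direct computation confirms simplicity and block-transitivity.
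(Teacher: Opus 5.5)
Your reduction to
$4(q-2)(q-3)|N_B| = s\,k(k-1)(k-2)(k-3)(k-4)$, with $s=[G:NG_B]$ dividing $2e$, is correct. The step that is supposed to make everything finite, however, fails as stated.

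(As a side remark, your estimate should read $q^2\approx s\,k^5/(4|N_B|)$, not $k^5/(4s)$.)

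Growth alone never bounds $q$. You need an a priori bound on $k$. In the families $k=cz$ (cyclic, and dihedral with $k=2cz$) and $k=c(2z+1)=ct$ (dihedral), that bound comes only from reducing modulo $c$, which gives $c\mid 24(sz-2)$, respectively $c\mid 24(st-4)$. This reduction degenerates exactly when $sz=2$ (so $s=2$, $z=1$) or $st=4$ (so $s=4$, $t=1$). In every degenerate case the equation becomes
$2(q-2)(q-3)=(k-1)(k-2)(k-3)(k-4)$, so $q\approx k^2/\sqrt2$. This is compatible with $k<\sqrt{2v}+4$ for every $q$, so nothing is bounded.

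Here you need a genuine Diophantine argument. For example, put $q=uk-1$; this gives $k^3-10k^2+(35-2u^2)k+14u-50=0$. Hence $k\mid 14u-50$, and $u<k/\sqrt2$ bounds the quotient, leaving finitely many quadratics in $k$. Their only admissible solution is $(q,k)=(23,8)$. Alternatively, you can adapt the $\lambda_4$ argument of Lemma~3.7, Case~1, $z=2$.

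This degenerate family is exactly where the $5$-$(24,8,2)$ example lives ($N_B\cong D_8$, $c=4$, $z=1$, $s=2$). So, as written, your argument does not actually derive one of the two answers; it only anticipates it.

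Beyond this, your route differs from the paper's in how $s$ is controlled. The paper never lets $s$ grow with $e$. The field-automorphism part $G^*\cap\langle\tau_\alpha\rangle$ fixes the subline $\mathbb P^1(p)$ pointwise, and this subline has $p+1\ge 6$ points; this is where $p>3$ is used. By Lemma~\ref{lem2.17}, that group therefore fixes or swaps the two blocks through five of these points. Together with $[G:G^*]\le 2$, this gives $s\in\{1,2,4\}$.

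Everything then reduces to the two explicit equations \eqref{eq3.9} and \eqref{eq3.10}. The only survivors are $(11,6,C_5)$ and $(23,8,D_8)$, and since $q$ is prime, $G=\P\G\L(2,q)$ follows immediately, with no machine search.

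Working only with $s\mid 2e$ can in principle be pushed through, since $s\le 2\log_5 q$. But it makes the residual searches far larger. Also, if any candidate with $e>1$ survived, your final Magma step would have to treat every $G$ with $\P\S\L(2,q)<G\le\P\Gamma\L(2,q)$, not just $\P\G\L(2,q)$.

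Your existence argument is fine. The disjointness of the two Witt designs holds because their intersection is $\P\S\L(2,23)$-invariant, hence empty or everything. Everything is impossible, since $\P\S\L(2,23)$ is maximal in $\M_{24}$ and so $\P\G\L(2,23)\not\le \M_{24}$.
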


\begin{proof}
If $\P\S\L(2,q)$ is block-transitive on $\mathcal B$, then
Lemma~3.7 applies to $\P\S\L(2,q)$, which is impossible.
Hence we assume that $\P\S\L(2,q)$ is not block-transitive on
$\mathcal B$. Since $p>3$, the integer $q$ is odd and hence $n=2$.

Let
\begin{equation*}
G^*=G\cap
\bigl(\P\S\L(2,q)\rtimes\langle\tau_\alpha\rangle\bigr),
\end{equation*}
where $\tau_\alpha$ is induced by the Frobenius automorphism
$\alpha:x\mapsto x^p$. Since $q$ is odd, we have $[G:G^*]\leq2$.

The subgroup $G^*\cap\langle\tau_\alpha\rangle$ fixes pointwise
the subline $\mathbb P^1(p)$, which contains $p+1\geq6$ points.
Choose a $5$-subset of this subline. Since $\lambda=2$, this
$5$-subset is contained in precisely two blocks. Thus
$G^*\cap\langle\tau_\alpha\rangle$ either fixes these two blocks
individually or interchanges them. Choosing $B$ to be one of these
two blocks, we obtain
\begin{equation*}
\left[G^*:
\P\S\L(2,q)(G^*\cap G_B)\right]\leq2.
\end{equation*}
Since $[G:G^*]\leq2$, it follows that
$[G:\P\S\L(2,q)G_B]\in\{1,2,4\}$. If this index is $1$, then
$G=\P\S\L(2,q)G_B$, and hence
\begin{equation*}
[G:G_B]
=
[\P\S\L(2,q):\P\S\L(2,q)_B],
\end{equation*}
so $\P\S\L(2,q)$ is block-transitive on $\mathcal B$, contrary
to our assumption. Therefore,
$[G:\P\S\L(2,q)G_B]\in\{2,4\}$.

Using
$b=[G:G_B]=[G:\P\S\L(2,q)G_B]
[\P\S\L(2,q):\P\S\L(2,q)_B]$
and Lemma~2.1, we obtain either
\begin{equation}\tag{3.9}\label{eq3.9}
2(q-2)(q-3)|\P\S\L(2,q)_B|
=
k(k-1)(k-2)(k-3)(k-4)
\end{equation}
or
\begin{equation}\tag{3.10}\label{eq3.10}
(q-2)(q-3)|\P\S\L(2,q)_B|
=
k(k-1)(k-2)(k-3)(k-4).
\end{equation}

Since $k\mid q+1$, we have $q\equiv-1\pmod{k}$ and hence
$(q-2)(q-3)\equiv12\pmod{k}$.

We first exclude the possibility that
$|\P\S\L(2,q)_B|=1$. Equation~\eqref{eq3.9} gives $k\mid24$,
while equation~\eqref{eq3.10} gives $k\mid12$. Thus it is
sufficient to consider $k\in\{6,8,12,24\}$. Substitution into the
corresponding equations gives no integral prime power $q$
satisfying $k\mid q+1$.

We may therefore assume that
$|\P\S\L(2,q)_B|>1$. By the preceding remark, it remains to
consider the following possible structures of
$\P\S\L(2,q)_B$.

\vspace{2mm}

\textbf{Case 1}: $\P\S\L(2,q)_B\cong C_c$.

\vspace{2mm}

The possible values of $k$ are $cz$, $cz+1$ and $cz+2$, subject
to the conditions in the preceding remark.

If $k=cz$, equations~\eqref{eq3.9} and~\eqref{eq3.10} reduce
respectively to
$2(q-2)(q-3)=z(k-1)(k-2)(k-3)(k-4)$ and
$(q-2)(q-3)=z(k-1)(k-2)(k-3)(k-4)$.
Since $(k-1)(k-2)(k-3)(k-4)$ is even, both equations imply
$z\mid(q-2)(q-3)$. Together with $z\mid k\mid q+1$, this gives
$z\mid12$.
Write the two equations uniformly as
$a(q-2)(q-3)=z(k-1)(k-2)(k-3)(k-4)$, where $a=2$ or $1$,
respectively. Reducing modulo $c$ gives
$c\mid12(2z-a)$.
Thus, except when $(a,z)=(2,1)$, both $z$ and $c$ range over
finite sets: $z\in\{1,2,3,4,6,12\}$ and
$c\geq2$ divides $12(2z-a)$.
For these pairs, with $k=cz>5$, checking whether
$1+4z(k-1)(k-2)(k-3)(k-4)/a$ is an odd integer square
leaves only $(a,z,c,q)=(1,2,3,18)$.
However, this gives $k=6\nmid q+1=19$.

It remains to consider $(a,z)=(2,1)$, so that $k=c$ and
$2(q-2)(q-3)=(k-1)(k-2)(k-3)(k-4)$.
Put $u=(q+1)/k$, which is an integer with $u\geq2$.
Since
$q-3<(k-1)(k-2)/\sqrt{2}$ and $k\geq6$, we have
$q+1<k^2/\sqrt{2}$, and hence $u<k/\sqrt{2}$.
Substituting $q=uk-1$ into the equation and cancelling $k$
gives
$k^3-10k^2+(35-2u^2)k+14u-50=0$.
Consequently, $s=(14u-50)/k$ is an integer satisfying
$-3\leq s\leq9$.
Substitution of $u=(sk+50)/14$ yields
$(98-s^2)k^2-(980+100s)k+930+98s=0$.
For $s\in\{-3,-2,\ldots,9\}$, the discriminant is a square
only when $s=-1$ or $6$; the only integer solution with $k>5$
is $s=-1$, $k=8$.
This gives $u=3$ and $q=23$, but $c=8$ divides neither
$(q-1)/2=11$ nor $(q+1)/2=12$.
Thus $k=cz$ is impossible.

If $k=cz+1$, cancelling $c$ from equations~\eqref{eq3.9}
and~\eqref{eq3.10} and reducing modulo $k$ gives respectively
$k\mid24$ and $k\mid12$.
Thus $k\in\{6,8,12,24\}$ for equation~\eqref{eq3.9}, and
$k\in\{6,12\}$ for equation~\eqref{eq3.10}.
For each such $k$, we have $c\geq2$ and $c\mid k-1$.
Checking these finite possibilities in the corresponding
equations leaves only $(q,k,c)=(11,6,5)$, arising from
equation~\eqref{eq3.9}.

If $k=cz+2$, cancelling $c$ using $k-2=cz$ similarly gives
$k\mid24$ or $k\mid12$, respectively.
Checking the same finite sets of $k$, now with $c\geq2$
and $c\mid k-2$, leaves only $(q,k,c)=(12,6,4)$ from
equation~\eqref{eq3.9}.
This is excluded by $k\nmid q+1$.
Thus the cyclic case leaves only
$(q,k,\P\S\L(2,q)_B)=(11,6,C_5)$.
\vspace{2mm}

\textbf{Case 2}: $\P\S\L(2,q)_B\cong D_{2c}$, where $c\geq2$.

\vspace{2mm}

The possible values of $k$ are $2cz$, $2cz+c$, $2cz+2$ and
$2cz+c+2$.

Suppose first that $k=2cz$. Since
$|\P\S\L(2,q)_B|=2c$, cancelling $2c$ from
equations~\eqref{eq3.9} and~\eqref{eq3.10} gives respectively
$2(q-2)(q-3)=z(k-1)(k-2)(k-3)(k-4)$ and
$(q-2)(q-3)=z(k-1)(k-2)(k-3)(k-4)$.
These are exactly the equations considered in Case~1 for
$k=cz$. Hence the arithmetic analysis carried out there applies
without change and leaves only $q=23$, $k=8$ and $z=1$.
Thus $c=4$, and the subgroup condition in the preceding remark is
satisfied since $4\mid(q+1)/2=12$. Hence this case gives
$\P\S\L(2,23)_B\cong D_8$.

Suppose next that $k=2cz+c$. Put $t=2z+1$, so that $k=ct$.
Write equations~\eqref{eq3.9} and~\eqref{eq3.10} uniformly as
$a(q-2)(q-3)|\P\S\L(2,q)_B|
=k(k-1)(k-2)(k-3)(k-4)$, where $a\in\{1,2\}$.
Since $|\P\S\L(2,q)_B|=2c$, cancellation of $c$ gives
$2a(q-2)(q-3)=t(k-1)(k-2)(k-3)(k-4)$.

Moreover, the two equations give $k\mid24ac$.
Hence $t\mid24a$. Since $a\in\{1,2\}$ and $t$ is odd,
we have $t\in\{1,3\}$. Reducing the preceding equation modulo $c$
and using $q\equiv-1\pmod c$ gives $c\mid24(t-a)$.
Thus, except when $(a,t)=(1,1)$, we have only the three
possibilities $(a,t)=(1,3),(2,1),(2,3)$, with respectively
$c\mid48$, $c\mid24$ and $c\mid24$. Direct substitution in the
corresponding equations, together with $k=ct$ and $k\mid q+1$,
gives no admissible solution.

It remains to consider $(a,t)=(1,1)$. In this case $k=c$ and
$2(q-2)(q-3)=(k-1)(k-2)(k-3)(k-4)$, which is precisely the
exceptional equation already analysed in Case~1. There the only
integral possibility is $(q,k)=(23,8)$. Hence $c=8$, but
$8$ divides neither $(q-1)/2=11$ nor $(q+1)/2=12$, contrary
to the subgroup conditions in the preceding remark. Therefore
$k=2cz+c$ cannot occur.

If $k=2cz+2$, equations~\eqref{eq3.9} and~\eqref{eq3.10}
give $k\mid24$ and $k\mid12$, respectively. Substitution of the
resulting finite possibilities, together with the corresponding
dihedral subgroup conditions, gives no solution.

Finally, if $k=2cz+c+2$, equations~\eqref{eq3.9} and
\eqref{eq3.10} give $k\mid48$ and $k\mid24$, respectively.
Again, substitution of the resulting possibilities gives no
admissible solution. The case $c=2$ is included in these
arguments and yields no additional possibility.

Therefore, the dihedral case leaves only
$(q,k,\P\S\L(2,q)_B)=(23,8,D_8)$.

\vspace{2mm}

\textbf{Case 3}:
$\P\S\L(2,q)_B\cong E_{\bar q}$ or
$E_{\bar q}\rtimes C_c$.

\vspace{2mm}

Suppose first that
$\P\S\L(2,q)_B\cong E_{\bar q}$. Then
$k=\bar qz+1$ and $(k,\bar q)=1$. Hence
equations~\eqref{eq3.9} and~\eqref{eq3.10} imply respectively
$k\mid24$ and $k\mid12$. Thus
$k\in\{6,8,12,24\}$, and substitution yields no admissible
prime power $q$.

Suppose next that
$\P\S\L(2,q)_B\cong E_{\bar q}\rtimes C_c$. Then
$k=c\bar qz+1$ or $c\bar qz+\bar q+1$. The first possibility
again gives $k\mid24$ or $k\mid12$. For the second, using
$c\mid\bar q-1$, $c\mid q-1$ and $k\mid q+1$, the same
divisibility argument reduces the possible values to
$k\in\{6,8,12,24,48\}$. Substitution into
equations~\eqref{eq3.9} and~\eqref{eq3.10} excludes all of them.
Consequently, neither possibility can occur.

\vspace{2mm}

\textbf{Case 4}:
$\P\S\L(2,q)_B\cong\P\S\L(2,\bar q)$, where
$q=\bar q^m$ and $m>1$, or
$\P\S\L(2,q)_B\cong\P\G\L(2,\bar q)$, where
$q=\bar q^m$ and $m>1$ is even.

\vspace{2mm}

Since $p>3$ and $q=\bar q^m$, we have $\bar q\geq5$.
Put $h=|\P\S\L(2,q)_B|$.
Equations~\eqref{eq3.9} and~\eqref{eq3.10} can be written as
$a(q-2)(q-3)h=k(k-1)(k-2)(k-3)(k-4)$, where $a=2$ or $1$,
respectively. Since $k\mid q+1$, reduction modulo $k$ gives
$k\mid12ah$.

Suppose first that $m$ is odd. Then the subgroup is
$\P\S\L(2,\bar q)$, and the preceding remark gives
$h=\bar q(\bar q^2-1)/2$ and
$k=(\bar q+1)(rz+1)$, where $r=\bar q(\bar q-1)/2$.
Thus $k\mid12ah$ implies $rz+1\mid12ar$.
Since $(rz+1,r)=1$, we obtain $rz+1\mid12a$, and hence
$rz+1\mid24$.
If $z\geq1$, then $r\geq10$, so $rz+1\geq11$ and therefore
$rz+1\in\{12,24\}$.
This gives $rz\in\{11,23\}$, which is impossible because
$r=\bar q(\bar q-1)/2$ is composite.
Hence $z=0$ and $k=\bar q+1$.
Substitution into the corresponding equation gives
$a(q-2)(q-3)=2(\bar q-2)(\bar q-3)$.
However, $m>1$ is odd, so $q\geq\bar q^3$, and hence
$a(q-2)(q-3)>2(\bar q-2)(\bar q-3)$, a contradiction.

Suppose now that $m$ is even. For the subgroups
$\P\S\L(2,\bar q)$ and $\P\G\L(2,\bar q)$, respectively, we have
$h=\bar q(\bar q^2-1)/2$ and $h=\bar q(\bar q^2-1)$.
In both cases, the preceding remark gives
$k=hz+\bar q^2+1$.
Since $\bar q$ is odd, we have
$(k,h)=(\bar q^2+1,h)=2$.
Consequently, $k\mid12ah$ implies $k/2\mid12a$, and hence
$k\mid48$.
Together with $k\geq\bar q^2+1$ and $\bar q\geq5$, this forces
$\bar q=5$.
Now $h=60$ or $120$, so $k=hz+26\leq48$ gives $z=0$ and
$k=26$, contrary to $k\mid48$.
Thus both subfield subgroup cases are excluded.

\vspace{2mm}

\textbf{Case 5}: $\P\S\L(2,q)_B\cong\A_4$, $\S_4$ or $\A_5$.

\vspace{2mm}

By the preceding remark, the possible values of $k$ are reduced to
the following finite sets:
\begin{center}
\begin{tabular}{c|c}
\hline
$\P\S\L(2,q)_B$ & $k$\\
\hline
$\A_4$ & 6,8,12,16,18,24,36,48,72,144\\
$\S_4$ & 6,8,18,24,32,48,72,96,144,288\\
$\A_5$ & 6,10,12,20,30,60,72,80,90,120,180,240,360,720\\
\hline
\end{tabular}
\end{center}
Substitution into equations~\eqref{eq3.9} and~\eqref{eq3.10},
together with $5<k<q+1$, gives the following arithmetic
possibilities.
\begin{center}
\begin{tabular}{c|c}
\hline
$\P\S\L(2,q)_B$ & Result\\
\hline
$\A_4$ & only $k=6$, $q=8$, contrary to $p>3$\\
$\S_4$ & only $k=6$, $q=8$, contrary to $p>3$\\
$\A_5$ & $(q,k)=(6,6)$ or $(380,30)$;\\
\hline
\end{tabular}
\end{center}

Hence none of the exceptional subgroup cases can occur.

Consequently, when $p>3$, the only remaining possibilities are
\begin{equation*}
(q,k,\P\S\L(2,q)_B)=(11,6,C_5)
\quad\text{and}\quad
(q,k,\P\S\L(2,q)_B)=(23,8,D_8).
\end{equation*}
Since $q=11$ or $23$ is prime, we have
$\P\Gamma\L(2,q)=\P\G\L(2,q)$. As
$\P\S\L(2,q)<G\leq\P\Gamma\L(2,q)$, it follows that
$G=\P\G\L(2,q)$.

Both possibilities occur. Let $G=\P\G\L(2,11)$ act naturally on
$\mathbb P^1(11)$, and let $\mathcal B$ be the $G$-orbit of
$B=\{0,2,3,6,9,10\}$. A direct calculation shows that
$|\mathcal B|=264$ and that every $5$-subset of
$\mathbb P^1(11)$ is contained in exactly two members of
$\mathcal B$. Thus $(\mathbb P^1(11),\mathcal B)$ is a
block-transitive $5$-$(12,6,2)$ design, with $G_B\cong C_5$;

Similarly, let $G=\P\G\L(2,23)$ act naturally on
$\mathbb P^1(23)$, and let $\mathcal B$ be the $G$-orbit of
$B_+=\{\infty,0,1,3,12,15,21,22\}$. This orbit has length
$1518$, and every $5$-subset of $\mathbb P^1(23)$ is contained
in exactly two of its members. Hence it yields a block-transitive
$5$-$(24,8,2)$ design, with $G_{B_+}\cong D_8$, where $D_8$
denotes the dihedral group of order $8$ \cite{Betten}.
\end{proof}

\begin{lemma}
Suppose that Case (B2) holds with $d=2$ and
$\P\S\L(2,q)<G\leq\P\Gamma\L(2,q)$, where $q=3^e>3$.
Then no non-trivial block-transitive $5$-$(q+1,k,2)$ design
satisfying $k\mid q+1$ exists.
\end{lemma}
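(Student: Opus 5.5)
I will reduce to arithmetic exactly as in Lemma~\ref{lem3.8}, now with $q=3^e$, $e\geq2$, and $n=2$. If $\P\S\L(2,q)$ is block-transitive, the lemma treating $G=N=\P\S\L(2,q)$ applies, since every block orbit of $\P\S\L(2,q)$ is a $5$-design, and no design arises. So assume $\P\S\L(2,q)$ is not block-transitive. Put $G^*=G\cap(\P\S\L(2,q)\rtimes\langle\tau_\alpha\rangle)$. Then $[G:G^*]\leq2$.

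The obstacle is that the Frobenius subline $\mathbb P^1(3)$ has only $4$ points, so the trick of Lemma~\ref{lem3.8} cannot be copied directly. I would instead use the subline $\mathbb P^1(9)$, which has $10$ points and is fixed pointwise by $\tau_\alpha^2$. When $e$ is even, $\mathbb F_9\subseteq\mathbb F_q$, and $\langle\tau_\alpha^2\rangle$ has index at most $2$ in $\langle\tau_\alpha\rangle$. A $5$-subset of $\mathbb P^1(9)$ then gives a block $B$ with $[G^*:\P\S\L(2,q)(G^*\cap G_B)]\leq4$, so $[G:\P\S\L(2,q)G_B]\in\{2,4,8\}$. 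When $e$ is odd, $\langle\tau_\alpha\rangle$ has odd order $e$ and fixes $\mathbb P^1(3)$ pointwise. In that case I would take a $5$-subset $S$ consisting of $\mathbb P^1(3)$ together with one extra point, chosen so that a Sylow $2$-part of the relevant field-automorphism group stabilizes $S$. Since the field-automorphism group has odd order and it, or its subgroup fixing $S$, acts on the two blocks through $S$ by Lemma~\ref{lem2.17}, it fixes both blocks. This again gives a small index. Either way, $b=[G:\P\S\L(2,q)G_B]\,[\P\S\L(2,q):\P\S\L(2,q)_B]$, and Lemma~\ref{lem2.1} gives
\[
a\,(q-2)(q-3)\,|\P\S\L(2,q)_B|=k(k-1)(k-2)(k-3)(k-4),\qquad a\in\{1/4,1/2,1,2\},
\]
possibly with a further factor coming from $e$ when $e$ is odd. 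Since $k\mid q+1$, we get $k\mid 12\cdot 4a\,|\P\S\L(2,q)_B|$.

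Next I would run through the subgroup types listed in the preceding remark, following Cases~1--5 of Lemma~\ref{lem3.8}. For $C_c$ and $D_{2c}$ with $k=cz$ or $k=2cz$, we have $z\mid 48$, and reducing modulo $c$ bounds $c$. The exceptional branch $k=c$ is handled by the same cubic substitution $q=uk-1$, which forces $q=23$; this is not a power of $3$. For $k=cz+1$, $cz+2$, $2cz+2$, $2cz+c+2$, and for the unipotent types $E_{\bar q}$ and $E_{\bar q}\rtimes C_c$, we get $k\mid 96$. This leaves a finite set of $k$, and I would check whether the resulting quadratic in $q$ has a root that is a power of $3$ with $k\mid q+1$.

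Two features of characteristic $3$ help here. First, $q\equiv 0\pmod 3$ gives $3\nmid q+1$, so $3\nmid k$, and Lemma~\ref{lem2.18} forces $k$ to be even. Second, because $q+1\equiv 2\pmod 4$ or $q+1\equiv 4\pmod 8$ (according to the parity of $e$), the $2$-part of $k$ is at most $4$, which removes most candidates immediately. For the subfield types $\P\S\L(2,3^f)$ and $\P\G\L(2,3^f)$, $\gcd$-arguments as in Case~4 of Lemma~\ref{lem3.8} give $k/2\mid 48$ with $k\geq \bar q+1$ or $k\geq\bar q^2+1$. Combined with $3\nmid k$ and the restriction $v_2(k)\leq2$, the surviving values are eliminated by direct substitution. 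For $\A_4$, $\S_4$ and $\A_5$, the tables of Lemmas~\ref{lem2.11}--\ref{lem2.13} in the case $3\mid q$ give $k\in\{4,10,\dots\}$ patterns. Intersected with the divisibility constraints, only a few $k$ remain, and these are checked by substitution.

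I expect the main difficulty to be the first step, namely bounding $[G:\P\S\L(2,q)G_B]$ when $e$ is odd. If the $5$-subset argument proves awkward, my fallback is to keep that index as an unknown divisor $i$ of $2e$. The resulting equation gives $q^2\lesssim i\,k^5/|\P\S\L(2,q)_B|$. Combining this with $k<\sqrt{2q}+4$ from Corollary~\ref{cor2.3} and with $i\leq 2\log_3 q$ still leaves finitely many configurations for each subgroup type, which I would then dispose of by the same substitution checks.
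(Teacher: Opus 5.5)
Your plan follows the paper's proof. You bound $\varepsilon=[G:\P\S\L(2,q)G_B]$ using a pointwise-fixed subline and obtain $4(q-2)(q-3)|\P\S\L(2,q)_B|=\varepsilon k(k-1)(k-2)(k-3)(k-4)$, hence $k\mid 48|\P\S\L(2,q)_B|$. You then eliminate the subgroup types using $3\nmid k$, $8\nmid k$ and Lemma~\ref{lem2.18}.

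The paper avoids your parity split: it lets $s$ be the least prime divisor of $e$ and uses the whole subline $\mathbb P^1(3^s)$. For $e$ even this is your $\mathbb P^1(9)$. For $e$ odd your construction agrees with the paper's only if you take the fifth point in $\mathbb F_{3^s}$. The Sylow $2$ remark is vacuous there, since $G\cap\langle\tau_\alpha\rangle$ has odd order. Also, your $a$ is $4/\varepsilon$ with $\varepsilon\in\{2,s,2s,4,8\}$. So the uniform consequence is $k\mid48|\P\S\L(2,q)_B|$, not $k\mid12\cdot4a|\P\S\L(2,q)_B|$, which is not even integral when $a=4/s$.

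The step that does not close as written is finiteness when $\varepsilon$ may grow with $e$, i.e.\ for odd $e$, or under your fallback $\varepsilon\mid2e$. Take the branch $k=cz$, or $k=2cz$, which gives the same equation, with $z\in\{1,2,4\}$. Then $4(q-2)(q-3)=\varepsilon z(k-1)(k-2)(k-3)(k-4)$, so $q^2\approx\varepsilon zk^4/4$. This is compatible with $k<\sqrt{2q}+4$. Hence Corollary~\ref{cor2.3} together with $\varepsilon\le2\log_3q$ bounds nothing, contrary to your fallback. Likewise, ``reducing modulo $c$'' only gives an $e$-dependent bound, so ``a finite set of $k$'' is not yet justified.

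The missing step is the paper's. If $\varepsilon z\neq2$, reduction modulo $k$ gives $k\mid24(\varepsilon z-2)$, so $k<240e$. Then $q^2<6ek^4<6e(240e)^4$ with $q=3^e$ forces $e\le17$, after which the check is genuinely finite. The case $\varepsilon z=2$ is the exceptional equation, which yields only $q=23$.

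With this inserted, your fallback is in fact a valid simplification, because $\varepsilon$ divides $[G:\P\S\L(2,q)]\mid2e$ for free. Every later step uses only $2\le\varepsilon\le2e$. This includes the $\A_5$ equation $(q-2)(q-3)=126\varepsilon$. It also includes the subfield case $m$ odd, $z=0$, $k=\bar q+1$, which needs the size comparison $2(q-2)(q-3)=\varepsilon(\bar q-2)(\bar q-3)$ with $q\ge\bar q^3$, not $k/2\mid48$. So the subline argument only trims the list of $\varepsilon$ values for the final computation.
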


\begin{proof}
In this case, $e>1$, $q$ is odd and $n=2$.
Let $\tau_\alpha$ be induced by the Frobenius automorphism
$\alpha:x\mapsto x^3$, and put
\begin{equation*}
G^*=G\cap\bigl(\P\S\L(2,q)\rtimes\langle\tau_\alpha\rangle\bigr).
\end{equation*}
Let $s$ be the smallest prime divisor of $e$, and set
$H=\langle\tau_\alpha^s\rangle$. Then $G\cap H$ fixes pointwise
the $3^s+1$ points of the subline $\mathbb P^1(3^s)$.

Choose five points of this subline. Since $\lambda=2$, these five
points are contained in precisely two blocks. The group $G\cap H$
either fixes these two blocks individually or interchanges them.

Suppose first that $G\cap H$ fixes these two blocks individually.
Applying Dedekind's modular law as in the proof of Lemma~3.8, the
number of $\P\S\L(2,q)$-orbits on $\mathcal B$ is one of
$1$, $s$, $2$ and $2s$. The value $1$ means that
$\P\S\L(2,q)$ is block-transitive, which is impossible by
Lemma~3.7.

Suppose next that $G\cap H$ interchanges these two blocks.
Then the stabilizer of either block has index $2$ in $G\cap H$.
Hence $G\cap H$ has even order. Since $|H|=e/s$, this implies
$s=2$ and $4\mid e$. The number of $\P\S\L(2,q)$-orbits on
$\mathcal B$ is then one of $2$, $4$ and $8$.

Consequently, in every case not previously excluded, we have
\begin{equation}\tag{3.11}\label{eq3.11}
4(q-2)(q-3)|\P\S\L(2,q)_B|
=
\varepsilon k(k-1)(k-2)(k-3)(k-4),
\end{equation}
where $\varepsilon\in\{2,s,2s,4,8\}$ and
$\varepsilon\leq2e$. The value $8$ can occur only when $s=2$
and $4\mid e$.

Since $k\mid q+1$, we have $q\equiv-1\pmod{k}$, and hence
$(q-2)(q-3)\equiv12\pmod{k}$. Equation~\eqref{eq3.11} therefore
gives $k\mid48|\P\S\L(2,q)_B|$. Moreover, since
$k\mid3^e+1$, we have $3\nmid k$ and $8\nmid k$.

We first consider $|\P\S\L(2,q)_B|=1$. In this case
$k\mid48$. Every divisor of $48$ greater than $5$ is divisible
by $3$ or $8$, a contradiction. Hence
$|\P\S\L(2,q)_B|>1$. By the preceding remark, it remains to
consider the following possibilities.

\vspace{2mm}

\textbf{Case 1}: $\P\S\L(2,q)_B\cong C_c$, where $c\geq2$.

\vspace{2mm}

By the preceding remark, the possible values of $k$ are
$cz$, $cz+1$ and $cz+2$.

If $k=cz+1$, then $(k,c)=1$. Since $k\mid48c$, it follows that
$k\mid48$, which is impossible. If $k=cz+2$, then $(k,c)\mid2$,
and hence $k\mid96$. Every divisor of $96$ greater than $5$ is
divisible by $3$ or $8$, so this possibility is also excluded.

It remains to consider $k=cz$. Since $k\mid48c$, we have
$z\mid48$. As $z\mid k$, the conditions $3\nmid k$ and
$8\nmid k$ imply that $z\in\{1,2,4\}$. Substituting
$|\P\S\L(2,q)_B|=c$ and $c=k/z$ into
equation~\eqref{eq3.11}, we obtain
\begin{equation}\tag{3.12}\label{eq3.12}
4(q-2)(q-3)
=
\varepsilon z(k-1)(k-2)(k-3)(k-4).
\end{equation}

Suppose first that $\varepsilon z=2$. Then $\varepsilon=2$ and
$z=1$, so equation~\eqref{eq3.12} becomes
$2(q-2)(q-3)=(k-1)(k-2)(k-3)(k-4)$.
By the calculation for the exceptional equation in Case~1 of the
proof of Lemma~\ref{lem3.8}, the only solution satisfying
$5<k<q+1$ and $k\mid q+1$ is $(q,k)=(23,8)$.
This is impossible since $q=3^e$.

We may therefore assume that $\varepsilon z\neq2$. Reducing
equation~\eqref{eq3.12} modulo $k$ gives
$k\mid24(\varepsilon z-2)$. Since $\varepsilon\leq2e$ and
$z\leq4$, we have $k<240e$. For $q\geq9$,
$(q-2)(q-3)>q^2/3$, so equation~\eqref{eq3.12} gives
$q^2<6ek^4<6e(240e)^4$. Since $q=3^e$, this is impossible
for $e\geq18$. Thus $2\leq e\leq17$.

For each $e$ in this range, let $s$ be the smallest prime divisor
of $e$. Then $\varepsilon\in\{2,s,2s,4,8\}$,
$z\in\{1,2,4\}$, and
\begin{equation*}
k\mid\bigl(3^e+1,24(\varepsilon z-2)\bigr).
\end{equation*}
A direct check of these finitely many possibilities gives no
$k>5$ satisfying equation~\eqref{eq3.12}. Consequently, the
cyclic case cannot occur.

\vspace{2mm}

\textbf{Case 2}: $\P\S\L(2,q)_B\cong D_{2c}$, where $c\geq2$.

\vspace{2mm}

By the preceding remark, the possible values of $k$ are
$2cz$, $2cz+c$, $2cz+2$ and $2cz+c+2$.

Suppose first that $k=2cz$. Since $k\mid96c$, we have
$z\mid48$. As $z\mid k$, the conditions $3\nmid k$ and
$8\nmid k$ imply that $z\in\{1,2,4\}$. Substituting
$c=k/(2z)$ into equation~\eqref{eq3.11} again gives
equation~\eqref{eq3.12}, so this possibility has already been
excluded in Case~1.

If $k=2cz+c$, then the preceding remark gives
$q\equiv1\pmod4$ and $c\mid(q+1)/2$.
Thus $k$ is odd.
Since $k\mid3^e+1$, we also have $3\nmid k$,
contrary to Lemma~\ref{lem2.18}.

If $k=2cz+2$, then $(k,2c)=2$, so $k/2\mid96$. This is
incompatible with $k>5$, $3\nmid k$ and $8\nmid k$.
If $k=2cz+c+2$, then $(k,c)\mid2$. Since $k\mid96c$, it follows
that $k\mid192$. Again every divisor of $192$ greater than $5$
is divisible by $3$ or $8$, so this possibility is impossible.
Therefore, the dihedral case cannot occur.

\vspace{2mm}

\textbf{Case 3}:
$\P\S\L(2,q)_B\cong E_{\bar q}$ or
$E_{\bar q}\rtimes C_c$.

\vspace{2mm}

Suppose first that $\P\S\L(2,q)_B\cong E_{\bar q}$. By the
preceding remark, $k=\bar qz+1$. Since $(k,\bar q)=1$ and
$k\mid48\bar q$, we obtain $k\mid48$, a contradiction.

Suppose next that
$\P\S\L(2,q)_B\cong E_{\bar q}\rtimes C_c$. Then
$k=c\bar qz+1$ or $c\bar qz+\bar q+1$. In the first case,
$(k,c\bar q)=1$, so $k\mid48$. In the second case,
$(k,\bar q)=1$ and, since $c\mid\bar q-1$, we have
$(k,c)\mid2$. Hence $k\mid96$. Both conclusions are incompatible
with $k>5$, $3\nmid k$ and $8\nmid k$. Consequently, neither
possibility can occur.

\vspace{2mm}

\textbf{Case 4}:
$\P\S\L(2,q)_B\cong\P\S\L(2,\bar q)$, where
$q=\bar q^m$ and $\bar q=3^d$, or
$\P\S\L(2,q)_B\cong\P\G\L(2,\bar q)$, where
$q=\bar q^m$, $\bar q=3^d$ and $m>1$ is even.

\vspace{2mm}

Suppose first that
$\P\S\L(2,q)_B\cong\P\S\L(2,\bar q)$. If $m$ is odd, then
by the preceding remark,
\begin{equation*}
k=(\bar q+1)
\left(\frac{\bar q(\bar q-1)}{2}z+1\right).
\end{equation*}
Since $k\mid48|\P\S\L(2,\bar q)|$, we obtain
$\bar q(\bar q-1)z/2+1\mid48$. If $z\geq1$, the only
possibilities are $(\bar q,z)=(3,1)$ and $(3,5)$, giving
$k=16$ and $64$, respectively. Both are divisible by $8$.

If $z=0$, then $k=\bar q+1$. For $\bar q=3$, this gives
$k=4$, contrary to $k>5$. For $\bar q\geq9$,
equation~\eqref{eq3.11} reduces to
$2(q-2)(q-3)=\varepsilon(\bar q-2)(\bar q-3)$.
Since $m>1$ is odd, we have $m\geq3$. The left-hand side is
greater than $q^2=\bar q^{2m}$, whereas the right-hand side is
less than $2e\bar q^2$. Since $\bar q^{2m-2}>2e$, this is
impossible.

Suppose now that $m$ is even. By the preceding remark,
$k=(\bar q^3-\bar q)z/2+\bar q^2+1$. Since
$\bigl((\bar q^3-\bar q)/2,\bar q^2+1\bigr)=2$, we have
$\bigl(k,(\bar q^3-\bar q)/2\bigr)=2$. The divisibility
$k\mid48|\P\S\L(2,\bar q)|$ gives $k/2\mid48$, and hence
$\bar q\in\{3,9\}$. For $\bar q=3$, we have $k/2=6z+5$,
which cannot divide $48$. For $\bar q=9$, the case $z=0$
gives $k/2=41\nmid48$, while $z\geq1$ gives $k/2>48$.
Thus this case cannot occur.

Suppose finally that
$\P\S\L(2,q)_B\cong\P\G\L(2,\bar q)$, where $m>1$ is even.
By the preceding remark,
$k=(\bar q^3-\bar q)z+\bar q^2+1$. Since
$(\bar q^3-\bar q,\bar q^2+1)=2$, we have
$(k,\bar q^3-\bar q)=2$. Hence
$k\mid48(\bar q^3-\bar q)$ gives $k/2\mid48$, and again
$\bar q\in\{3,9\}$. For $\bar q=3$, we have
$k/2=12z+5$, which cannot divide $48$. For $\bar q=9$,
the case $z=0$ gives $k/2=41\nmid48$, while $z\geq1$ gives
$k/2>48$. Therefore, this case cannot occur.

Consequently, neither of the two subfield subgroup cases can occur.

\vspace{2mm}

\textbf{Case 5}: $\P\S\L(2,q)_B\cong\A_4$, $\S_4$ or $\A_5$.

\vspace{2mm}

Suppose first that $\P\S\L(2,q)_B\cong\A_4$ or $\S_4$.
In the former case, $k\mid576$, while in the latter
$k\mid1152$. Every divisor greater than $5$ of either integer is
divisible by $3$ or $8$, contrary to $3\nmid k$ and $8\nmid k$.
Hence both possibilities are excluded.

It remains to consider $\P\S\L(2,q)_B\cong\A_5$. In this case
$k\mid2880$, and the conditions $k>5$, $3\nmid k$ and
$8\nmid k$ give $k\in\{10,20\}$. By Lemma~\ref{lem2.13}, when
$q$ is a power of $3$, there is one orbit of length $10$, while
the other non-regular orbit, when it occurs, has length $12$, and
all remaining orbits have length $60$. Hence $k=20$ cannot be a
union of such orbits, and so $k=10$.

The existence of an $\A_5$ subgroup in $\P\S\L(2,3^e)$ implies
that $e$ is even. Thus $s=2$ and
$\varepsilon\in\{2,4,8\}$. Substituting
$|\P\S\L(2,q)_B|=60$ and $k=10$ into
equation~\eqref{eq3.11} gives
$(q-2)(q-3)=126\varepsilon$. Hence
$(q-2)(q-3)\in\{252,504,1008\}$, with discriminants
$1009$, $2017$ and $4033$, respectively. None is a perfect
square, so the $\A_5$ case is also excluded.

Consequently, no non-trivial block-transitive
$5$-$(q+1,k,2)$ design satisfying $k\mid q+1$ can occur.
\end{proof}

\begin{lemma}
Suppose that Case (B2) holds with $d=2$ and
$\P\S\L(2,q)<G\leq\P\Gamma\L(2,q)$, where $q=2^e>3$.
Then no non-trivial block-transitive $5$-$(q+1,k,2)$ design
satisfying $k\mid q+1$ exists.
\end{lemma}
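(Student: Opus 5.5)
Since $q=2^e$, we have $v=q+1$ odd, so $k\mid q+1$ forces $k$ odd. By Lemma~\ref{lem2.18}, $3\mid k$, and hence $3\mid 2^e+1$, so $e$ is odd. Here $n=1$ and $\P\G\L(2,q)=\P\S\L(2,q)$, so $\P\Gamma\L(2,q)=\P\S\L(2,q)\rtimes\langle\tau_\alpha\rangle$ with $|\langle\tau_\alpha\rangle|=e$ odd. I follow the scheme of the $q=3^e$ lemma. By Lemma~3.7, $\P\S\L(2,q)$ is not block-transitive. Let $s$ be the smallest prime divisor of $e$ (odd), and put $H=\langle\tau_\alpha^s\rangle$. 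Then $G\cap H$ fixes the subline $\mathbb P^1(2^s)$ pointwise; it has at least $9$ points once $s\geq3$, and the case $e$ prime (so $s=e$) I handle by taking $H=\langle\tau_\alpha\rangle$ fixing $\mathbb P^1(2)$, which only has $3$ points.

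Because of this, I would instead use a direct counting argument. The number $\varepsilon$ of $\P\S\L(2,q)$-orbits on $\mathcal B$ divides $[G:\P\S\L(2,q)]$, which divides $e$. It is therefore odd and at most $e$, and it is greater than $1$. This gives
\[
2(q-2)(q-3)|\P\S\L(2,q)_B|=\varepsilon\,k(k-1)(k-2)(k-3)(k-4).
\]
Reducing modulo $k$ with $q\equiv-1$ gives $k\mid 24|\P\S\L(2,q)_B|$. Since $k$ is odd, $k\mid 3|\P\S\L(2,q)_B|_{2'}$. A $2$-adic comparison is then decisive. The left side has $2$-part $2\cdot 2\cdot|\P\S\L(2,q)_B|_2$, because $q-2\equiv2\pmod4$ and $q-3$ is odd. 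The right side, with $k$ odd, has $2$-part $|(k-1)(k-3)|_2\geq 8$, since $k-1$ and $k-3$ are consecutive even numbers. Hence $|\P\S\L(2,q)_B|_2\geq2$.

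Next I run through the subgroup types of the preceding remark with $q$ even. If $\P\S\L(2,q)_B$ is trivial, the $2$-adic bound already fails. For $C_c$, $c$ is odd, so the same bound excludes it. For $D_{2c}$, the remark gives $k=2cz+c$ or $2cz+c+2$. The first gives $k=c(2z+1)$; cancelling $c$, the $2$-parts read $4\cdot2=8$ versus $|(k-1)(k-3)|_2\geq8$, so equality forces $k\equiv 3,5\pmod 8$-type constraints. Reducing modulo $c$ then gives $c\mid 24\varepsilon$-type bounds, with $z\in\{0,1\}$ as in Lemma~3.7. The form $2cz+c+2$ is odd only if $c$ is odd; in that case $(k,c)\mid 2$ and $k\mid 48$, so $k=3$, which is impossible. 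The types $E_{\bar q}$ and $E_{\bar q}\rtimes C_c$ have $k=\bar qz+1$ or $c\bar qz+\bar q+1$, coprime to $\bar q$ and sharing at most a factor $2$ with $c$. This yields $k\mid 3$ or a small odd bound, which is impossible. For subfield subgroups $\P\S\L(2,\bar q)$ with $m$ odd, we get $k=(\bar q+1)(\bar q(\bar q-1)z+1)$ with $\bar q(\bar q-1)z+1\mid 3$, so $z=0$ and $k=\bar q+1$. The equation then becomes $2(q-2)(q-3)=\varepsilon(\bar q-2)(\bar q-3)$ with $\varepsilon\leq e$, impossible since $q\geq\bar q^3$. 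The case $m$ even and $\P\G\L(2,\bar q)$ do not arise because $e$ is odd. $\A_4$ with $q$ even is excluded in the remark, $\S_4$ does not occur, and $\A_5\cong\P\S\L(2,4)$ needs $e$ even.

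The main obstacle is the dihedral case $k=c$ or $k=3c$, where $\varepsilon$ is an unknown odd divisor of $e$. There I would reduce modulo $c$ to get $c\mid 24(\varepsilon\cdot\text{const}-\ldots)$, bounding $k$ linearly in $e$. Then $q^2<e\,k^4$ forces $e$ to be small, roughly $e\leq 40$, and a finite check over odd $e$ with $k\mid 2^e+1$ finishes the argument.
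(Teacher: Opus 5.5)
Your route to the basic equation works and is simpler than the paper's. The number $\varepsilon=[G:\P\S\L(2,q)G_B]$ of $\P\S\L(2,q)$-orbits on $\mathcal B$ divides $[G:\P\S\L(2,q)]$, hence divides $e$. So it is odd, at most $e$, and greater than $1$ by Lemma~3.7. The paper instead fixes five points of a subline to get $\varepsilon=s$ prime, but its later estimates only use that $s$ is odd and $s\le e$. The obstacle you raise for $e$ prime is not real anyway: the relevant power of $\tau_\alpha$ is then trivial and fixes every point. Your uniform $2$-adic bound $|\P\S\L(2,q)_B|_2\ge2$, together with $k\mid3|\P\S\L(2,q)_B|_{2'}$, correctly disposes of the trivial, cyclic, $E_{\bar q}$ and $E_{\bar q}\rtimes C_c$ cases.

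The gap is in the subfield case. From $k=(\bar q+1)w$ with $w=\bar q(\bar q-1)z+1$ you correctly get $w\mid3$, but $w=3$ is attainable. This happens for $(\bar q,z)=(2,1)$, i.e.\ $\P\S\L(2,q)_B\cong\P\S\L(2,2)\cong\S_3$, whose orbits have lengths $3,6,6,\dots$. It gives $k=9$, which divides $2^e+1$ for every $e\equiv3\pmod6$. So ``hence $z=0$'' is false, and this case is never treated. It is easy to close with your own tools. Here $|\P\S\L(2,q)_B|_2=2$, so the left-hand side has $2$-part exactly $8$, while $(k-1)(k-3)=8\cdot6$ has $2$-part $16$. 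Alternatively, $\P\S\L(2,2)\cong D_6$ with $c=3\mid q+1$, so it falls under your dihedral case with $t=3$; or argue as the paper does ($m=3$ fails directly, $m\ge9$ by size).

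Two smaller corrections in the dihedral case. First, equality of $2$-parts forces $|(k-1)(k-3)|_2=8$, i.e.\ $k\equiv5,7\pmod8$, not $3,5$. Second, reduction modulo $c$ gives $c\mid24(\varepsilon t-2)$. This bounds $c$ only because $\varepsilon t$ is odd and hence $\varepsilon t\neq2$, so say that explicitly. The remaining finite check is deferred, as in the paper.
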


\begin{proof}
In this case, $e>2$ and
$\P\G\L(2,q)=\P\S\L(2,q)$.
Since $k\mid q+1$, the integer $k$ is odd.
By Lemma~\ref{lem2.18}, we have $3\mid k$.
Hence $3\mid2^e+1$, and therefore $e$ is odd.
By Lemma~3.7, $\P\S\L(2,q)$ is not block-transitive on
$\mathcal B$.

The quotient $G/\P\S\L(2,q)$ is a non-trivial cyclic group
induced by field automorphisms. Choose a maximal subgroup of its
field-automorphism part having prime index $s$. This subgroup fixes
pointwise a projective subline containing at least five points.

Choose five such fixed points. Since $\lambda=2$, they are
contained in precisely two blocks. Since $e$ is odd, the
field-automorphism group has odd order, and hence the chosen
subgroup cannot interchange these two blocks. It therefore fixes
them individually. The number of $\P\S\L(2,q)$-orbits on
$\mathcal B$ divides $s$. Since $\P\S\L(2,q)$ is not
block-transitive and $s$ is prime, this number is exactly $s$.
Consequently,
\begin{equation}\tag{3.13}\label{eq3.13}
2(q-2)(q-3)|\P\S\L(2,q)_B|
=
s k(k-1)(k-2)(k-3)(k-4).
\end{equation}
Here $s\mid e$, and in particular $s$ is odd and $s\leq e$.

Since $k\mid q+1$, we have $q\equiv-1\pmod{k}$. Reducing
\eqref{eq3.13} modulo $k$ gives
$k\mid24|\P\S\L(2,q)_B|$.

If $|\P\S\L(2,q)_B|=1$, then $k\mid24$. Since $k$ is odd
and $k>5$, this is impossible. Hence
$|\P\S\L(2,q)_B|>1$, and by the preceding remark it remains to
consider the following possibilities.

\vspace{2mm}

\textbf{Case 1}: $\P\S\L(2,q)_B\cong C_c$, where $c\geq2$.

\vspace{2mm}

Since $c\mid q-1$ or $q+1$, the integer $c$ is odd.
As $q\geq8$, we have
\[
2(q-2)(q-3)c\equiv4\pmod8.
\]
However, $k$ is odd, so
$8\mid(k-1)(k-3)$.
This contradicts equation~\eqref{eq3.13}.
Thus the cyclic case cannot occur.

\vspace{2mm}

\textbf{Case 2}: $\P\S\L(2,q)_B\cong D_{2c}$, where $c\geq2$.

\vspace{2mm}

Suppose first that $c\mid q+1$. Since $k$ is odd, the only
possible form in the preceding remark is $k=c(2z+1)$. The
divisibility $k\mid48c$ gives $2z+1\mid48$, and hence
$2z+1\in\{1,3\}$. Put $t=2z+1$. Equation~\eqref{eq3.13}
becomes
$4(q-2)(q-3)=st(k-1)(k-2)(k-3)(k-4)$.

Since both $s$ and $t$ are odd, we have $st\neq2$. Reducing
the preceding equation modulo $k$ gives
$k\mid24(st-2)$, and hence $k<72e$. In particular,
$k<144e$. The preceding equation then yields
$q^2<6e(144e)^4$, which is impossible for $e\geq28$.
Since $e$ is odd, it remains only to consider
$e\in\{3,5,7,\ldots,27\}$. For each such $e$, we have
$s\mid e$, $t\in\{1,3\}$ and
$k\mid(2^e+1,24(st-2))$. Direct substitution gives no solution.

Suppose now that $c\mid q-1$. Since $k$ is odd, the only possible
form is $k=c(2z+1)+2$. As $c$ is odd, $(k,c)=1$. Hence
$k\mid48c$ gives $k\mid48$, which has no odd divisor greater
than $5$. Thus the dihedral case is excluded.

\vspace{2mm}

\textbf{Case 3}:
$\P\S\L(2,q)_B\cong E_{\bar q}$ or
$E_{\bar q}\rtimes C_c$.

\vspace{2mm}

Suppose first that $\P\S\L(2,q)_B\cong E_{\bar q}$. By the
preceding remark, $k=\bar qz+1$. Since $\bar q$ is a power of
$2$, we have $(k,\bar q)=1$. The divisibility
$k\mid24\bar q$ therefore gives $k\mid24$, which is impossible.

Suppose next that
$\P\S\L(2,q)_B\cong E_{\bar q}\rtimes C_c$. If
$k=c\bar qz+1$, then $(k,c\bar q)=1$, and hence $k\mid24$.
If $k=c\bar qz+\bar q+1$, then $(k,\bar q)=1$. Moreover,
$c\mid\bar q-1$ gives $k\equiv2\pmod c$. Since $c$ is odd,
$(k,c)=1$, and again $k\mid24$. Therefore, neither subgroup
structure can occur.

\vspace{2mm}

\textbf{Case 4}:
$\P\S\L(2,q)_B\cong\P\S\L(2,\bar q)$, where
$q=\bar q^m$ and $m>1$.

\vspace{2mm}

Write $\bar q=2^d$. Since $q=2^e=\bar q^m$, we have
$e=dm$. As $e$ is odd, $m$ is odd.
Also $|\P\S\L(2,\bar q)|=\bar q(\bar q^2-1)$. Since $k$ is
odd, the regular-orbit form and the form containing an orbit of
length $\bar q(\bar q-1)$ are even and hence impossible.
By the preceding remark, the only remaining form is
$k=(\bar q^3-\bar q)z+\bar q+1
=(\bar q+1)(\bar q(\bar q-1)z+1)$.

Since $k\mid24(\bar q^3-\bar q)$ and
$(\bar q(\bar q-1)z+1,\bar q(\bar q-1))=1$, we obtain
$\bar q(\bar q-1)z+1\mid24$. This integer is odd, and hence it
is equal to $1$ or $3$.

If $\bar q(\bar q-1)z+1=1$, then $z=0$ and
$k=\bar q+1$. For $\bar q=2$, this gives $k=3$. For
$\bar q\geq4$, equation~\eqref{eq3.13} reduces to
$2(q-2)(q-3)=s(\bar q-2)(\bar q-3)$.
Since $m>1$ is odd, we have $m\geq3$. The left-hand side is
greater than $q^2=\bar q^{2m}$, while the right-hand side is
less than $e\bar q^2$. This is impossible.

If $\bar q(\bar q-1)z+1=3$, then
$(\bar q,z)=(2,1)$ and $k=9$. The condition
$9\mid2^m+1$ gives $m\equiv3\pmod6$. For $m=3$,
equation~\eqref{eq3.13} fails directly, while for $m\geq9$
its left-hand side exceeds its right-hand side. Hence this
possibility is also excluded.

In characteristic $2$,
$\P\G\L(2,\bar q)=\P\S\L(2,\bar q)$, so no additional
subfield subgroup case arises.

\vspace{2mm}

\textbf{Case 5}: $\P\S\L(2,q)_B\cong\A_4$, $\S_4$ or $\A_5$.

\vspace{2mm}

If $\P\S\L(2,q)_B\cong\A_4$, then $e$ is even, contrary to the
preceding conclusion. Hence this case is impossible.

An $\S_4$ subgroup does not occur in $\P\S\L(2,2^e)$. An
$\A_5$ subgroup, when it occurs, is a subfield subgroup
isomorphic to $\P\S\L(2,4)$ and has already been considered in
Case~4. Therefore, no exceptional subgroup yields a design.

Consequently, no non-trivial block-transitive
$5$-$(q+1,k,2)$ design satisfying $k\mid q+1$ can occur.
\end{proof}

\begin{lemma}\label{lem3.11}
Suppose that Case (B2) holds with
$N=\P\S\L(d,q)$, where $d\geq3$. Then no non-trivial
block-transitive $5$-$(v,k,2)$ design satisfying $k\mid v$ exists.
\end{lemma}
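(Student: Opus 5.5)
Our plan is to follow the geometric argument used in the affine cases (A2)--(A4), now with projective subspaces and Lemma~\ref{lem2.17}. Here $G\leq\P\Gamma\L(d,q)$ preserves the projective geometry of $\mathcal P=\mathbb P^{d-1}(q)$ (modulo a possible graph automorphism, which does not preserve the point action and so cannot occur since $G$ acts on points), and $v=(q^d-1)/(q-1)$. By Corollary~\ref{cor2.3}, $k<\sqrt{2v}+4$, which is roughly $\sqrt2\,q^{(d-1)/2}+4$.

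First, take a line $\ell$ when $q+1\geq5$, or a plane $\pi$ when $q\in\{2,3\}$ (there $q^2+q+1\geq7$). Choose a $5$-subset $S$ inside it and let $B_1,B_2$ be the two blocks containing $S$. The pointwise stabilizer $K$ of the subspace $E$ in $\P\S\L(d,q)$ acts on $\mathcal P\setminus E$. If $\dim E=m$ (as a vector subspace), the $K$-orbits outside $E$ are the sets of points $\langle e+w\rangle$ with $w\notin E$. These are unions over the quotient: for a fixed $(m+1)$-space $F\supset E$, the points of $F\setminus E$ form one orbit of length $q^m$, and $K$ permutes such $F$ transitively. So every orbit has length about $q^{d-1}$ or more. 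In fact $K$ (containing elations) is transitive on $\mathcal P\setminus E$, giving orbit length $v-|E|$. For $d\geq3$ this exceeds $2k$, except for a finite list of small $(d,q)$ to be checked. By Lemma~\ref{lem2.17}, $B_1\cup B_2\subseteq E$. Since $G$ preserves projective dimension and is block-transitive, every block then lies in a line (respectively a plane). Now choose five points spanning a subspace of larger dimension: five points in general position if $d\geq3$ and $E$ is a line, and points spanning a $3$-space when $d\geq4$ and $E$ is a plane. Such a $5$-set lies in no block, a contradiction.

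The remaining configurations are the following. If $E$ is a line with $k\leq q+1$, we have $v=q^2+q+1$ for $d=3$. In that case I would rather use the divisibility $k\mid v$ together with $k\leq q+1$: a block lying in a line means a $2$-$(v,k,\lambda_2)$ subdesign in which two points on different lines are never together, which is impossible. This is already covered above, so the genuine exceptions are only those where the orbit-length inequality $v-|E|>2k$ fails. The case $d=3$, $q\in\{2,3\}$ with $E$ a plane equal to the whole space must instead be handled numerically. So the main obstacle is the finite list of small $(d,q)$, roughly $d=3$ with $q\leq 4$ and $d=4,5$ with $q=2$, where either the inequality fails or $E$ is too large. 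For these I would enumerate the divisors $k$ of $v$ with $5<k<\sqrt{2v}+4$, impose integrality of $\lambda_4,\lambda_3,\lambda_2$ from Lemma~\ref{lem2.1}, and test $b\mid|\P\Gamma\L(d,q)|$ via Lemma~\ref{lem2.16}. For example, $v=7,13,21,15,31$ leave essentially no admissible $k$. For instance, $v=15$, $k\in\{\}$ (no divisor $>5$ except $15$), $v=21$ gives $k=7$, which fails since $\lambda_4=34/3$, and $v=31$ is prime. I expect all cases to die by this arithmetic. If some survive, Lemma~\ref{lem2.18} ($2\mid k$ or $3\mid k$) together with $k\mid v$ should finish them.
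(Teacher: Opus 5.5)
Your argument is correct. It reaches the conclusion by a different and more uniform route than the paper.

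\textbf{The paper's route.} It uses only the elation group $T(\ell)$ with axis a line (for $d=3$), or $T(H)$ with axis a hyperplane (for $d>3$). It applies the bound $[T(H):T(H)_B]\leq 2$ to a single block, so it must split on the parity of $q$:
\begin{itemize}
\item for $q$ odd, the elation group fixes $B$;
\item for $q\geq 4$ even, it only gets $k\geq q^{d-1}/2+5$ and argues that $v/k$ would be an odd integer strictly between $1$ and $3$;
\item for $q=2$ it needs an extra divisibility argument via $k-4\mid 2(v-4)$.
\end{itemize}
These steps use $k\mid v$ rather than Corollary~\ref{cor2.3}. For $d=3$ the paper then detours through homologies with axis $\ell$ to force $q\in\{4,7,13,16,19\}$ and finishes with $\lambda_4$. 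For $d>3$ it proves $\langle B\rangle=\langle S\rangle$ and compares the spans of two $5$-sets.

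\textbf{Your route.} You take the full pointwise stabiliser in $\P\S\L(d,q)$ of a line, or of a plane when $q\leq 3$. This group is transitive on the complement. You then apply Lemma~\ref{lem2.17} to the invariant set $B_1\cup B_2$ rather than to one block. This removes all the parity and index-two bookkeeping. The cost is invoking $k<\sqrt{2v}+4$ and checking a short exceptional list. That list is in fact just $v\in\{7,13,15,21\}$; for $v=31$ the orbit inequality already holds. You dispose of these cases correctly.

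Your observation that block-transitivity places every block inside an image of $E$ also shows that the paper's homology step for $d=3$ is unnecessary. The paper's own first step (a block containing five points of $\ell$ lies in $\ell$) already yields this.

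Two small edits:
\begin{itemize}
\item Drop the muddled paragraph about a ``$2$-subdesign''; it plays no role.
\item Justify $G\leq\P\Gamma\L(d,q)$. An element inducing a graph automorphism would conjugate the point stabiliser $P_1$ to $P_{d-1}$, which is not conjugate to $P_1$ for $d\geq 3$. The paper also takes this for granted.
\end{itemize}
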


\begin{proof}
In this case, the point set is the set of points of
$\mathbb P^{d-1}(q)$ and $v=(q^d-1)/(q-1)$.

For every $5$-subset $S$, there are precisely two blocks
containing $S$. If a subgroup $L\leq G$ fixes $S$ pointwise and
$B$ is one of these blocks, then $[L:L_B]\leq2$. We shall also
use the derived-parameter condition $k-4\mid2(v-4)$.

\vspace{2mm}

\textbf{Case 1}: $d=3$.

\vspace{2mm}

Here the point set is the projective plane $\mathbb P^2(q)$ and
$v=q^2+q+1$. Since $v$ is odd and $k\mid v$, the block size
$k$ is also odd. By Lemma~\ref{lem2.18}, we have
$\gcd(k,6)>1$, and hence $3\mid k$. Since
$k\mid q^2+q+1$, it follows that $q\equiv1\pmod3$.

We first show that if a block contains five points of a line
$\ell$, then it is contained in $\ell$.

Let $x_1,\ldots,x_5$ be five distinct points of $\ell$, and let
$B_1,B_2$ be the two blocks containing them. Let $T(\ell)$ be
the translation group with axis $\ell$. This group fixes $\ell$
pointwise, acts regularly on $\mathbb P^2(q)\setminus\ell$, and
has order $q^2$. For $i=1,2$, put
$K_i=T(\ell)_{B_i}$. Then $[T(\ell):K_i]\leq2$.

If $q$ is odd, then $T(\ell)$ has odd order and cannot interchange
$B_1$ and $B_2$. Thus $K_i=T(\ell)$. If $B_i$ contains a point
outside $\ell$, then $k\geq q^2+5$. Since $v<2k$, this
contradicts $k\mid v$ and $k<v$.

Suppose that $q$ is even and $q\geq4$. Then
$|K_i|\geq q^2/2$. If $B_i$ contains a point outside $\ell$,
then $k\geq q^2/2+5$ and hence $v<3k$. Since both $v$ and $k$
are odd, $v/k$ is an odd integer strictly between $1$ and $3$,
which is impossible. For $q=2$, we have $v=7$, and there is no
divisor $k$ satisfying $5<k<v$. Therefore, every block containing
five points of $\ell$ is contained in $\ell$.

Now choose four distinct points
$x_1,x_2,x_3,x_4\in\ell$ and a point $x_5\notin\ell$. Let
$B$ be one of the two blocks containing these five points. The
block $B$ contains no further point of $\ell$, since otherwise
it would contain five collinear points and hence would be contained
in $\ell$.

Let $U$ be the group of homologies with axis $\ell$ and center
$x_5$ contained in $\P\S\L(3,q)$. Then
$|U|=(q-1)/(3,q-1)$. Put $K=U_B$. Since $[U:K]\leq2$ and
$k>5$, the block $B$ contains a point outside
$\ell\cup\{x_5\}$. The $K$-orbit of this point lies on a line
through $x_5$ and has length $|K|$. If $|K|\geq4$, then this
orbit together with $x_5$ gives five collinear points of $B$,
forcing $B$ to be contained in that line. This contradicts the
fact that $B$ also contains four points of $\ell$. Hence
$|K|\leq3$, and therefore $|U|\leq6$.

It follows that $q\in\{3,4,5,7,13,16,19\}$. Together with
$q\equiv1\pmod3$, this gives
$q\in\{4,7,13,16,19\}$. Using also $k\mid q^2+q+1$,
$5<k<q^2+q+1$, and $3\mid k$, the only remaining possibilities
are $(q,k)=(16,21)$ and $(16,39)$. In both cases $v=273$.
If $k=21$, then
$\lambda_4=2(v-4)/(k-4)=538/17\notin\mathbb Z$, while if
$k=39$, then $\lambda_4=538/35\notin\mathbb Z$. This
contradicts Lemma~\ref{lem2.1}(iv). Therefore, the case $d=3$
is impossible.

\vspace{2mm}

\textbf{Case 2}: $d>3$.

\vspace{2mm}

Let $H$ be a hyperplane of $\mathbb P^{d-1}(q)$, and choose five
distinct points of $H$. Let $B$ be one of the two blocks
containing them. Let $T(H)$ be the translation group with axis
$H$. Then $T(H)$ fixes $H$ pointwise, acts regularly on
$\mathbb P^{d-1}(q)\setminus H$, and has order $q^{d-1}$.
Writing $K=T(H)_B$, we have $[T(H):K]\leq2$.

Suppose that $B$ contains a point outside $H$. If $q$ is odd,
then $K=T(H)$ and hence $k\geq q^{d-1}+5$. Since
$v=1+q+\cdots+q^{d-1}<2q^{d-1}<2k$, this contradicts
$k\mid v$ and $k<v$.

Suppose that $q$ is even and $q\geq4$. Then
$|K|\geq q^{d-1}/2$, so $k\geq q^{d-1}/2+5$. Moreover,
$v<\frac{q}{q-1}q^{d-1}\leq\frac43q^{d-1}<3k$. Since both
$v$ and $k$ are odd, $v/k$ is an odd integer strictly between
$1$ and $3$, which is impossible.

Finally, let $q=2$. Then $v=2^d-1$ and
$k\geq2^{d-2}+5$. For $d=4,5$, the divisibility
$k\mid2^d-1$ gives no value satisfying $5<k<v$ and this lower
bound. For $d\geq6$, we have $1<v/k<4$. Since $v/k$ is odd,
it follows that $v=3k$. The condition $k-4\mid2(v-4)$ then gives
$k-4\mid2(3k-4)$, and hence $k-4\mid16$. This contradicts
$k-4\geq2^{d-2}+1\geq17$.

Therefore, every block containing five points of $H$ is contained
in $H$. Since $H$ was arbitrary, let $S$ be any $5$-subset and
let $B$ be a block containing $S$. If $\langle S\rangle$ is a
proper subspace, then it is the intersection of all hyperplanes
containing $S$, and hence $B\subseteq\langle S\rangle$. Since
$S\subseteq B$, it follows that
$\langle B\rangle=\langle S\rangle$. If
$\langle S\rangle$ is the whole point space, the same equality is
immediate from $S\subseteq B$. Thus
\(
\langle B\rangle=\langle S\rangle
\)
for every block $B$ containing $S$.

Since $G$ is block-transitive and semilinear transformations
preserve projective dimension, all blocks have the same projective
span dimension. As every $5$-subset is contained in a block, it
follows that all $5$-subsets of the point set must span subspaces
of the same dimension.

This is impossible. If $q\geq4$, five collinear points span a
line, whereas four collinear points together with one point outside
the line span a plane. If $q=2$ or $3$, five suitable points in
a plane span that plane, whereas another five-point set can be
chosen to span a projective $3$-space. Thus $d>3$ cannot occur.

Consequently, the case $N=\P\S\L(d,q)$ with $d\geq3$ is excluded.
\end{proof}

By Lemmas~3.7--3.11, the only possible parameter--group triples
in Case~{\rm (B2)} are
\begin{equation*}
(v,k,G)=(12,6,\P\G\L(2,11))
\quad\text{and}\quad
(v,k,G)=(24,8,\P\G\L(2,23)).
\end{equation*}
Both possibilities occur by Lemma~3.8.

The uniqueness of the first design in Lemma~3.8 follows from
\cite{WeiLi}. We now establish the corresponding
uniqueness statement for the second design.

\begin{lemma}\label{lem3.12}
Let $X=\mathbb{P}^{1}(23)=\GF(23)\cup\{\infty\}$, and $G=\P\G\L(2,23)$ acting naturally on $X$.
There is exactly one $G$-orbit $\mathcal{O}$ on the $8$-subsets of $X$ for which $(X,\mathcal{O})$ is a $5$-$(24,8,2)$ design.
Consequently, there is, up to isomorphism, a unique simple block-transitive $5$-$(24,8,2)$ design whose block set is a single $G$-orbit.
\end{lemma}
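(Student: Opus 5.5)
Since $(X,\mathcal O)$ must be a $5$-$(24,8,2)$ design with $b=2\binom{24}{5}/\binom{8}{5}=1518$ blocks, any admissible orbit $\mathcal O=B^G$ satisfies $|G_B|=|G|/1518=12144/1518=8$. The plan is therefore to enumerate the $G$-orbits on $8$-subsets whose setwise stabilizer in $G=\P\G\L(2,23)$ has order exactly $8$, and then test each one for the $5$-design property.

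The first step is to list the subgroups $H\leq G$ of order $8$ up to conjugacy. Since $|G|=2^4\cdot3\cdot11\cdot23$, these are subgroups of a Sylow $2$-subgroup, which is dihedral of order $16$. This leaves three classes: the cyclic group $C_8$ (contained in the nonsplit torus of order $24$), and two classes of $D_8$. One class of $D_8$ lies in $\P\S\L(2,23)$, in line with the arithmetic reduction in Lemma~\ref{lem3.8}, where $\P\S\L(2,23)_B\cong D_8$. The other class of $D_8$ is not contained in $\P\S\L(2,23)$. The proof of Lemma~\ref{lem3.8} shows more: the block stabilizer must satisfy $G_B=\P\S\L(2,23)_B\cong D_8$, because $b=[G:G_B]$ and the design condition forces $[G:\P\S\L(2,23)G_B]=2$. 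It therefore suffices to take $H$ to be a representative of the unique class of $D_8\leq\P\S\L(2,23)$. I would confirm that this class is unique by Dickson's theorem, since $8\mid(q+1)/2=12$ gives $D_8\le D_{24}$, and all such subgroups are conjugate.

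The second step is to compute the orbits of $H$ on $X$. By Lemma~\ref{lem2.6}(ii)(a) with $c=4$, $q=23$, the group $H$ has three regular orbits of length $8$, and no other orbits. Any $8$-set $B$ with $H\le G_B$ is a union of $H$-orbits, so $B$ is one of these three regular orbits. I would then discard those $B$ whose full stabilizer $G_B$ is strictly larger than $H$; such a $B$ would give an orbit of length less than $1518$. I would also identify candidates that lie in the same $G$-orbit, which happens via $N_G(H)$. Since $N_G(H)$ is $D_{16}$ and permutes the three $H$-orbits, this leaves at most two (and possibly only one) $G$-orbits of candidates.

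The third step is to check, for each candidate orbit $B^G$, whether every $5$-subset lies in exactly two blocks. Equivalently, I would compute the count $|\{g\in G: S\subseteq B^g\}|/|G_B|$ for representatives $S$ of the $G$-orbits on $5$-subsets, which are few because $G$ is sharply $3$-transitive. For $B_+=\{\infty,0,1,3,12,15,21,22\}$ from Lemma~\ref{lem3.8}, this computation succeeds. I expect the main obstacle to be showing that every other candidate fails. The cleanest way is to exhibit, for each such candidate, a single $4$-subset with containment number different from $\lambda_4=10$, or a $5$-subset with containment number different from $2$, in the same way as the table for $(23,8)$ in Lemma~3.7. This is a finite Magma check.

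For the final sentence of the statement, isomorphism classes are to be taken up to conjugation in $\mathrm{Sym}(X)$ normalizing $G$. Since $N_{\mathrm{Sym}(X)}(G)=G$, distinct $G$-orbits could still give isomorphic designs, but uniqueness of the orbit already yields uniqueness of the design. I would also point out that this design is the union of two Witt designs \cite{Betten}.
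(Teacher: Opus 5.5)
Your argument is correct, but it narrows the candidates by a different route from the paper.

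\textbf{The paper's route.} The paper's proof is self-contained. It runs through all three classes of subgroups of order $8$ in $\P\G\L(2,23)$. It disposes of $C_8$ because each of its three point-orbits is stabilized by an overgroup $D_{16}$. It then uses the overgroup data and the count $759\cdot 2=1518$ to show that each of the two $D_8$-classes, $H_1\le\P\S\L(2,23)$ and $H_2\not\le\P\S\L(2,23)$, yields exactly one $G$-orbit $\mathcal O_\pm$ of length $1518$. Finally it tests both orbits using the cross-ratio profiles $\rho(S)$ on the six $G$-orbits of $5$-subsets. This shows that $\mathcal O_+$ is a design, while $\mathcal O_-$ misses $\Delta_1$ and $\Delta_3$ and covers $\Delta_4$ four times.

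\textbf{Your route.} You instead import Lemma~3.8. Since $[G:\P\S\L(2,23)]=2$ and Lemma~3.7 rules out $\P\S\L(2,23)$ being block-transitive, you get $G_B\le\P\S\L(2,23)$. Hence $G_B$ is a Sylow $2$-subgroup of $\P\S\L(2,23)$. This eliminates both $C_8$ and the class $H_2$ at once. The normalizer $N_G(H)\cong D_{16}$ then collapses the three regular $H$-orbits to a single candidate orbit, which must be $B_+^G$.

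\textbf{Comparison.} Your route is shorter and needs only one positive verification. The cost is that Lemma~3.12 now depends on the Magma exclusion in Lemma~3.7. In effect, $\mathcal O_-$ is excluded there, as one of the $\P\S\L(2,23)$-orbits whose stabilizer has order $4$. The paper's proof stands alone, and it explicitly identifies the second orbit of length $1518$ and why it fails.

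\textbf{Minor corrections.}
\begin{enumerate}
\item Uniqueness of the class of $H$ follows directly from Sylow's theorem, since $|\P\S\L(2,23)|=2^3\cdot 759$. Your Dickson condition should read $4\mid (q+1)/2=12$, giving $D_{2\cdot 4}\le D_{24}$, not $8\mid 12$.
\item Your count can be sharpened. An $H$-orbit fixed by $N_G(H)\cong D_{16}$ has stabilizer containing $D_{16}$, so it is discarded. The survivors therefore come in swapped pairs, and there is at most one candidate $G$-orbit, not two.
\item The remark about $N_{\mathrm{Sym}(X)}(G)$ is unnecessary. With $X$ and $G$ fixed, a unique admissible orbit already gives a literally unique design.
\end{enumerate}
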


\begin{proof}
Let $\mathcal{D}=(X,\mathcal{B})$ be such a design. Since $G$ is
block-transitive, for any $B\in\mathcal{B}$ we have
\begin{equation*}
b=2\frac{\binom{24}{5}}{\binom{8}{5}}=1518,\qquad
|G|=23(23^2-1)=12144,\qquad |G_B|=\frac{|G|}{b}=8.
\end{equation*}
We therefore first determine the $G$-orbits of length $1518$ on
the $8$-subsets of $X$.

The subgroup and overgroup structure of $\P\G\L(2,23)$ follows
from \cite{CameronPGL}, while the
relevant point-orbit lengths follow from
\cite{CameronPGL}. Every subgroup of order $8$
is either cyclic or dihedral. A cyclic subgroup $C_8$ acts
semiregularly on $X$ and hence has three point-orbits of length
$8$. The overgroup data show that each such orbit is stabilized
by a dihedral overgroup of order $16$. Thus no $8$-subset of $X$
has full stabilizer $C_8$.

There are two $G$-conjugacy classes of dihedral subgroups of order
$8$, represented by $H_1$ and $H_2$, where
\begin{equation*}
H_1\leq\P\S\L(2,23)
\quad\text{and}\quad
H_2\not\leq\P\S\L(2,23).
\end{equation*}
Their point-orbit lengths on $X$ are, respectively,
$8, 8, 8$ and $4, 4, 8, 8$.

Consequently, each $H_i$ stabilizes exactly three $8$-subsets of
$X$. Each $H_i$ lies in a unique dihedral subgroup of order $16$,
whose point-orbit lengths are $8$ and $16$. In the case of $H_1$,
there are also overgroups isomorphic to $S_4$, but
\cite[Lemma~10]{CameronPGL} shows that these groups are transitive
on $X$ when $q=23$ and hence stabilize no $8$-subset. It follows
that, for each $i$, exactly one of the three $H_i$-invariant
$8$-subsets has stabilizer properly containing $H_i$. The other
two have full stabilizer $H_i$.

By \cite{CameronPGL}, each of the two conjugacy classes
contains $759$ subgroups. Hence, for each $i$, the number of
$8$-subsets whose full stabilizer is conjugate to $H_i$ is
$759\cdot2=1518$. Since an orbit with stabilizer of order $8$ has
length $12144/8=1518$, the orbit-counting formula in
\cite{CameronPGL} shows that there is exactly one such
$G$-orbit for each of the two conjugacy classes.

Representatives of these two orbits may be chosen as
\begin{equation*}
\begin{split}
B_+&=\{\infty,0,1,3,12,15,21,22\},\\
B_-&=\{\infty,0,1,2,3,6,10,19\},
\end{split}
\end{equation*}
where $G_{B_+}$ is conjugate to $H_1$ and $G_{B_-}$ is conjugate
to $H_2$. Put $\mathcal{O}_+=B_+^G$ and
$\mathcal{O}_-=B_-^G$.

It remains to test the $5$-design condition. Since $G$ is sharply
$3$-transitive on $X$, its orbits on the $4$-subsets of $X$ are
described by the anharmonic action on cross-ratios. The four
anharmonic orbits on $GF(23)\setminus\{0,1\}$ are
\begin{equation*}
\begin{aligned}
A_1&=\{4,6,9,15,18,20\},&
A_2&=\{3,8,11,13,16,21\},\\
A_3&=\{5,7,10,14,17,19\},&
A_4&=\{2,12,22\}.
\end{aligned}
\end{equation*}
Let $\Omega_j$ be the corresponding $G$-orbit on the $4$-subsets
of $X$. For a $5$-subset $S$, define
\begin{equation*}
\rho(S)=\bigl(\rho_1(S),\rho_2(S),\rho_3(S),\rho_4(S)\bigr),
\qquad
\rho_j(S)=\left|\binom{S}{4}\cap\Omega_j\right|.
\end{equation*}
The vector $\rho(S)$ is invariant under $G$.
For each representative $S_i$ below, we classify its five
$4$-subsets by their cross-ratios in $A_1,\ldots,A_4$ to obtain
$\rho(S_i)$. We compute $G_{S_i}$ by testing which elements of
$G$ stabilize $S_i$ setwise, and obtain
$|S_i^G|=|G|/|G_{S_i}|$.
The resulting data, verified using Magma, are listed below.
\begin{center}
\begin{tabular}{c|c|c|c|c}
\hline
$i$ & $S_i$ & $\rho(S_i)$ & $|G_{S_i}|$ & $|S_i^G|$\\
\hline
$1$ & $\{\infty,0,1,3,4\}$ & $(3,2,0,0)$ & $2$ & $6072$\\
$2$ & $\{\infty,0,1,2,5\}$ & $(2,0,2,1)$ & $2$ & $6072$\\
$3$ & $\{\infty,0,1,3,14\}$ & $(2,1,2,0)$ & $2$ & $6072$\\
$4$ & $\{\infty,0,1,2,6\}$ & $(1,1,2,1)$ & $1$ & $12144$\\
$5$ & $\{\infty,0,1,2,3\}$ & $(1,2,0,2)$ & $2$ & $6072$\\
$6$ & $\{\infty,0,1,3,7\}$ & $(0,3,2,0)$ & $2$ & $6072$\\
\hline
\end{tabular}
\end{center}
Indeed, the six profiles are pairwise distinct, and the displayed
orbit lengths sum to
$5\cdot6072+12144=42504=\binom{24}{5}$.
Thus the table accounts for all $G$-orbits on the $5$-subsets of
$X$. Write $\Delta_i=S_i^G$ and set
\begin{equation*}
m_i^\pm=
\left|\binom{B_\pm}{5}\cap\Delta_i\right|.
\end{equation*}
For $S\in\Delta_i$, let $\lambda_i^\pm$ be the number of members
of $\mathcal{O}_\pm$ containing $S$. Double-counting the incident
pairs between $\Delta_i$ and $\mathcal{O}_\pm$ gives
\begin{equation*}
\lambda_i^\pm
=\frac{m_i^\pm|G_{S_i}|}{|G_{B_\pm}|}
=\frac{m_i^\pm|G_{S_i}|}{8}.
\end{equation*}
For each of the $\binom{8}{5}=56$ five-subsets $T$ of $B_\pm$,
we compute $\rho(T)$ and compare it with the six distinct
profiles in the preceding table. Since those profiles distinguish
all six $G$-orbits, $T\in\Delta_i$ if and only if
$\rho(T)=\rho(S_i)$. Counting these occurrences gives
$m_i^\pm$, and the preceding double-counting formula then gives
$\lambda_i^\pm$. The results are listed below.
\begin{center}
\begin{tabular}{c|c|c|c|c|c|c}
\hline
$i$ & $1$ & $2$ & $3$ & $4$ & $5$ & $6$ \\
\hline
$m_i^+$ & $8$ & $8$ & $8$ & $16$ & $8$ & $8$ \\
$m_i^-$ & $0$ & $8$ & $0$ & $32$ & $8$ & $8$ \\
\hline
$\lambda_i^+$ & $2$ & $2$ & $2$ & $2$ & $2$ & $2$ \\
$\lambda_i^-$ & $0$ & $2$ & $0$ & $4$ & $2$ & $2$ \\
\hline
\end{tabular}
\end{center}
Therefore every $5$-subset of $X$ is contained in exactly two
members of $\mathcal{O}_+$, so $(X,\mathcal{O}_+)$ is a simple
$5$-$(24,8,2)$ design. In contrast, $\mathcal{O}_-$ does not
satisfy the $5$-design condition: the members of $\Delta_1$ and
$\Delta_3$ are contained in no block, whereas the members of
$\Delta_4$ are contained in four blocks.

Since $\mathcal{O}_+$ and $\mathcal{O}_-$ are the only
$G$-orbits of the required length, $\mathcal{O}_+$ is the unique
possible block orbit. This proves the lemma.
\end{proof}

\begin{lemma}\label{lem:almost-B3}
Case {\rm(B3)} is impossible.
\end{lemma}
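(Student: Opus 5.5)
Case (B3) has $N=\P\S\U(3,q)$, $v=q^3+1$, $q>2$, acting on the points of the Hermitian unital, and $G\leq\P\Gamma\U(3,q)$. The plan is to exploit the large unipotent subgroups fixing a block of the unital (a chord, of size $q+1$) pointwise, in the same way the translation groups were used in Lemma~\ref{lem3.11}.

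\textbf{Step 1: blocks containing five points of a chord lie in that chord.} Take a chord $\ell$ (the intersection of the unital with a secant line), which has $q+1$ points. Suppose $q+1\geq5$, i.e.\ $q\geq4$; the case $q=3$, $v=28$, is handled separately below. Choose five points of $\ell$.
\begin{itemize}
\item The pointwise stabilizer of $\ell$ in $\S\U(3,q)$ contains a cyclic group $C$ of order $q+1$, acting as homologies with axis the polar line of $\ell$. More useful is the subgroup of $N$ fixing $\ell$ pointwise, which is $\S\U(2,q)$-related. I would instead take the stabilizer of two points $x,y\in\ell$ together with three further points.
\item Cleaner choice: fix a point $\infty$ and take the unipotent radical $Q$ of $N_\infty$, of order $q^3$. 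Its centre $Z$, of order $q$, fixes pointwise every chord through $\infty$ and acts semiregularly (orbits of length $q$) on every other point.
\end{itemize}
Now choose five points $S$ on a chord $\ell$ through $\infty$. By Lemma~\ref{lem2.17}, $[Z:Z_B]\leq 2$ for each of the two blocks $B$ containing $S$, so each $Z_B$-orbit off $\ell$ has length at least $q/2$. In fact $Z$ fixes every chord through $\infty$ pointwise, so it fixes $q^2$ of the remaining points; this version of the argument is therefore too weak.

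\textbf{Step 1, revised.} Use $Q$ itself, with the pointwise stabilizer of $\ell$ inside $N_\infty$. This subgroup is $Z\rtimes C_{(q+1)/(3,q+1)}$-like, and the orbit structure is messy. I would therefore switch to an arithmetic route, treating Step 1 as the geometric fallback.

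\textbf{Step 2: arithmetic.} Since $k\mid q^3+1=(q+1)(q^2-q+1)$, we have $k\mid v$, and Corollary~\ref{cor2.3} gives $k<\sqrt{2}\,q^{3/2}+4$. Block-transitivity gives
\[
b=\frac{2\binom{q^3+1}{5}}{\binom{k}{5}}\mid |G| = q^3(q^3+1)(q^2-1)\,\frac{2e}{(3,q+1)}\cdot(3,q+1),
\]
where $q=p^e$. Cancel using $v=q^3+1$ and $v-1=q^3$. What remains is that
\[
2(q^3-1)(q^3-2)(q^3-3)\ \Big/\ \bigl[k(k-1)(k-2)(k-3)(k-4)\bigr]
\]
must divide $(q^2-1)\cdot 2e$, up to the factor $|N_B|$. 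Since $k<\sqrt{2}\,q^{3/2}+4$, the left side exceeds roughly $q^{9}/q^{7.5}$, so $b/[G:N]$ grows like $q^{7.5}$ while $|G|$ grows like $q^8\cdot 2e$. This alone does not contradict, so I need sharper information.

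\textbf{Step 3: the $\lambda_4$ and $\lambda_3$ conditions with $k\mid v$.} From $k-4\mid 2(v-4)=2(q^3-3)$ and $k\mid q^3+1$, we get $k-4\mid 2(k-4)\cdot\frac{v}{k}-2(q^3-3)+\dots$. Concretely, write $v=mk$; then $2(v-4)=2m(k-4)+8m-8$, so
\[
k-4\mid 8(m-1).
\]
Since $m=v/k>v/(\sqrt{2v}+4)$, this gives $k\leq 8m+4$ only, which is again weak. Combine it with $\lambda_3$: $(k-3)(k-4)\mid 2(v-3)(v-4)$, so $k-3\mid 2\cdot 3\cdot 2(m\ldots)$. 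Similarly $k-3\mid 6(m-1)\cdot(\ldots)$. These bound $k$ by small multiples of $m$.

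Together with $k^2\approx 2v/\lambda_3$-type relations, i.e.\ $(k-3)(k-4)\leq 2(v-4)$ and $k(k-1)(k-2)(k-3)(k-4)\cdot|G|\geq 2v(v-1)\cdots(v-4)$, this should force $k$ to lie near $\sqrt{2v}$ and $m$ near $\sqrt{v/2}$. Hence the equation $(k-4)t=8(m-1)$ forces $t$ small, and the resulting polynomial identities in $q$ have only finitely many solutions. Those I would check directly, together with $q=3$, $v=28$: there the divisors of 28 are $k\in\{7,14\}$, and $\lambda_4=48/3=16$, $48/10$. For $k=7$, $\lambda_3=2\cdot25\cdot24/12=100$ and $\lambda_2=2\cdot26\cdot25\cdot24/60=520$, so $b=2\binom{28}{5}/21=9360$. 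This does not divide $|\P\Gamma\U(3,3)|=12096$, which excludes it.

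\textbf{Main obstacle.} The general-$q$ step is the hard one. I expect the cleanest route to be the Lemma~\ref{lem2.18}-style divisibility: $\gcd(k,v-i)=\gcd(k,i)$, so $k$ divides $24\,[G_B:G_{\alpha\beta B}]$-type quantities. I would try first to get $k\mid 24\cdot(\text{small})$ by using $G_B$-orbit lengths via Lemma~\ref{lem2.14}. Failing that, I would fall back on a computer-assisted finite check after bounding $q$.
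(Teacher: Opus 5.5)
There is a genuine gap: nothing in your proposal settles the case of general $q$, as your last paragraph concedes. Step~1 is abandoned, and its starting point is also wrong. The centre $Z$ of the unipotent radical of $N_\infty$ fixes only $\infty$ and acts regularly on the $q$ remaining points of each chord through $\infty$, so it fixes no $5$-subset pointwise. The pointwise stabilizer of a chord has order $O(q)$, and its orbits are too short for Lemma~\ref{lem2.17} to confine the blocks to the chord. Step~2 rightly observes that comparing the sizes of $b$ and $|G|$ is inconclusive. Step~3 rests on unproved assertions. From $b\le|G|$ you only get roughly $k\gtrsim q^{7/5}e^{-1/5}$, against $k<\sqrt2\,q^{3/2}+4$, so $k$ is not pinned within a constant factor of $\sqrt{2v}$. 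Nothing you state prevents $t=8(m-1)/(k-4)$ from being of order $q^{1/5}$, and no finite list of polynomial identities is ever produced. Your computation for $q=3$ is correct, but it covers a single value.

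The missing idea is to use coprimality before size. With $n=(3,q+1)$ and $a=[G:N]\mid 2ne$, the equality $b=[G:G_B]$ becomes
\[
2n(q^2+q+1)(q^3-2)(q^3-3)\,|G_B|=(q+1)\,a\,k(k-1)(k-2)(k-3)(k-4).
\]
Since $k\mid q^3+1$, one checks that $\gcd(q^2+q+1,(q+1)k)=1$, $\gcd(q^3-2,(q+1)k)\mid 3$ and $\gcd(q^3-3,(q+1)k)\mid 16$; this is where your remark $\gcd(k,v-i)=\gcd(k,i)$ is actually useful. Hence neither $k$ nor $q+1$ can absorb the degree-$8$ factor on the left. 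The unknown $|G_B|$ only multiplies the left-hand side, so it does not affect this divisibility, and you get $(q^2+q+1)(q^3-2)(q^3-3)\mid 96ne(k-1)(k-2)(k-3)(k-4)$. Only now does a size comparison work. The left side has order $q^8$, while by Corollary~\ref{cor2.3} the right side is less than $288e\bigl(\sqrt{2(q^3-3)}+4\bigr)^4$, which has order $1152\,e\,q^6$. This forces $q<87$. The remaining prime powers are settled by a finite check using $\gcd(k,6)>1$ (Lemma~\ref{lem2.18}), $k\mid q^3+1$, integrality of the $\lambda_s$, and integrality of $|G_B|$ in the displayed equation.
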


\begin{proof}
In this case,
$|G|=(q^3+1)q^3(q^2-1)a/n$, where $n=(3,q+1)$,$a=|G:N|$ and $a\mid2ne$.
Since $G$ is block-transitive, we have
$b=|G:G_B|=2\binom{v}{5}/\binom{k}{5}$.
After cancelling the common factors, we obtain
\begin{equation}\tag{3.14}\label{eq3.14}
2(q^2+q+1)(q^3-2)(q^3-3)|G_B|
=
\frac{(q+1)a}{n}
k(k-1)(k-2)(k-3)(k-4).
\end{equation}

Since $k\mid q^3+1$, we have
$\gcd(q^2+q+1,(q+1)k)=1$,
$\gcd(q^3-2,(q+1)k)\mid3$ and
$\gcd(q^3-3,(q+1)k)\mid16$.
Indeed, $\gcd(q^2+q+1,q^3+1)\mid2$, while $q^2+q+1$ is odd; moreover, any common divisor of $q^3-2$ and $q^3+1$ divides $3$, and the final bound follows from
$\gcd(q^3-3,q+1)\mid4$ and $\gcd(q^3-3,k)\mid4$.
Thus equation~\eqref{eq3.14}, together with $a\mid2ne$, yields
\begin{equation*}
(q^2+q+1)(q^3-2)(q^3-3)
\mid
96ne(k-1)(k-2)(k-3)(k-4).
\end{equation*}
By Corollary~\ref{cor2.3},
$k<\sqrt{2(q^3-3)}+4$.
Consequently,
\begin{equation*}
(q^2+q+1)(q^3-2)(q^3-3)
<
288e\bigl(\sqrt{2(q^3-3)}+4\bigr)^4.
\end{equation*}
Since $e\leq\log_2q$, this inequality is impossible for $q\geq87$.
It remains only to consider the prime powers $q<87$.

By Lemma~\ref{lem2.18}, we have $\gcd(k,6)>1$. Using this together with $k\mid q^3+1$, $5<k<q^3+1$, $k-4\mid2(q^3-3)$ and $2\binom{q^3+1}{5}/\binom{k}{5}\in\mathbb Z$, we obtain the following remaining possibilities.

\begin{center}
\begin{tabular}{c|c@{\qquad}c|c@{\qquad}c|c}
\hline
$q$ & $k$ & $q$ & $k$ & $q$ & $k$\\
\hline
$5$  & $6$       & $17$ & $6,9$     & $41$ & $6$\\
$7$  & $8$       & $23$ & $6,8,12$  & $47$ & $6,8,9$\\
$9$  & $10$      & $29$ & $6$       & $53$ & $6$\\
$11$ & $6,12$    & $31$ & $8$       & $59$ & $6$\\
$32$ & $9$       & $71$ & $6,8$     & $83$ & $6$\\
$79$ & $8$       &      &           &      &    \\
\hline
\end{tabular}
\end{center}

For each pair $(q,k)$ in the table and each divisor $a$ of $2ne$, equation~\eqref{eq3.14} gives
\begin{equation*}
|G_B|
=
\frac{(q+1)a\,k(k-1)(k-2)(k-3)(k-4)}
{2n(q^2+q+1)(q^3-2)(q^3-3)},
\end{equation*}
which is never an integer.
Hence none of the remaining parameter pairs can occur.
Therefore, the case $N=\P\S\U(3,q)$ is impossible.
\end{proof}

\begin{lemma}\label{lem:almost-B4}
Case {\rm(B4)} is impossible.
\end{lemma}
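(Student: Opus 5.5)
We will follow the same strategy as in Lemma~\ref{lem:almost-B3}. In Case~(B4) we have $N=\Sz(q)$ with $q=2^{2e+1}>2$, $v=q^2+1$, and $N\leq G\leq\Aut(N)=N\rtimes C_{2e+1}$. Hence $|G|=(q^2+1)q^2(q-1)a$ with $a\mid 2e+1$. Since $v=q^2+1$ is odd and $k\mid v$, the block size $k$ is odd. Lemma~\ref{lem2.18} then forces $3\mid k$, so $3\mid q^2+1$. But $q^2=2^{4e+2}\equiv1\pmod 3$, so $q^2+1\equiv2\pmod3$, and this is a contradiction.

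This congruence argument should already settle the case in one line, and it is the argument we will use. As a safeguard we will also record the arithmetic route. Equate $b=[G:G_B]$ with $2\binom{v}{5}/\binom{k}{5}$ and cancel, which gives
\[
2(q^2-1)(q^2-2)(q^2-3)|G_B|=(q-1)a\,k(k-1)(k-2)(k-3)(k-4).
\]
Combining $\gcd(k,q^2-i)\mid\gcd(k,i+1)$ with Corollary~\ref{cor2.3} then bounds $q$. We do not expect to need this route.

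The only point to check carefully is that Lemma~\ref{lem2.18} applies. Its hypotheses are exactly a non-trivial $5$-$(v,k,2)$ design with $k\mid v$ and a block-transitive group $G$, and these hold here, so the conclusion $2\mid k$ or $3\mid k$ is available. Since $k$ is odd, we must have $3\mid k$. The rest of the argument is the elementary computation $2^{4e+2}+1\equiv 2\pmod 3$.

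So the proof will read as follows. Since $v$ is odd, $k$ is odd; by Lemma~\ref{lem2.18}, $3\mid k\mid q^2+1$; this contradicts $q^2+1\equiv2\pmod3$. Therefore Case~(B4) is impossible. I expect no real obstacle.
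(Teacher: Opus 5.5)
Your proof is correct and is exactly the paper's argument: $v=q^2+1$ is odd and $q^2+1\equiv 2\pmod 3$, so $k\mid v$ rules out both $2\mid k$ and $3\mid k$, contradicting Lemma~\ref{lem2.18}. The backup arithmetic route is not needed, though your displayed equation for it is correctly derived.
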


\begin{proof}
In this case, $v=q^2+1$, where $q=2^{2e+1}$.
Thus $v$ is odd and $v\equiv2\pmod3$.
Since $k\mid v$, we have $2\nmid k$ and $3\nmid k$,
contrary to Lemma~\ref{lem2.18}.
Therefore Case~{\rm(B4)} is impossible.
\end{proof}

\begin{lemma}\label{lem:almost-B5}
Case {\rm(B5)} is impossible.
\end{lemma}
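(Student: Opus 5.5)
Case (B5) has $N=\Re(q)$ and $v=q^3+1$ with $q=3^{2e+1}>3$, and I would argue exactly as in Case (B4), trying a congruence obstruction against Lemma~\ref{lem2.18} first. Since $q$ is odd, $v=q^3+1$ is even, so $k$ may be even and the parity half of the argument gives nothing. The divisibility by $3$ does help: $q^3\equiv0\pmod3$ gives $v\equiv1\pmod3$, hence $3\nmid k$. By Lemma~\ref{lem2.18} we then get $2\mid k$. Moreover $q\equiv3\pmod 8$ (as $3^{odd}\equiv3\pmod8$), so $q^3\equiv3\pmod8$ and $v\equiv4\pmod8$. Thus $k=2k'$ or $k=4k''$ with $k',k''$ odd, $3\nmid k'$, $3\nmid k''$, and $k\mid q^3+1=(q+1)(q^2-q+1)$.

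The parity case will not close immediately, so I would continue with the standard divisibility argument used for Case (B3). Here $|G|=(q^3+1)q^3(q-1)a$ with $a\mid 2e+1$. The equation $b=[G:G_B]$ becomes
\[
2(q^3-1)(q^3-2)(q^3-3)(q^3-4)|G_B|=k(k-1)(k-2)(k-3)(k-4)(q-1)a .
\]
Next I would strip the common factors, using $k\mid q^3+1$. The gcds of $k$ with $q^3-i$ divide $i+1$. The factor $q^2+q+1$ of $q^3-1$ is coprime to $(q-1)k$ up to small constants. Both $q^3-2$ and $q^3-4$ are odd and essentially coprime to $(q-1)k$. This yields a divisibility of the form
\[
(q^2+q+1)(q^3-2)(q^3-4)\mid C\,a\,(k-1)(k-2)(k-3)(k-4)
\]
for a small absolute constant $C$, obtained by tracking the $2$- and $5$-parts from $\gcd(q^3-4,k)\mid5$ and $\gcd(q^3-3,k)\mid4$.

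Combining this with Corollary~\ref{cor2.3}, namely $k<\sqrt{2(q^3-3)}+4$, the left side grows like $q^8$ while the right side grows like $a\,q^6\le(2e+1)q^6$. This bounds $q$ to a tiny range, probably only $q=27$ and perhaps $q=243$. For those I would list the divisors $k$ of $q^3+1$ that are even and prime to $3$, and test the integrality of $\lambda_4=2(v-4)/(k-4)$, $\lambda_3$ and $\lambda_2$ as in the earlier small cases. If anything survives, I would check whether $|G_B|$ from the displayed equation is an integer.

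I expect the main obstacle to be the bookkeeping of the gcds needed to get a clean constant $C$. The finite check at $q=27$ ($v=19684$) should be routine by hand, since $k-4\mid 2\cdot19680$ with $k\mid19684=4\cdot7\cdot19\cdot37$ leaves very few candidates.
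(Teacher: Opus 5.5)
Your strategy is the paper's: compare $b$ with $|G|$, use $k\mid q^3+1$ to show the remaining left-hand factors are almost coprime to $k$, then play a degree-$8$ left side against the degree-$6$ bound from Corollary~\ref{cor2.3}. However, your central displayed equation is wrong, and the divisibility you build on it is unjustified. With $v=q^3+1$,
\[
v(v-1)(v-2)(v-3)(v-4)=(q^3+1)q^3(q^3-1)(q^3-2)(q^3-3),
\]
so there is no factor $q^3-4$; that factor would be $v-5$. Cancelling $(q^3+1)q^3(q-1)$ against $|G|$ gives
\[
2(q^2+q+1)(q^3-2)(q^3-3)\,|G_B|=a\,k(k-1)(k-2)(k-3)(k-4),
\]
which is equation (3.15) of the paper. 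Consequently your divisibility $(q^2+q+1)(q^3-2)(q^3-4)\mid C\,a(k-1)(k-2)(k-3)(k-4)$ does not follow. Neither does the $5$-part bookkeeping via $\gcd(q^3-4,k)\mid 5$, and your displayed equation cannot serve as the final integrality test for $|G_B|$.

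The repair uses only facts you already have. First, $\gcd(q^2+q+1,k)=1$. Second, $\gcd(q^3-2,k)=1$: it divides $3$, and you showed $3\nmid k$. Third, $\gcd(q^3-3,k)\mid 4$. Hence $(q^2+q+1)(q^3-2)(q^3-3)\mid 4a(k-1)(k-2)(k-3)(k-4)$; the paper settles for the constant $12$. Now use $a\le 2e+1=\log_3 q$ and, for $q\ge 27$, $k<\sqrt{2(q^3-3)}+4<\frac{3}{2}q^{3/2}$. This yields $\frac{676}{729}q^2<\frac{81}{4}\log_3 q$, which already fails at $q=27$ and hence for every admissible $q$. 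So, contrary to your expectation, no value of $q$ survives, and the finite check at $q=27$ or $q=243$ is unnecessary. Your mod-$8$ refinement ($3\nmid k$, so $2\mid k$, and $v\equiv 4\pmod 8$) is correct but plays no role in the argument.
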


\begin{proof}
In this case, $|G|=(q^3+1)q^3(q-1)a$, where $a\mid2e+1$.
Since $G$ is block-transitive and $b=2\binom{v}{5}/\binom{k}{5}$, cancellation of the common factors gives
\begin{equation}\tag{3.15}\label{eq3.15}
2(q^2+q+1)(q^3-2)(q^3-3)|G_B|
=
a k(k-1)(k-2)(k-3)(k-4).
\end{equation}

Since $k\mid q^3+1$, we have
$\gcd(q^2+q+1,k)=1$,
$\gcd(q^3-2,k)\mid3$ and
$\gcd(q^3-3,k)\mid4$.
Indeed, $\gcd(q^2+q+1,q^3+1)\mid2$, and $q^2+q+1$ is odd.
It follows from equation~\eqref{eq3.15} that
$(q^2+q+1)(q^3-2)(q^3-3)$ divides
$12a(k-1)(k-2)(k-3)(k-4)$.
By Corollary~\ref{cor2.3},$k<\sqrt{2(q^3-3)}+4$.
Since $a\leq2e+1=\log_3q$, we obtain
\begin{equation*}
(q^2+q+1)(q^3-2)(q^3-3)
<
12\log_3q\bigl(\sqrt{2(q^3-3)}+4\bigr)^4.
\end{equation*}

However, $q\geq27$.
For every such $q$, we have
$q^3-2>(26/27)q^3$,
$q^3-3>(26/27)q^3$ and
$\sqrt{2(q^3-3)}+4<(3/2)q^{3/2}$.
Consequently, the left-hand side of the preceding inequality is greater than
$(676/729)q^8$, whereas its right-hand side is less than
$(243/4)q^6\log_3q$.
Thus we would have
$(676/729)q^2<(243/4)\log_3q$.
This inequality already fails for $q=27$, and the quotient
$q^2/\log_3q$ is increasing for $q\geq27$.
Therefore, no admissible value of $q$ exists, and the case
$N=\mathrm{Re}(q)$ is impossible.
\end{proof}

\begin{lemma}\label{lem:almost-B6}
Case {\rm(B6)} is impossible.
\end{lemma}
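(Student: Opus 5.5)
Case (B6) has $N=\Sp(2d,2)$, $d\geq3$, $v=2^{2d-1}\pm2^{d-1}=2^{d-1}(2^d\pm1)$, and $G=N$, since $\Out(\Sp(2d,2))=1$ for $d\geq3$. So $|G|=2^{d^2}\prod_{i=1}^{d}(2^{2i}-1)$, and the point stabilizer is $O^{\pm}(2d,2)$. I will follow the pattern of Lemmas~\ref{lem:almost-B3} and~\ref{lem:almost-B5}: combine $b=2\binom{v}{5}/\binom{k}{5}$ with $b\mid|G|$ (Lemma~\ref{lem2.16}) and the bound $k<\sqrt{2v}+4$ from Corollary~\ref{cor2.3}. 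The goal is to show that $b$ carries prime divisors that cannot divide $|G|$, or that $b>|G|$ fails to be compatible otherwise.

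\textbf{Step 1: an odd prime factor of $b$ outside $|G|$.} Write
\[
b=\frac{2v(v-1)(v-2)(v-3)(v-4)}{k(k-1)(k-2)(k-3)(k-4)}.
\]
Since $k\mid v$, the factor $v/k$ is an integer. Each of $v-1,v-3$ is odd and has size about $2^{2d-1}$. The factors $v-2=2(2^{d-2}(2^d\pm1)-1)$ and $v-4=4(2^{d-3}(2^d\pm1)-1)$ for $d\geq4$ contribute large odd parts as well. The denominator's part $(k-1)\cdots(k-4)$ is at most about $(2v)^2$, which is only comparable to two of the four factors. I will therefore argue that a large odd part of the product $(v-1)(v-2)(v-3)(v-4)$ survives in $b$. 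Its odd part has size roughly $v^4/(k^4\cdot 2^{O(1)})\gtrsim v^2$ and must divide the odd part of $|G|$, namely $\prod(2^{2i}-1)$.

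The cleaner route is the gcd trick used in Lemma~\ref{lem:almost-B3}. Because $k\mid v$, we have $\gcd(v-i,k)\mid i$. It follows that
\[
(v-1)(v-2)(v-3)(v-4)\mid 2^{a}\cdot 3\cdot(k-1)(k-2)(k-3)(k-4)\cdot|G_B|\cdot\text{(small)}.
\]
This gives no contradiction by itself, because $|G_B|$ is unknown. So instead I will use a primitive prime divisor argument. Consider $v-1=2^{2d-1}\pm2^{d-1}-1$. For $+$ this is $(2^d-1)(2^{d-1}+1)$, and for $-$ it is $(2^d+1)(2^{d-1}-1)$. Both factorizations divide $|G|$, so $v-1$ does not help. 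The same check is needed for $v-3$ and for the odd parts of $v-2$ and $v-4$. If none of these yields an obstruction, I fall back on a size argument.

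\textbf{Step 2: size/2-part argument.} Since $v$ is a multiple of $2^{d-1}$, the $2$-part of $k$ is at most $2^{d-1}$. The equation
\[
2v(v-1)\cdots(v-4)\,|G_B|=|G|\,k(k-1)\cdots(k-4)
\]
constrains $k$. Alternatively, I will apply Lemma~\ref{lem2.17} with a large subgroup fixing five points. Take a nondegenerate subspace $W$ of small dimension containing the vectors/forms that correspond to five chosen points. Its pointwise stabilizer contains $\Sp(W^\perp)$, whose orbits on the remaining points have length about $2^{2(d-c)}$, and this exceeds $2k\approx 2^{d+1}$ once $d$ is moderately large. Then, as in Lemma~\ref{lem3.3}, both blocks through these five points lie in a bounded "span" set. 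Block-transitivity together with $G$ preserving this span notion then contradicts the existence of five points with large span. Here "points" are quadratic forms polarizing to the symplectic form, an affine space over $V$, so span is well defined.

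\textbf{Step 3: small $d$.} The remaining small cases, e.g.\ $d\in\{3,4,5\}$ with $v\in\{28,36,120,136,496,528\}$, are handled by direct checks. I list the divisors $k$ of $v$ with $5<k<\sqrt{2v}+4$ and $\gcd(k,6)>1$ (Lemma~\ref{lem2.18}), test integrality of $\lambda_4,\lambda_3,\lambda_2,b$, and test $b\mid|\Sp(2d,2)|$. For example, $v=28$ forces $k\in\{7\}$, which is excluded by Lemma~\ref{lem2.18}, while $v=36$ gives $k\in\{6,9,12\}$ with $\lambda_4=64/(k-4)$, leaving only $k=6,12$, which must then be checked against $\lambda_3$ and $b$. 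The main obstacle is Step 2: identifying precisely the orbits of the pointwise stabilizer on the $O^\pm$-cosets, and making the "span" invariant rigorous. If that proves awkward, I will first try to push the arithmetic of Step 1 with Zsigmondy primes for $2^{2d-1}\pm2^{d-1}-3$.
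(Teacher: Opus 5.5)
\textbf{There is a genuine gap: the general-$d$ argument in Step~2 never reaches a contradiction.} Your Step~3 is fine. It is exactly the paper's check for $3\le d\le5$: only $(v,k)=(36,6),(528,6),(528,8)$ survive, and $b\nmid|\Sp(2d,2)|$ kills them. Step~1, however, gives no general argument. You have no control over the prime divisors of $v-2$, $v-3$, $v-4$, and size is useless because $b<v^5<2^{10d}$ while $|G|>2^{2d^2}$. So everything rests on Step~2. Its idea, trapping the two blocks through a five-set via Lemma~\ref{lem2.17}, is the right one, but as sketched it fails for two reasons.

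First, the subgroup is wrong. The points are quadratic forms, and an element of $\Sp(W^\perp)$ acting trivially on $W$ fixes the base form $Q$ only if it preserves $Q|_{W^\perp}$. Otherwise it maps $Q$ to $Q_c$ with $0\neq c\in W^\perp$, hence $Q_a$ to $Q_{a+c}$, so it moves your five points. You need $\mathrm{O}(W^\perp,Q|_{W^\perp})$ instead. This is repairable, but the orbit lengths drop. You should then use $k\ge5+\ell$ for an orbit of length $\ell$ of the block stabilizer lying inside $B_i$, rather than $\ell>2k$; otherwise $d=6$ falls between Steps~2 and~3.

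Second, and decisively, the span contradiction has no force. The points of $\mathcal P$ in the affine subspace $\{Q_x:x\in W\}$ are the $Q_x$ with $Q(x)=0$. A non-degenerate plane over $\mathbb F_2$ has at most three singular vectors, so $W$ must have dimension at least $4$ to carry five points. But any five points span an affine subspace of dimension at most $4$, so ``every block lies in an affine $4$-space of forms'' excludes no five-set.

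The missing idea is a way to turn ``both blocks through $S$ lie in a small invariant set'' into a contradiction. The paper does it as follows.
\begin{itemize}
\item It takes $U$ non-degenerate of dimension $4$ and minus type, which has exactly six singular vectors, and sets $S=\Sigma_U\setminus\{Q\}$.
\item It uses the setwise stabilizer $H=\mathrm{O}^-(U)\times\mathrm{O}^\delta(U^\perp)\le G_Q$ of $S$. Its orbits on $\mathcal P\setminus\Sigma_U$ have lengths $s_\delta$, $5s_\delta$, $10n_\delta$ with $s_\delta$ odd.
\item Hence every orbit of $H_{B_i}$ (index at most $2$) on $\mathcal P\setminus\Sigma_U$ has length at least $s_\delta$, and $k\ge5+s_\delta$ contradicts $k<2^d+5$ for $d\ge6$.
\item Thus $B_1,B_2\subseteq\Sigma_U$, which forces $k=6$ and $B_1=B_2=\Sigma_U$, contradicting simplicity.
\end{itemize}

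If you want to keep the span argument instead, choose a degenerate $3$-space carrying at least five singular vectors. For instance, a hyperbolic plane plus a singular vector orthogonal to it has six. Then blocks are trapped in affine $3$-spaces, while five points of affine dimension $4$ exist. You would still have to redo the orbit analysis for the pointwise stabilizer of that $3$-space in $\mathrm{O}(V,Q)$.
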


\begin{proof}
Since $|\Out(N)|=1$ for $N=\Sp(2d,2)$ with $d\geq3$
\cite{KleidmanLiebeck}, we have $G=N=\Sp(2d,2)$.

We first dispose of the cases $3\leq d\leq5$. By
Lemma~\ref{lem2.1}, Corollary~\ref{cor2.3}, and
Lemma~\ref{lem2.18}, the conditions $k\mid v$,
$\lambda_s\in\mathbb Z$ for $0\leq s\leq4$,
$(k-3)(k-4)\leq2(v-4)$, and $\gcd(k,6)>1$ leave only the
following possibilities:
\begin{center}
\begin{tabular}{c|c|c}
\hline
$d$ & $v$ & admissible values of $k$ \\
\hline
$3$ & $28$ & $\varnothing$ \\
$3$ & $36$ & $6$ \\
$4$ & $120$ & $\varnothing$ \\
$4$ & $136$ & $\varnothing$ \\
$5$ & $496$ & $\varnothing$ \\
$5$ & $528$ & $6,8$ \\
\hline
\end{tabular}
\end{center}
For $(v,k)=(36,6)$, Lemma~\ref{lem2.1} gives $b=125664$.
Since $125664\nmid1451520=|\Sp(6,2)|$, this contradicts
Lemma~\ref{lem2.16}.

It remains to consider $v=528$ when $d=5$. For $k=6$ and $8$, the
corresponding numbers of blocks are $111845462960$ and
$11983442460$, respectively, and both are divisible by $131$.
However,
\begin{equation*}
|\Sp(10,2)|
 =2^{25}\prod_{j=1}^{5}(2^{2j}-1)
 =2^{25}\cdot3^6\cdot5^2\cdot7\cdot11\cdot17\cdot31,
\end{equation*}
which is not divisible by $131$. Again this contradicts
Lemma~\ref{lem2.16}. Hence we may assume that $d\geq6$.

Let $(V,\beta)$ be the natural $2d$-dimensional symplectic space
over $\mathbb F_2$. We use the standard quadratic-form model for
the Jordan--Steiner actions of $G$ \cite{Bamberg}.
For $\epsilon\in\{1,-1\}$, let $\mathcal Q^\epsilon$ denote the
set of quadratic forms of sign $\epsilon$ polarizing to $\beta$.
We identify $\mathcal P$ with $\mathcal Q^\epsilon$, where
$|\mathcal Q^\epsilon|=2^{2d-1}+\epsilon2^{d-1}=v$.

Fix $Q\in\mathcal Q^\epsilon$, and, for $a\in V$, define
$Q_a(u)=Q(u)+\beta(u,a)$ for $u\in V$. The map $a\mapsto Q_a$ is
a bijection from $V$ to the set of all quadratic forms polarizing
to $\beta$, and $Q_a\in\mathcal Q^\epsilon$ if and only if
$Q(a)=0$. Thus
$\mathcal Q^\epsilon=\{Q_a\mid Q(a)=0\}$ and $Q_0=Q$.
Moreover, $G_Q\cong\mathrm O^\epsilon(2d,2)$, and its action on
$\mathcal Q^\epsilon$ is equivalent to its natural action on the
$Q$-singular vectors\cite{Bamberg}.

By the Witt decomposition, there exists a non-degenerate
$4$-dimensional subspace $U$ such that $Q_U$ is of minus type
\cite{KleidmanLiebeck}. Put $W=U^\perp$. Then
$V=U\perp W$, and the additivity of the Arf invariant shows that
$Q_W$ has sign $\delta=-\epsilon$. Write $\dim W=2m$, so that
$m=d-2\geq4$.

A $4$-dimensional quadratic space of minus type over $\mathbb F_2$
has six singular vectors, including the zero vector
\cite{Bamberg}. Therefore
\begin{equation*}
\Sigma_U=\{Q_a\mid a\in U,\ Q(a)=0\},
\qquad
S=\Sigma_U\setminus\{Q\}
\end{equation*}
satisfy $|\Sigma_U|=6$ and $|S|=5$. Set
$
H=\mathrm O^-(U,Q_U)\times
  \mathrm O^\delta(W,Q_W)\leq G_Q.
$
Then $H$ stabilizes $S$ setwise. Let $B_1$ and $B_2$ be the two
blocks containing $S$. By Lemma~\ref{lem2.17}, $H$ permutes
$B_1$ and $B_2$, and hence
$[H:H_{B_i}]\leq2$ for $i=1,2$.

We next determine the relevant $H$-orbits. By Witt's lemma,
$\mathrm O^-(U,Q_U)$ is transitive on both the five nonzero
singular vectors and the ten nonsingular vectors of $U$.
Similarly, $\mathrm O^\delta(W,Q_W)$ is transitive on the
nonzero singular vectors and on the nonsingular vectors of $W$,
whose respective numbers are
\begin{equation*}
s_\delta=2^{2m-1}+\delta2^{m-1}-1,
\qquad
n_\delta=2^{2m-1}-\delta2^{m-1}.
\end{equation*}
Every vector outside $U$ has a unique expression $u+w$, where
$u\in U$ and $0\neq w\in W$, and it is $Q$-singular precisely
when $Q_U(u)=Q_W(w)$. It follows that the $H$-orbits on the
$Q$-singular vectors outside $U$ have lengths
$s_\delta$, $5s_\delta$, and $10n_\delta$.

Fix $i\in\{1,2\}$ and put $K=H_{B_i}$. If $K=H$, then every
$H$-orbit remains a single $K$-orbit. Otherwise,
$[H:K]=2$, so $K\lhd H$, and every $H$-orbit is either a single
$K$-orbit or the union of two $K$-orbits of equal length. Since
$s_\delta$ is odd, the $H$-orbits of lengths $s_\delta$ and
$5s_\delta$ cannot split. An $H$-orbit of length $10n_\delta$
either remains a single $K$-orbit or splits into two $K$-orbits
of length $5n_\delta$. Since
\begin{equation*}
5n_\delta-s_\delta
=4\cdot2^{2m-1}-6\delta2^{m-1}+1>0,
\end{equation*}
it follows in either case that every $K$-orbit on
$\mathcal Q^\epsilon\setminus\Sigma_U$ has length at least
$s_\delta$.

Suppose that $B_i$ contains a point outside $\Sigma_U$. Since
$K$ stabilizes $B_i$, the corresponding $K$-orbit is contained
in $B_i\setminus S$. Consequently,
\begin{equation*}
k\geq5+s_\delta
 \geq2^{2d-5}-2^{d-3}+4,
\qquad
k<\sqrt{2v}+4<2^d+5,
\end{equation*}
where the second inequality follows from
Corollary~\ref{cor2.3} and
$2v=2^{2d}+\epsilon2^d<(2^d+1)^2$. For $d\geq6$, however,
\begin{equation*}
2^{2d-5}-2^{d-3}+4-(2^d+5)
 =2^{d-3}(2^{d-2}-9)-1>0,
\end{equation*}
which is a contradiction. Thus $B_i\subseteq\Sigma_U$ for
$i=1,2$.

Finally, $S\subseteq B_i\subseteq\Sigma_U$, where $|S|=5$ and
$|\Sigma_U|=6$. Since the design is non-trivial, $k>5$, and hence
$k=6$ and $B_1=B_2=\Sigma_U$. This contradicts the simplicity of
$\mathcal D$, since the two blocks containing the $5$-subset $S$
must be distinct. Therefore Case~{\rm(B6)} is impossible.
\end{proof}

\begin{lemma}\label{lem:almost-B7}
Case {\rm(B7)} is impossible.
\end{lemma}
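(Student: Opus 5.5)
Case (B7) splits into two subcases, $(N,v)=(\P\S\L(2,11),11)$ and $(N,v)=(\P\S\L(2,8),28)$. In both we first use $k\mid v$ together with $5<k<v$, and then apply Lemma~\ref{lem2.18} and the integrality conditions of Lemma~\ref{lem2.1}. Whatever survives this arithmetic will be excluded by the divisibility $b\mid|G|$ from Lemma~\ref{lem2.16}.

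For $v=11$: since $11$ is prime, there is no divisor $k$ of $v$ with $5<k<11$. This subcase is therefore immediate.

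For $v=28$: the divisors of $28$ in the open interval $(5,28)$ are $7$ and $14$. The value $k=7$ is coprime to $6$, so Lemma~\ref{lem2.18} excludes it; we can also see directly that $\lambda_4=2\cdot24/3=16$, but $\lambda_3=2\cdot25\cdot24/(4\cdot3)=100$ and $\lambda_2=2\cdot26\cdot25\cdot24/(5\cdot4\cdot3)=520$, so integrality alone does not kill it, and Lemma~\ref{lem2.18} is the tool to use. For $k=14$ we get $\lambda_4=2\cdot24/10=24/5$, which is not an integer, and this contradicts Lemma~\ref{lem2.1}(iv). Corollary~\ref{cor2.3} also excludes $k=14$, since $(k-3)(k-4)=110>48=2(v-4)$.

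To be safe for $k=7$, we also check $b=2\binom{28}{5}/\binom{7}{5}=2\cdot98280/21=9360$. Here $G\leq\Aut(\P\S\L(2,8))=\P\Gamma\L(2,8)$, whose order is $1512$. Since $b>|G|$, Lemma~\ref{lem2.16} gives a contradiction as well. No real obstacle is expected; the only care needed is to run the arithmetic correctly for each candidate $k$.
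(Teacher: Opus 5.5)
Your proof is correct and follows the paper's argument: $v=11$ admits no divisor $k$ with $5<k<11$, $k=7$ is excluded by Lemma~\ref{lem2.18}, and $k=14$ fails because $\lambda_4=24/5\notin\mathbb Z$. Your extra check for $k=7$, namely $b=9360>1512=|\P\Gamma\L(2,8)|$, is a useful backup: it makes this case self-contained rather than dependent on the flag-transitive classification that underlies Lemma~\ref{lem2.18}.
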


\begin{proof}
Suppose first that $N=\P\S\L(2,11)$ and $v=11$.
Since $k\mid11$ and $5<k<11$, there is no admissible value of $k$.

Suppose next that $N=\P\S\L(2,8)$ and $v=28$. The conditions $k\mid28$ and $5<k<28$ give $k\in\{7,14\}$. By Lemma~\ref{lem2.18}, the case $k=7$ is impossible. If $k=14$, then $k-4=10\nmid48=2(v-4)$, contrary to Lemma~\ref{lem2.1}(iv). Therefore, this case cannot occur.
\end{proof}

\begin{lemma}\label{lem:almost-B8}
Case {\rm(B8)} is impossible.
\end{lemma}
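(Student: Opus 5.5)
We need to rule out $N=\M_v$ with $v\in\{11,12,22,23,24\}$. We proceed by parameter arithmetic on each degree, using $k\mid v$, $5<k<v$, the integrality of $\lambda_4=2(v-4)/(k-4)$ and of the other $\lambda_s$ from Lemma~\ref{lem2.1}, Corollary~\ref{cor2.3}, and Lemma~\ref{lem2.18}. Whatever survives is then tested against $b\mid|G|$ (Lemma~\ref{lem2.16}), where $|G|$ divides $|\Aut(N)|$, and $|\Out(N)|\le 2$.

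For $v=11$ and $v=23$, $v$ is prime, so no divisor $k$ with $5<k<v$ exists. For $v=12$ the divisors are $k=6$; Corollary~\ref{cor2.3} gives $(k-3)(k-4)=6\le16$. Here $\lambda_4=16/2=8$, $\lambda_3=2\cdot9\cdot8/(3\cdot2)=24$ and $b=2\binom{12}{5}/\binom65=264$. Since $|\M_{12}|=95040=264\cdot360$, divisibility does not kill this case, so the orbit structure has to be used. The group $\M_{12}$ is sharply $5$-transitive, so it is transitive on $5$-subsets. A block stabilizer of order $360$ or $720$ in $\M_{12}$ (or $\M_{12}{:}2$, although the outer automorphism does not preserve the point action, so $G=\M_{12}$) would be a subgroup of index $264$. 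The subgroups of index $264$ are not stabilizers of $6$-sets: every $6$-subset with stabilizer of order at least $360$ is a hexad of the Steiner system $S(5,6,12)$, and these form one orbit of length $132$. A $6$-set whose $5$-subsets all lie in exactly two blocks of a single $G$-orbit of size $264$ would therefore require a $6$-set stabilizer of order $360$. The maximal subgroups of $\M_{12}$ of index at most $264$ are $\M_{11}$ (two classes), $\A_6.2^2$ (hexad stabilizer, index $132$), $\L_2(11)$, and so on. A subgroup of index exactly $264$ lies in a hexad stabilizer $\A_6.2^2$ with index $2$, or in $\M_{11}$ as a subgroup of index $2$, which does not exist. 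An index-$2$ subgroup of the hexad stabilizer fixes the same hexad, so it is not the full stabilizer. Hence no $6$-set has stabilizer of order $360$, a contradiction. This is the step I expect to be most delicate. If it becomes awkward, I would instead verify directly, as in Lemma~\ref{lem3.12}, that no $\M_{12}$-orbit of $6$-sets of length $264$ exists.

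For $v=22$, the candidates are $k\in\{11\}$ (excluded by Lemma~\ref{lem2.18}, since $\gcd(11,6)=1$). For $v=24$, the candidates are $k\in\{6,8,12\}$. For these we check $\lambda_4=40/(k-4)$: $k=6$ gives $20$ and $k=8$ gives $10$, both integers, while $k=12$ gives $5$. For $k=6$, $\lambda_3=2\cdot21\cdot20/(3\cdot2)=140$ and $\lambda_2=2\cdot22\cdot21\cdot20/24=770$ are integers, and $b=2\binom{24}{5}/6=14168$. Since $14168=2^3\cdot7\cdot11\cdot23$ divides $|\M_{24}|$, I would use the $5$-transitivity of $\M_{24}$ together with Lemma~\ref{lem2.17}. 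The pointwise stabilizer of $5$ points is $\A_4\times$ with orbits lying in the octad through them ($3$ further points), the rest being of length $\ge 16$. Hence both blocks through $S$ lie inside the octad, giving $k\le 8$. Block-transitivity then forces every block to be inside an octad, and this is only consistent if all $5$-subsets of a block lie in a single octad. For $k=6$, counting gives blocks as $6$-subsets of octads, with $759\binom86=21252$ such sets in one orbit, which is not equal to $b$, a contradiction. For $k=8$, the blocks are octads, $b=1518$, but octads form a single orbit of size $759$, a contradiction. For $k=12$, $b=2\binom{24}{5}/\binom{12}{5}=107.33\ldots$ is not an integer. This completes the case.
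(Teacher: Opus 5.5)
\textbf{Verdict.} Your cases $v=11,22,23$ match the paper and your $v=24$ argument is valid, but the $v=12$ case has a genuine gap.

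\textbf{The gap at $v=12$.} Your justification that no $6$-set has stabilizer of order $360$ rests on incorrect subgroup facts.
\begin{itemize}
\item $\A_6.2^2$ is not the hexad stabilizer. It has order $1440$ and index $66$ in $\M_{12}$, and stabilizes a duad or a pair of complementary hexads. The hexad stabilizer is $S_6$, of index $132$, and it is not maximal.
\item A subgroup of order $360$ has index $4$ in $\A_6.2^2$ and index $22$ in $\M_{11}$, not index $2$.
\item $\M_{11}$ does contain such subgroups: the derived group $\A_6$ of the two-point stabilizer $\M_{10}$, with orbit lengths $1,1,10$. Your argument never considers this class.
\end{itemize}
So the assertion that every $6$-set with stabilizer of order at least $360$ is a hexad is true but unproved, and the fallback of verifying directly is not a substitute.

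\textbf{How to repair it.} Either of the following works.
\begin{itemize}
\item Quote, as the paper does, that $\M_{12}$ has exactly two orbits on $6$-subsets, of lengths $132$ and $792$. Then no $\M_{12}$-invariant family of $6$-subsets has size $264$.
\item Reuse your own $v=24$ idea. Let $H\cong S_5$ be the setwise stabilizer of a $5$-set $S$. It fixes the sixth point $x$ of the hexad through $S$. The hexad stabilizer $S_6$ acts on the complementary hexad through an outer automorphism, so $H$ and its subgroup $A_5$ are transitive there. By Lemma~\ref{lem2.17}, $H_{B_i}$ has index at most $2$ in $H$, so it contains $A_5$. It fixes the unique point of $B_i\setminus S$, which must therefore be $x$. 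Hence $B_1=B_2=S\cup\{x\}$, contradicting simplicity.
\end{itemize}

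\textbf{Your $v=24$ route.} It differs from the paper's. The paper cites Choi's orbit lengths of $\M_{24}$ on $6$- and $8$-subsets and notes that $14168$ and $1518$ are not sums of them. You instead use Lemma~\ref{lem2.17} to trap both blocks through a $5$-set $S$ inside the octad containing $S$. This is more self-contained, needing only the octad stabilizer $2^4{:}\A_8$, but needs two corrections:
\begin{itemize}
\item The pointwise stabilizer $H$ of $S$ is $2^4{:}3$, of order $48$, with orbits of lengths $3$ and $16$ off $S$.
\item Lemma~\ref{lem2.17} only gives $[H:H_{B_i}]\le 2$, so the orbit of length $16$ may split into two $H_{B_i}$-orbits of length $8$. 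This still forces $B_i$ into the octad, since $5+8>8\ge k$.
\end{itemize}
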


\begin{proof}
Since \(N\lhd G\), every \(G\)-orbit on the \(k\)-subsets of
\(\mathcal P\) is a union of \(N\)-orbits. We shall use this
observation below.

If $v=11$ or $23$, the conditions $k\mid v$ and $5<k<v$ admit no possible value of $k$. If $v=22$, then $k=11$, contrary to Lemma~\ref{lem2.18}. Hence these three degrees are excluded.

Suppose that \(v=12\). Then \(k=6\), and Lemma~\ref{lem2.1}(ii)
gives \(b=2\binom{12}{5}/\binom{6}{5}=264.\)
The natural action of \(M_{12}\) on the \(6\)-subsets of its
\(12\)-point set has precisely two orbits, of lengths \(132\) and
\(792\) \cite{BaileyBray}. Since \(\mathcal B\) is a
\(G\)-orbit and \(N=M_{12}\lhd G\), it must be a union of these
\(N\)-orbits. No union of them has size \(264\), contrary to
\(|\mathcal B|=b\).

Finally, suppose that \(v=24\). The conditions \(k\mid24\) and
\(5<k<24\) give \(k\in\{6,8,12\}\). For \(k=12\),
Lemma~\ref{lem2.1}(ii) gives \(b=2\binom{24}{5}/\binom{12}{5}=322/3,\)
which is not an integer. For \(k=6\) and \(8\), the corresponding
numbers of blocks are \(14168\) and \(1518\), respectively.

By Choi's classification of the subset orbits of \(\M_{24}\)
\cite{Choi}, its orbit lengths on \(6\)-subsets are \(21252\) and
\(113344\), while its orbit lengths on \(8\)-subsets are
\(759\), \(97152\), and \(637560\). No union of the \(6\)-subset
orbits has size \(14168\), and no union of the \(8\)-subset orbits
has size \(1518\). Since \(N=M_{24}\lhd G\), neither value can be
the length of a \(G\)-orbit. This contradicts block-transitivity.

Therefore Case~{\rm(B8)} is impossible.
\end{proof}

\begin{lemma}\label{lem:almost-B9}
Case {\rm(B9)} is impossible.
\end{lemma}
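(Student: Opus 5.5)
We treat the four possibilities listed in (B9) one at a time. As in Cases (B7) and (B8), the argument uses only the arithmetic conditions: $k\mid v$ with $5<k<v$, Lemma~\ref{lem2.18}, the integrality of $\lambda_4=2(v-4)/(k-4)$ and the other $\lambda_s$ from Lemma~\ref{lem2.1}(iv), the bound of Corollary~\ref{cor2.3}, and the divisibility $b\mid|G|$ of Lemma~\ref{lem2.16}. Only where these fail to decide a case will we look at orbit lengths.

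\textbf{Case $N=\M_{11}$, $v=12$.} Here $k=6$, so $b=2\binom{12}{5}/\binom{6}{5}=264$. Since $\Out(\M_{11})=1$, we have $G=\M_{11}$, of order $7920=2^4\cdot3^2\cdot5\cdot11$. Because $264=2^3\cdot3\cdot11$ divides $7920$, divisibility alone does not settle this case, and it is the one we expect to be the main obstacle. Our plan is to show that $G_B$ has order $30$ and then rule out such a subgroup. $\M_{11}$ acting $3$-transitively on $12$ points has no subgroup of order $30$: its maximal subgroups are $\M_{10}$, $\mathrm{L}_2(11)$, $\M_9{:}2$, $\S_5$ and $2\cdot\S_4$, and none of these contains a subgroup of order $30$. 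In particular $\S_5$ has no subgroup of order $30$, since such a subgroup would have index $4$. This contradicts block-transitivity. As a backup, we would note that the Sylow $5$-normalizer in $\M_{11}$ has order $20$, and that a group of order $30$ has a normal Sylow $5$-subgroup, which gives the same contradiction.

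\textbf{Case $N=\A_7$, $v=15$.} Here $k\in\{3,5\}$, so no $k$ satisfies $5<k<15$, and this case is excluded at once.

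\textbf{Case $N=\HS$, $v=176$.} Here $|G|\le 2|\HS|=2^{10}\cdot3^2\cdot5^3\cdot7\cdot11$. The divisors of $176$ with $5<k<176$ are $8,11,16,22,44,88$. Corollary~\ref{cor2.3} gives $k<\sqrt{352}+4<23$, which leaves $k\in\{8,11,16,22\}$. Lemma~\ref{lem2.18} removes $k=11$. The condition $k-4\mid 2(v-4)=344=2^3\cdot43$ then removes $k=16$ (since $12\nmid344$) and $k=22$ (since $18\nmid344$). For $k=8$, $4\mid344$ holds, so we compute $b=2\binom{176}{5}/\binom{8}{5}$. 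This number has the factor $173$ from $v-3$, and $173$ is a prime not dividing $|G|$, which contradicts Lemma~\ref{lem2.16}. We would also check $\lambda_3$ for integrality as an extra test.

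\textbf{Case $N=\Co_3$, $v=276$.} Here $G=\Co_3$ and $|G|=2^{10}\cdot3^7\cdot5^3\cdot7\cdot11\cdot23$. Corollary~\ref{cor2.3} gives $k<\sqrt{552}+4<28$, so $k\in\{6,12,23\}$. Lemma~\ref{lem2.18} removes $k=23$. Now $2(v-4)=544=2^5\cdot17$; this excludes $k=12$ since $8\mid544$ holds but $\lambda_3=2\cdot273\cdot272/(9\cdot8)$ is not an integer. For $k=6$, $2\mid544$, and $b=2\binom{276}{5}/6$ contains the prime factors $17$ (from $272$) and $137$ (from $274=2\cdot137$). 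Neither divides $|\Co_3|$, contradicting Lemma~\ref{lem2.16}.

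In each of the four cases we reach a contradiction, so Case (B9) is impossible.
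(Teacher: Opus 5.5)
Your proof is correct. For $\A_7$, $\HS$ and $\Co_3$ you follow the same arithmetic lines as the paper, but with different witnesses:
\begin{itemize}
\item For $\HS$ you use the prime $173\mid b$ against $|\Aut(\HS)|=2^{10}\cdot3^2\cdot5^3\cdot7\cdot11$. The paper instead observes $\lambda_3=14878/5\notin\mathbb Z$, which needs no group order at all.
\item For $(v,k)=(276,12)$ you use $\lambda_3\notin\mathbb Z$, where the paper uses $b\notin\mathbb Z$.
\item For $(276,6)$ you use the primes $17$ and $137$, where the paper uses $13$.
\end{itemize}

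The genuine difference is the $\M_{11}$ subcase. The paper quotes the orbit lengths $22,110,792$ of $\M_{11}$ on the $6$-subsets of its $12$ points and notes that no union of them has size $b=264$. That argument uses only $N\lhd G$, but it depends on an external orbit computation. You instead use $\Out(\M_{11})=1$ to get $G=\M_{11}$ and $|G_B|=7920/264=30$, and then show that $\M_{11}$ has no subgroup of order $30$. This needs only standard ATLAS facts.

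As written, your maximal-subgroup sweep is incomplete. You justify only the $\S_5$ case, yet $\M_{10}$ and $\P\S\L(2,11)$ also have order divisible by $30$. The argument that actually carries the weight is your backup: a group of order $30$ has a normal Sylow $5$-subgroup $P$, so it would lie in $N_{\M_{11}}(P)\cong 5{:}4$, which has order $20$. Alternatively, every group of order $30$ contains an element of order $15$, and $\M_{11}$ has none, since the centralizer of an element of order $5$ has order $5$. With that in place, your treatment of this subcase is complete and arguably more self-contained than the paper's.
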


\begin{proof}
Suppose first that $N=\M_{11}$ and $v=12$. The conditions $k\mid v$ and $5<k<v$ give $k=6$, and Lemma~\ref{lem2.1}(ii) gives $b={2\binom{12}{5}}/{\binom{6}{5}}=264$.
In its transitive action of degree $12$, the group $\M_{11}$ has three orbits on the $6$-subsets, of lengths $22$, $110$, and $792$ \cite{Crnkovic}. 
Since $N\lhd G$ and $\mathcal B$ is a $G$-orbit, $\mathcal B$ must be a union of these $N$-orbits.
However, $22+110=132<264<792$, so no such union has size $264$.
This is a contradiction.

If $N=\A_7$ and $v=15$, there is no integer $k$ satisfying
$k\mid15$ and $5<k<15$.

Suppose next that $N=\HS$ and $v=176$. Using $k\mid176$,
$5<k<176$, and
$\lambda_4=344/(k-4)\in\mathbb Z$, Lemma~\ref{lem2.1}(iv)
leaves only $k=8$. For this value, however,
\begin{equation*}
\lambda_3
 =2\frac{\binom{173}{2}}{\binom{5}{2}}
 =\frac{14878}{5}\notin\mathbb Z,
\end{equation*}
again contrary to Lemma~\ref{lem2.1}(iv).

Finally, let $N=\Co_3$ and $v=276$. The conditions $k\mid276$,
$5<k<276$, and $\lambda_4=544/(k-4)\in\mathbb Z$ leave
$k\in\{6,12\}$. If $k=12$, then Lemma~\ref{lem2.1}(ii) gives
$b=97491940/3$, which is not an integer. Hence $k=6$, and $
b={2\binom{276}{5}}/{\binom{6}{5}}
  =4289645360.$
By \cite{Atlas}, $\Out(\Co_3)=1$, and hence $G=N=\Co_3$, with\(
|\Co_3|
 =2^{10}\cdot3^7\cdot5^3\cdot7\cdot11\cdot23
 =495766656000.\)
Since $13\mid b$ but $13\nmid|G|$, we have $b\nmid|G|$,
contrary to Lemma~\ref{lem2.16}. Therefore Case~{\rm(B9)} is
impossible.
\end{proof}

The preceding lemmas exclude all cases other than the two
parameter--group triples obtained in Lemma~\ref{lem3.8}.
By \cite[Theorem~1]{WeiLi} and
Lemma~\ref{lem3.12}, respectively, each of these two triples
determines a unique design up to isomorphism.
Consequently, the only non-trivial simple block-transitive
$5$-$(v,k,2)$ designs satisfying $k\mid v$ are the
$5$-$(12,6,2)$ design admitting $\P\G\L(2,11)$ as a
block-transitive automorphism group and the
$5$-$(24,8,2)$ design admitting $\P\G\L(2,23)$ as a
block-transitive automorphism group.
This completes the proof of Theorem~\ref{th1.1}.

\end{document}